\documentclass[a4paper,12pt,twoside]{amsart} 
\setlength{\textwidth}{15.9cm}
\setlength{\textheight}{22.9cm}
\setlength{\oddsidemargin}{-0.5cm}
\setlength{\evensidemargin}{-0.5cm}
\setlength{\topmargin}{-0.45cm}
\setlength{\hoffset}{ 0.5cm}

\usepackage{amsmath,amsthm}
\usepackage{amsfonts}
\usepackage{amssymb}
\usepackage[english]{babel}
\usepackage{xcolor}

\newtheorem{theo}{Theorem}[section]
\newtheorem{lem}[theo]{Lemma}
\newtheorem{prop}[theo]{Proposition}
\newtheorem{cor}[theo]{Corollary}

\newcommand{\TT}{{\mathbb T}}
\newcommand{\e}{{\text{e}}}
\newcommand{\C}{{\mathbb C}}

\newcommand{\N}{{\mathbb N}}

\newcommand{\R}{{\mathbb R}}

\newcommand{\pac}{K_{\mbox{\tiny \rm $p$-ac}}}
\newcommand{\cs}{K_{\mbox{\tiny \rm cs}}}
\newcommand{\ac}{K_{\mbox{\tiny \rm ac}}}

\newcommand{\sk}{K_{\mbox{\tiny \rm sk}}}
\newcommand{\kb}{K_{\mbox{\tiny \rm k}}}

\theoremstyle{remark}


\begin{document}

\baselineskip=16pt

\title{Resolvent conditions and growth of powers of operators}

\author{Guy Cohen}
\address{Department of Electrical 
Engineering, Ben-Gurion University, Beer-Sheva, Israel}
\email{guycohen@bgu.ac.il}

\author{Christophe Cuny}
\address{UMR CNRS 6205, Laboratoire de Math\'ematiques de Bretagne Atlantique, Universite 
de Brest, Brest, France}
\email{christophe.cuny@univ-brest.fr}

\author{Tanja Eisner}
\address{Institute of Mathematics, University of Leipzig, Germany}
\email{tatjana.eisner@math.uni-leipzig.de}

\author{Michael Lin}
\address{Department of Mathematics, Ben-Gurion University, Beer-Sheva, Israel}
\email{lin@math.bgu.ac.il}

\subjclass[2010]{Primary: 47A10, 47A35}
\keywords{Kreiss resolvent condition, power-boundedness, mean ergodicity, Ces\`aro 
boundedness, Abel boundedness}

\begin{abstract}
Following Berm\'udez et al. \cite{BBMP}, we study the rate of growth of the norms of the 
powers of a linear operator, under various resolvent conditions or 
Ces\`aro boundedness assumptions. 
In Hilbert spaces, we prove that if $T$ 
satisfies the Kreiss condition, $\|T^n\|=O(n/\sqrt {\log n})$; if $T$ is absolutely Ces\`aro 
bounded, $\|T^n\|=O(n^{1/2 -\varepsilon})$ for some $\varepsilon >0$ (which depends on $T$);
if $T$ is strongly Kreiss bounded, then $\|T^n\|=O((\log n)^\kappa)$ for some $\kappa >0$.
We show that a Kreiss bounded operator on a reflexive space is Abel ergodic,
and its Ces\`aro means of order $\alpha$ converge strongly when $\alpha >1$.
\end{abstract}
\maketitle
\vspace*{-0.6 cm}

\section{Introduction}

\subsection{Background}

The mean ergodic theorem, proved by Yosida and by Kakutani, asserts the convergence in 
norm of the averages $\frac1n \sum_{k=1}^n T^k x$ of a weakly almost periodic 
operator $T$ on a Banach space $X$.  When $T$ is power-bounded, the convergence 
$\frac1n\sum_{k=1}^n T^k x \to y$ is equivalent to the Abel convergence 
$\lim_{r \to 1^-} (1-r)\sum_{n=0}^\infty r^n T^nx =y$.

An example of Hille \cite{Hi} (in $L^1$) shows that power-boundedness is not necessary 
for mean ergodicity. Mean ergodicity easily implies that $\|T^n\|=O(n)$.
Derriennic \cite{De} gave an example of $T$ mean ergodic in a Hilbert space with 
$T^*$ not mean ergodic (so $T$ is not power-bounded, and $\limsup n^{-1}\|T^n\| >0$); 
see also \cite[Example 3.1]{TZ}.
A mean ergodic $T$ in $L^1$ with $\limsup n^{-1}\|T^n\| >0$ was constructed by Kosek
\cite{Ko}.
\smallskip

The purpose of this paper is to study the connections between different resolvent conditions
and Ces\`aro boundedness conditions, and the growth properties of $\|T^n\|$. Our work
continues and complements that of Berm\'udez et al. \cite{BBMP}. For an overview of the 
results see Subsection \ref{overview} below.

\subsection{The Kreiss resolvent condition}

Kreiss \cite{Kr} presented the following resolvent condition ({\it Kreiss resolvent condition})
\begin{equation} \label{KRC}
\|R(\lambda,T)\| \le \frac C{|\lambda| -1} \qquad |\lambda| > 1 \ .
\end{equation}
We shall denote by $\kb=\kb(T)$ the smallest constant $C>0$ for which \eqref{KRC} holds.
Kreiss proved that in finite-dimensional spaces (\ref{KRC}) implies power-boundedness. 
Lubich and Nevanlinna \cite{LN} proved that (\ref{KRC}) implies $\|T^n\|=O(n)$;
this is the best estimate \cite{Sh}, \cite[Theorem 6]{Ne}. 
Earlier, Kreiss gave a resolvent condition  for the generator of a $C_0$-semigroup, 
inspired by the Hille-Yosida theorem, which in finite-dimensional spaces yields boundedness 
of the semigroup; however, in contrast to \cite{LN}, Eisner and Zwart \cite{EZ} constructed 
a $C_0$-semigroup with {\it exponential} growth whose generator satisfies Kreiss's condition.
\smallskip

McCarthy \cite{McC} gave an example of $T$ invertible on $\ell^2(\mathbb Z)$ which 
satisfies the stronger condition ({\it strong Kreiss resolvent condition}, sometimes called
{\it iterated Kreiss condition}):
\begin{equation} \label{SKR}
\|R^k(\lambda,T)\| \le \frac C{(|\lambda| -1)^k} \qquad |\lambda| > 1, \quad k=1,2, \dots
\end{equation}
but is not power-bounded; in the example also $T^{-1}$ satisfies (\ref{SKR}).
Condition (\ref{SKR}) implies that $\|T^n\|=O(\sqrt n)$ \cite{McC}, \cite{LN}. This
estimate is the best possible in general Banach spaces \cite[p. 298]{LN}.
Lyubich \cite{Ly2} obtained a family of examples in $L^p[0,1]$ satisfying (\ref{KRC}) 
but not (\ref{SKR}). 
Nevanlinna  \cite[Theorem 2]{Ne} (see also \cite[Proposition 1.1]{Ne2}) proved that 
$T$ satisfies 
 (\ref{SKR}) if and only if for some $M$ we have
\begin{equation} \label{SKR2}
\|\e^{zT}\| \le M \e^{|z|} \qquad \forall z \in \mathbb C.
\end{equation}
 We shall denote by $\sk$ the smallest constant $C>0$ such that \eqref{SKR} holds.
\smallskip

A. Montes-Rodr\'\i guez et al. \cite{MSZ} defined the 
{\it uniform Kreiss resolvent condition}  by
\begin{equation} \label{UKR}
\sup_{n\ge 1} \|\sum_{k=0}^n \frac{T^k}{\lambda^{k+1}}\| \le \frac C{|\lambda|-1} \qquad
|\lambda| > 1. 
\end{equation}
They showed that (\ref{UKR}) does not imply (\ref{SKR}), and proved that 
(\ref{UKR}) holds if and only if there exists $C>0$ such that
\begin{equation} \label{rotated}
\sup_n \| \frac1n \sum_{k=1}^n (\lambda T)^k \| \le C \qquad \forall |\lambda|=1.
\end{equation}
The proof that (\ref{UKR}) implies (\ref{KRC}) is immediate. 
Gomilko and Zem\'anek \cite{GZ} proved that (\ref{SKR}) implies (\ref{UKR}), hence
(\ref{rotated}); thus in reflexive spaces (\ref{SKR}) implies mean ergodicity, since 
$\|T^n\|=O(\sqrt n)$.  If $T$ is power-bounded, then (\ref{SKR}) 
holds (in an equivalent norm $T$ is a contraction and $C=1$ in (\ref{KRC})). 
By Strikwerda and Wade \cite[p. 352]{SW},  (\ref{KRC}) does not imply (\ref{rotated}). 
Berm\'udez et al. \cite{BBMP} proved that if $T$ on a Hilbert space satisfies
(\ref{UKR}), then $\|T^n\|=o(n)$, and then $T$ is mean ergodic.
In Section 5 we prove that a positive Ces\`aro bounded operator on a complex Banach lattice
is uniformly Kreiss bounded.

Van Casteren \cite{VC1} proved that if $T$ is power-bounded invertible  on $H$ with 
$\sigma(T) \subset \mathbb T$, and $T^{-1}$ satisfies (\ref{KRC}) (which is equivalent 
to condition (ii) in van Casteren's theorem), then also $T^{-1}$ is power-bounded (see also
\cite{Na}).  This extended results of \cite{Don}, \cite{GK}, \cite{St}. 
\smallskip

Following \cite{BBMP}, we may refer to $T$ which satisfies the (strong, uniform) Kreiss 
resolvent condition  as {\it (strongly, uniformly) Kreiss bounded} (abbreviated as SKB or UKB
respectively).
\smallskip

\subsection{Ces\`aro boundedness conditions}

The mean ergodic theorem implies that $T$ is {\it Ces\`aro bounded}, i.e. 
$\sup_n \|\frac1n \sum_{k=1}^n T^k\| < \infty$. However, for mean ergodicity we require 
also that $T^n/n$ converge strongly to zero. By an old (two-dimensional) example of 
Assani \cite{As} (first presented in \cite{Em0}), there is $T_0$ Ces\`aro bounded for which 
$T_0^n/n$ does not converge to zero even weakly. 
Using this example, we construct $T$ on $\ell^2(\mathbb N)$
which is the identity on the space generated by $\{e_3,\dots,e_k,\dots\}$ and $T_0$ on
the span of $e_1,e_2$. Then $T$ is Ces\`aro bounded not power-bounded.
Since $T_0$ is not power-bounded, by the Kreiss matrix theorem it does not satisfy the 
Kreiss resolvent condition, hence neither does $T$.

Hou and Luo \cite{HL} introduced the notion of 
{\it absolute Ces\`aro boundedness} (ACB): there exists $C>0$ such that 
\begin{equation} \label{ACB}
\sup_n\frac1n \sum_{k=0}^{n-1} \|T^k x\| \le C\|x\| \qquad \forall \  x\in X.
\end{equation}
We shall denote by $\ac$ the smallest constant for which \eqref{ACB} holds.
Berm\'udez et al. \cite{BBMP} proved that (\ref{ACB}) implies $\|T^n\|/n \to 0$; hence in
reflexive spaces ACB implies mean ergodicity.
Absolute Ces\`aro boundedness implies uniform Kreiss boundedness, by the characterization 
(\ref{rotated}).  Berm\'udez et al. \cite{BBMP} constructed a Hilbert space (mean ergodic)
operator satisfying (\ref{UKR}) which is not absolutely Ces\`aro bounded.

Van Casteren \cite{VC2},\cite{VC3} introduced the following condition: $T$ is called 
{\it Ces\`aro square bounded} if there exists $C>0$ such that
\begin{equation} \label{CSB}
\sup_n\frac1n \sum_{k=0}^{n-1} \|T^k x\|^2 \le C\|x\|^2 \qquad \forall \  x\in X.
\end{equation}

Van Casteren \cite{VC2} proved that if {\it both} $T$ and $T^*$ are Ces\`aro square 
bounded in $H$, then $T$ is power-bounded, and gave an example in $\ell^2(\mathbb Z)$ 
of $T$ not power-bounded satisfying (\ref{CSB}). Zwart \cite{Zw} gave a simpler proof 
of power-boundedness, in any Banach space, when $T$ and $T^*$ both satisfy (\ref{CSB}).
In (a) $\Leftrightarrow$ (d) of \cite[Theorem 2.3]{CS}, 
Chen and Shaw extended Zwart's result;
however, since for {\it positive} sequences Ces\`aro boundedness and Abel boundedness are 
equivalent (e.g. \cite[1.5-1.7]{Em}), the use of ``Abel square boundedness"  in \cite{CS}  
is not more general.

Since (\ref{CSB}) implies $\|T^n\|= O(\sqrt n)$, Theorem 2.1 of \cite{BBMP}, with 
$\frac12 < \alpha< \frac1p$, gives  examples (in $\ell^p$, $1<p<2$) of absolutely 
Ces\`aro bounded operators which are not  Ces\`aro square bounded.
\smallskip

\subsection{Overview} \label{overview}

We briefly describe the main results in the paper. We are of course interested in operators
which are not power-bounded; either  the Kreiss condition or Ces\`aro boundedness imply that
(in the complex case) the spectral radius is at most 1. In Section 2 we 
derive  for $T$ invertible some results similar to \cite[Theorem 2.2]{BBMP}.
In Section 3 we define $p$-absolute Ces\`aro boundedness, 
which extends absolute Ces\`aro boundedness ($p=1$) and Ces\`aro square boundedness ($p=2$). 
We prove in this case that $\|T^n\| =O(n^{1/p -\varepsilon})$ for some $\varepsilon$ 
(which depends on $T$), and provide an example. In Section 4 we obtain growth rates of 
$\|T^n\|$ in Hilbert space: if $T$ satisfies the Kreiss condition, $\|T^n\|=O(n/\sqrt{\log  n})$; 
if $T$ is absolutely Ces\`aro bounded, $\|T^n\|=O(n^{1/2 -\varepsilon})$ for some 
$\varepsilon >0$ (which depends on $T$); if $T$ is strongly Kreiss bounded, then 
$\|T^n\|=O((\log n)^\kappa)$ for some $\kappa >0$.
We show that strong Kreiss boundedness and absolute Ces\`aro boundedness are independent
(none implies the other). In Section 5 we study the ergodic properties of Kreiss bounded operators.
We show that in reflexive spaces Kreiss boundedness implies Abel ergodicity and
strong convergence of Ces\`aro means of order $\alpha$ when $\alpha >1$.  
For positive operators on reflexive complex Banach lattices, Kreiss boundedness 
implies mean ergodicity.  In Section 6 we list some problems which arise from our work.

\medskip

\section{Ces\`aro boundedness conditions and power-boundedness}

In this section we study conditions for power-boundedness. If $T$ is absolutely Ces\`aro 
bounded with $K_{ac}=1$, then $n=2$ in (\ref{ACB}) yields that $T$ is a contraction.

Obviously, if $T$ is Ces\`aro bounded on $X$, so is $T^*$ on $X^*$. Since there are
absolutely Ces\`aro bounded operators which are not power-bounded \cite[Theorem 2.1]{BBMP},
the next proposition shows that their duals are not absolutely Ces\`aro bounded.
We proved it independetly of \cite[Theorem 2.2]{BBMP}, where it was first proved, so
we leave its statement for the reader's convenience (but omit our proof, since it  is 
similar to that of \cite[Theorem 2.2]{BBMP}).

\begin{prop} \label{zwart}
Let $T$ be a linear operator on a (real or complex) Banach space $X$. If both $T$ and $T^*$
are absolutely Ces\`aro bounded, then $T$ is power-bounded.
\end{prop}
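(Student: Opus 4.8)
The plan is to exploit a duality pairing between the two absolute Cesàro bounds to control $\|T^n x\|$ pointwise. Fix $x \in X$ and $\varphi \in X^*$ with $\|\varphi\| = 1$, and consider the "double average" $\frac{1}{n^2}\sum_{j=0}^{n-1}\sum_{k=0}^{n-1} \langle T^{j+k}x, \varphi\rangle$. The key identity is that this double sum can be rearranged by the substitution $m = j+k$: each value $m \in \{0,1,\dots,2n-2\}$ occurs with a triangular multiplicity $c_{n,m} = \#\{(j,k): 0\le j,k \le n-1,\ j+k=m\}$, which peaks at $m = n-1$ with value $n$. In particular the coefficient of $\langle T^{n-1}x,\varphi\rangle$ in $\sum_{j,k}\langle T^{j+k}x,\varphi\rangle$ is exactly $n$, so after suitable manipulation one isolates $\|T^{n-1}\|$.

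The main steps, in order, are as follows. First, estimate $\bigl|\frac{1}{n^2}\sum_{j=0}^{n-1}\sum_{k=0}^{n-1}\langle T^{j+k}x,\varphi\rangle\bigr|$ from above: writing it as $\frac{1}{n}\sum_{j=0}^{n-1}\bigl\langle T^j\bigl(\frac1n\sum_{k=0}^{n-1}T^k x\bigr),\varphi\bigr\rangle$ and applying \eqref{ACB} for $T$ to the inner vector $y_n := \frac1n\sum_{k=0}^{n-1}T^k x$, we get a bound $\le \ac\,\|y_n\| \le \ac^2\,\|x\|$ (using \eqref{ACB} once more to bound $\|y_n\|$). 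Second — and symmetrically — rewrite the same double sum as $\frac1n\sum_{k=0}^{n-1}\bigl\langle T^k x,\, (T^*)^{\cdots}\text{-average of }\varphi\bigr\rangle$ so that absolute Cesàro boundedness of $T^*$ contributes; more precisely one wants a single bound of the form $\le \ac(T)\,\ac(T^*)\,\|x\|\,\|\varphi\|$ that uses both hypotheses at once. Third, obtain a lower bound: split the triangular sum $\sum_{m=0}^{2n-2} c_{n,m}\langle T^m x,\varphi\rangle$ into the single dominant term $n\langle T^{n-1}x,\varphi\rangle$ plus the rest, and control the rest using that $\sum_{m\ne n-1} c_{n,m} \le n^2$ together with the growth estimate $\|T^m\| = o(m)$ available for absolutely Cesàro bounded operators by \cite{BBMP} (this already gives $\|T^m\|/m \to 0$, hence by Cesàro summation the averages of $\|T^m x\|$ that appear are small relative to $n$). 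Fourth, combine upper and lower bounds, take the supremum over $\varphi$ to recover $\|T^{n-1}x\|$, then over $x$ to conclude $\sup_n \|T^n\| < \infty$.

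The hard part will be making the lower-bound step quantitative: the naive triangular-weight argument gives $n\,\langle T^{n-1}x,\varphi\rangle = n^2\cdot(\text{double average}) - \sum_{m\ne n-1} c_{n,m}\langle T^m x,\varphi\rangle$, and the error term $\sum_{m\ne n-1} c_{n,m}\|T^m\|$ is of size up to $n^2 \cdot o(n) = o(n^3)$, which after dividing by $n$ is $o(n^2)$ — far too weak. One must instead exploit cancellation or a smarter weighting (e.g. use Fejér-type kernels, or iterate the averaging so that the weights $c_{n,m}/n^2$ form an approximate identity concentrated near $m\approx n$ whose "tails" are genuinely summable against $\|T^m x\|$), so that the correction involves $\frac1n\sum_m \|T^m x\|$-type quantities that \eqref{ACB} bounds by $\ac\|x\|$ rather than by $\|T^m\|$. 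Since the proof is stated to parallel \cite[Theorem 2.2]{BBMP}, I expect the actual argument to package exactly this: apply \eqref{ACB} for $T$ and for $T^*$ to the two halves of a cleverly chosen bilinear average whose diagonal weight is bounded below by a constant multiple of $n$, yielding $\|T^{n-1}\| \le C\,\ac(T)\,\ac(T^*)$ directly, with no reliance on the $o(n)$ growth rate at all.
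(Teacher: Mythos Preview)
Your proposal circles the right territory but misses the key identity that makes the argument work cleanly. The double sum $\sum_{j,k}\langle T^{j+k}x,\varphi\rangle$ is the wrong object: after the substitution $m=j+k$ you are trying to isolate $\langle T^{n-1}x,\varphi\rangle$ from a genuine linear combination of \emph{different} quantities $\langle T^m x,\varphi\rangle$, and as you yourself observe, the error terms are hopelessly large. No Fej\'er-type smoothing will rescue this, because the absolute Ces\`aro bounds control $\frac1n\sum_m \|T^m x\|$, not the individual $\|T^m x\|$, and the triangular weights $c_{n,m}$ are not concentrated enough to convert the error into such an average.

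The actual argument (Zwart's idea, visible in the paper's proof of Theorem~\ref{invertible}) rests on the trivial but decisive observation that
\[
\langle x^*, T^N x\rangle \;=\; \langle T^{*k}x^*,\, T^{N-k}x\rangle \qquad \text{for every } 0\le k\le N,
\]
so all $N+1$ of these pairings are \emph{equal}. There is nothing to isolate and no error term: one writes
\[
(N+1)\,|\langle x^*, T^N x\rangle|^{1/2} \;=\; \sum_{k=0}^{N} |\langle T^{*k}x^*,\, T^{N-k}x\rangle|^{1/2}
\;\le\; \sum_{k=0}^{N} \|T^{*k}x^*\|^{1/2}\,\|T^{N-k}x\|^{1/2},
\]
applies Cauchy--Schwarz to split the last sum into $\big(\sum_k\|T^{*k}x^*\|\big)^{1/2}\big(\sum_j\|T^{j}x\|\big)^{1/2}$, and then invokes \eqref{ACB} for $T$ and for $T^*$ separately. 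This gives $|\langle x^*,T^N x\rangle|\le \ac(T)\,\ac(T^*)\,\|x\|\,\|x^*\|$ directly, hence $\|T^N\|\le \ac(T)\,\ac(T^*)$ for all $N$. The entire proof is three lines; there is no lower-bound step, no appeal to $\|T^m\|=o(m)$, and no approximate identity.
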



\smallskip

The following theorem answers Question 2.2 of \cite{BBMP} (and improves Corollary 2.4 there).

\begin{theo} \label{noACB}
There exists an invertible operator  $T$ on $\ell^2(\mathbb Z)$ satisfying the 
strong Kreiss resolvent condition which is not absolutely Ces\`aro bounded.
\end{theo}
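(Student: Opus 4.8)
The plan is to produce the operator as a weighted bilateral shift on $\ell^2(\mathbb Z)$, in the spirit of McCarthy's example \cite{McC}, designed so that its powers exhibit \emph{transient amplification} at a sequence of scales tending to infinity; this amplification will be kept slow enough to preserve \eqref{SKR} but strong enough to violate \eqref{ACB}. Concretely, I would take $Te_n=w_ne_{n+1}$ with weights $w_n>0$ equal to $1$ outside a sparse family of pairwise disjoint blocks $B_m=\{a_m,\dots,a_m+\ell_m-1\}$, $\ell_m\to\infty$, $a_{m+1}\gg a_m+\ell_m$. On the first half of $B_m$ the $w_i$ are chosen slightly above $1$ so that the partial products $p_m(k):=\prod_{i=a_m}^{a_m+k-1}w_i$ increase from $1$ to some $G_m$, and on the second half slightly below $1$ so that $p_m$ returns to $1$ at $k=\ell_m$; the growth is kept very slow, say $p_m(k)\asymp(\log(2+k))^{\kappa}$ for $0\le k\le \ell_m/2$ (so $G_m\asymp(\log\ell_m)^{\kappa}\to\infty$), which is consistent with the Hilbert-space growth bound $\|T^n\|=O((\log n)^{\kappa})$ for strongly Kreiss bounded operators. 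Since $0<\inf_n w_n\le\sup_n w_n<\infty$, $T$ is boundedly invertible (and a symmetric choice of weights on the negative half-line makes $T^{-1}$ strongly Kreiss bounded too, although the statement only asks for invertibility).

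To check \eqref{SKR} I would use Nevanlinna's characterization \eqref{SKR2}: the coefficient of $e_{j+k}$ in $e^{zT}e_j$ is $\tfrac{z^k}{k!}\prod_{i=j}^{j+k-1}w_i$, and every length-$k$ product of weights is at most $C(\log(2+k))^{\kappa}$ uniformly in the starting index $j$; one then bounds $\|e^{zT}\|$ by $Me^{|z|}$ by decomposing the matrix according to the blocks $B_m$, noting that each bump contributes only a bounded multiplicative factor and that the slow, polylogarithmic amplification is absorbed by the coefficients $\tfrac{|z|^k}{k!}$. The essential point is that the crude estimate obtained by writing $T=DVD^{-1}$ ($V$ the unitary bilateral shift, $D$ the diagonal of partial products) is useless here, because $\|D\|=\sup_m G_m=\infty$; one has to estimate $\|e^{zT}\|$ directly, and calibrating $\kappa$ and the lengths $\ell_m$ so that \eqref{SKR} really holds is the main obstacle.

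Granting the construction, non-ACB is immediate. For each $m$ take the unit vector $x_m=e_{a_m}$ and $n_m=\ell_m$; then $\|T^kx_m\|=p_m(k)$, which is $\asymp(\log(2+k))^{\kappa}$ on $0\le k\le \ell_m/2$ and, by the symmetric choice of weights, still comparable to $(\log(2+\ell_m-k))^{\kappa}$ on $\ell_m/2\le k\le \ell_m$, so
\[
\frac1{n_m}\sum_{k=0}^{n_m-1}\|T^kx_m\|\ \ge\ \frac{c}{\ell_m}\sum_{k=0}^{\lfloor\ell_m/2\rfloor}(\log(2+k))^{\kappa}\ \ge\ c'(\log\ell_m)^{\kappa}\ \xrightarrow[m\to\infty]{}\ \infty .
\]
Hence \eqref{ACB} fails for every constant $C$, so $T$ is not absolutely Ces\`aro bounded, while $T$ is invertible and strongly Kreiss bounded.

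An alternative, perhaps shorter, route is to start from McCarthy's operator \cite{McC} itself (already invertible and strongly Kreiss bounded on $\ell^2(\mathbb Z)$) and extract from its weight pattern the transient-amplification phenomenon used above, i.e.\ exhibit unit vectors $x_m$ and times $n_m$ along which $\frac1{n_m}\sum_{k<n_m}\|T^kx_m\|\to\infty$; the work then moves from verifying \eqref{SKR} to locating the amplifying directions inside McCarthy's example.
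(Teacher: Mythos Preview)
Your approach is genuinely different from the paper's, and much harder. The paper gives a two-line indirect argument: suppose every invertible strongly Kreiss bounded operator on $\ell^2(\mathbb Z)$ were absolutely Ces\`aro bounded. Since $R(\lambda,T^*)=R(\bar\lambda,T)^*$, the adjoint of any such operator is again invertible and strongly Kreiss bounded, hence also ACB by hypothesis; but then Proposition~\ref{zwart} (if $T$ and $T^*$ are both ACB then $T$ is power-bounded) forces McCarthy's operator \cite{McC} to be power-bounded, a contradiction. Thus either McCarthy's $T$ or its adjoint $T^*$ is the desired example --- the argument is non-constructive and does not tell you which, but it is complete and requires no analysis of McCarthy's weights.

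Your constructive route would, if carried out, yield an explicit witness, which is more informative. But as you yourself flag, the verification of \eqref{SKR2} is the whole difficulty, and ``the slow, polylogarithmic amplification is absorbed by the coefficients $|z|^k/k!$'' is a hope rather than an estimate. A cautionary point: Proposition~\ref{sk-log} later in the paper constructs strongly Kreiss bounded shifts with exactly the growth $\|T^n\|\asymp(\log n)^\kappa$, and by Proposition~\ref{ask-acb} those operators \emph{are} absolutely Ces\`aro bounded. So polylogarithmic growth of $\|T^n\|$ is not by itself enough to separate SKB from ACB; your block structure (the amplification being transient and localised at scales $\ell_m$) must be doing essential work, and that is precisely what has to be made rigorous in the \eqref{SKR2} estimate. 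Your ``alternative route'' of mining McCarthy's weights directly is closer in spirit to the paper's proof, but the duality trick via Proposition~\ref{zwart} bypasses even that analysis.
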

\begin{proof} Assume that every invertible $T$ on $H=\ell^2(\mathbb Z)$  satisfying (\ref{SKR}) 
is absolutely Ces\`aro bounded. Since $R(\lambda, T^*)=R(\lambda,T)^*$ for 
$\lambda \notin \sigma(T)$, $T^*$ satisfies (\ref{SKR}) whenever $T$ does. Thus 
if $T$ is invertible and satisfies (\ref{SKR}), so does $T^*$, and 
our assumption yields that  $T$ and $T^*$ are both absolutely Ces\`aro bounded.
Hence such $T$ is power-bounded by Proposition \ref{zwart}. 
But McCarthy's example \cite{McC} is an invertible operator $T$ on $H$ which 
satisfies the strong Kreiss resolvent condition and is {\it not} power-bounded -- 
a contradiction to our assumption.
\end{proof}

{\bf Remark.} The construction of Proposition \ref{sk-log} yields examples of $T$
on $L^p$ which are strongly Kreiss and absolutely Ces\`aro bounded (see Proposition
\ref{ask-acb}), but not power-bounded. Hence $T^*$ is strongly Kreiss, but by
\cite[Theorem 2.2]{BBMP} (Proposition \ref{zwart}) it is not absolutely Ces\`aro bounded.
\medskip

The ideas of Zwart \cite{Zw}  yield the following result. 

\begin{theo} \label{invertible}
Let $T$ be an invertible operator on a (real or complex) Banach space $X$. 
Then the following are equivalent:

(i) Both $T$ and $T^{-1}$ are power-bounded ($T$ is then called 
\emph{doubly power-bounded}).

(ii) Both $T$ and $(T^{-1})^*$ are absolutely Ces\`aro bounded. 

(iii) Both $T^*$ and $T^{-1}$ are absolutely Ces\`aro bounded. 
\end{theo}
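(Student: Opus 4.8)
The plan is to prove the chain of implications $(i) \Rightarrow (ii) \Rightarrow (iii) \Rightarrow (i)$, using Proposition \ref{zwart} as the engine for the step that recovers power-boundedness. First observe that $(i) \Rightarrow (ii)$ and $(i) \Rightarrow (iii)$ are immediate: power-boundedness of $S$ trivially implies absolute Ces\`aro boundedness of $S$ (with $\ac(S) \le \sup_n \|S^n\|$), and power-boundedness of $T^{-1}$ is equivalent to power-boundedness of $(T^{-1})^*$ on $X^*$, so both $(ii)$ and $(iii)$ follow. The substantive content is therefore the converse directions, and here the idea of Zwart enters: the ``adjoint-type'' pairing $\{T, (T^{-1})^*\}$ (respectively $\{T^*, T^{-1}\}$) is arranged precisely so that one of the two operators is the adjoint, on the appropriate space, of the \emph{inverse} of the other.

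For $(ii) \Rightarrow (i)$: set $S = T^{-1}$. Then $(T^{-1})^* = S^*$ is absolutely Ces\`aro bounded on $X^*$ by hypothesis, hence so is its predual-level companion; more to the point, I want to apply Proposition \ref{zwart} to the operator $S = T^{-1}$. Note that $T = S^{-1}$ is absolutely Ces\`aro bounded (this is the first hypothesis in $(ii)$), i.e. $(S^{-1})$ is ACB, and $S^* = (T^{-1})^*$ is ACB (the second hypothesis). The point is that $S^{-1}$ being ACB is exactly the condition that lets one control $\|S^n x\|$ from below: absolute Ces\`aro boundedness of $S^{-1}$ gives, for each $x$, that $\frac1n\sum_{k=0}^{n-1}\|S^{-k} y\| \le \ac(S^{-1})\|y\|$, and substituting $y = S^{n-1}x$ and keeping the last term ($k = n-1$) yields $\|x\| \le \ac(S^{-1}) \cdot \frac{n}{?}$-type comparisons between $\|x\|$ and the Ces\`aro average of $\|S^j x\|$. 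Pairing this lower bound for $T = S^{-1}$ with the upper Ces\`aro bound coming from ACB of $S^* = (T^{-1})^*$ (rewritten as a statement about $S$ acting on a predual norm, via duality of the averages $\frac1n\sum S^k$), one gets a uniform two-sided estimate forcing $\sup_n\|S^n\| < \infty$, i.e. $T^{-1}$ power-bounded; and then $T$ itself is ACB with $T$ and $T^*$ both controllable, so Proposition \ref{zwart} applies to give $T$ power-bounded. The cleanest route is probably to verify directly that the hypotheses of Proposition \ref{zwart} (with $T$ and $T^*$ both ACB) follow from $(ii)$, by showing that ACB of $(T^{-1})^*$ together with ACB of $T$ implies ACB of $T^*$ via the same Zwart-style duality bridge.

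The implication $(iii) \Rightarrow (i)$ is the mirror image: apply the argument to $T^*$ in place of $T$, using that $(T^*)^{-1} = (T^{-1})^*$, so that $(iii)$ says $T^*$ and $((T^*)^{-1})$ are ACB — wait, $(iii)$ says $T^*$ and $T^{-1}$ are ACB, and $T^{-1}$ is ACB iff (one direction at least) $(T^{-1})^* = (T^*)^{-1}$ is ACB on the bidual; so $(iii)$ reduces to case $(ii)$ applied to $T^*$, giving $T^*$ doubly power-bounded, hence $T$ doubly power-bounded. One must be slightly careful with the bidual and the fact that ACB passes to the adjoint but the converse needs $X$ reflexive or a predual argument — this is the kind of point Zwart's formulation handles by working with the averages directly rather than passing through bidual embeddings.

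The main obstacle I expect is the duality bookkeeping in the step that converts ``$(T^{-1})^*$ is ACB'' into usable information about $T$ (or $T^{-1}$) acting on $X$ itself: absolute Ces\`aro boundedness is a statement about the scalar sequence $\|T^k x\|$, which does \emph{not} dualize as cleanly as the operator-norm Ces\`aro bound $\|\frac1n\sum T^k\|$ does. One cannot simply say $\|\frac1n\sum_{k<n}(T^{-1})^{*k}\| = \|\frac1n\sum_{k<n}T^{-k}\|$ and be done, because ACB is strictly stronger than operator Ces\`aro boundedness. The resolution — following Zwart — is to pair the lower estimate ``$\|x\| \le \ac(S^{-1}) \sup_{j<n}$-ish combination of $\|S^j x\|$'' coming from ACB of the inverse against the upper Ces\`aro average bound, and to exploit that for a \emph{single fixed} $n$ one gets $\|S^{n-1}x\| \le n\,\ac(S^{-1})\|x\|$ crude bounds that, fed back through the ACB hypothesis on $S^*$ applied to functionals, telescope into boundedness. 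Making this telescoping precise without circularity, and checking it works in a non-reflexive Banach space (only assuming ACB, never power-boundedness, of the relevant operators), is where the real work lies; once that lemma is in hand, the three-way equivalence is a short formal deduction together with an invocation of Proposition \ref{zwart}.
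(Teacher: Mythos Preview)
Your proposal is not a proof; it is a strategy sketch that, as you yourself concede in the last paragraph, has not located the key computation. The attempt to reduce $(ii)\Rightarrow(i)$ to Proposition~\ref{zwart} is circular: Proposition~\ref{zwart} needs $T$ and $T^*$ \emph{both} absolutely Ces\`aro bounded, whereas $(ii)$ gives you $T$ and $(T^{-1})^*$. You propose to bridge the gap by showing that ACB of $(T^{-1})^*$ together with ACB of $T$ forces ACB of $T^*$ --- but if that implication held, then taking $T$ ACB and not power-bounded (such operators exist, see \cite[Theorem 2.1]{BBMP}) and combining with the trivial fact that $I^*$ is ACB would \emph{not} help; more directly, any such ``ACB of $T$ implies ACB of $T^*$'' mechanism would collapse Proposition~\ref{zwart} into the false statement that ACB implies power-boundedness. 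The same problem infects your $(iii)\Rightarrow(i)$ argument: reducing to case $(ii)$ for $T^*$ requires $(T^{-1})^{**}$ to be ACB on $X^{**}$, which does not follow from $T^{-1}$ ACB on $X$ without reflexivity. Your opening chain $(i)\Rightarrow(ii)\Rightarrow(iii)\Rightarrow(i)$ is also infeasible as stated, since $(ii)\Rightarrow(iii)$ directly would again require ACB to pass to adjoints.

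The missing idea is concrete and short. For $(ii)\Rightarrow(i)$, set $S=T^{-1}$ and use the identity
\[
\langle x^*, T^N x\rangle \;=\; \langle S^{*k} x^*,\, T^{N+k} x\rangle \qquad (0\le k\le N),
\]
which holds because $S^{*k}=(T^*)^{-k}$. Summing the $N+1$ identical copies of $|\langle x^*,T^Nx\rangle|^{1/2}$ and applying Cauchy--Schwarz to the right-hand side separates the sum into $\bigl(\sum_{k=0}^N\|S^{*k}x^*\|\bigr)^{1/2}\bigl(\sum_{j=0}^{2N}\|T^jx\|\bigr)^{1/2}$, and now the two ACB hypotheses in $(ii)$ bound each factor by $C(N+1)\|x^*\|$ and $C'(2N+1)\|x\|$ respectively. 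Dividing by $(N+1)^2$ gives $|\langle x^*,T^Nx\rangle|\le 2\ac(S^*)\ac(T)\|x^*\|\,\|x\|$, hence $\sup_N\|T^N\|<\infty$. A symmetric computation with $\langle S^{*N}x^*,x\rangle = \langle S^{*(N+k)}x^*, T^k x\rangle$ bounds $\|T^{-N}\|$. No appeal to Proposition~\ref{zwart} is needed (indeed, Proposition~\ref{zwart} is the degenerate case $S=T$ of this same trick), and no bidual issue arises because one works with pairings $\langle x^*,\,\cdot\,\rangle$ throughout. The implication $(iii)\Rightarrow(i)$ is then genuinely the mirror image, interchanging the roles of $T$ and $T^*$ in the pairing.
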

\begin{proof} Obviously (i) implies both (ii) and (iii).

Assume (ii). Put $S=T^{-1}$. 
Fix $x \in X$ and $x^* \in X^*$. For $N \ge 1$ we have
$$
(N+1)|\langle x^*,T^Nx\rangle |^{1/2} = \sum_{k=0}^N |\langle S^{*k}x^*,T^{N+k}x \rangle |^{1/2} \le
\sum_{k=0}^N\|S^{*k}x^*\|^{1/2}\|T^{N+k}x\|^{1/2} \le
$$
$$
\Big(\sum_{k=0}^N\|S^{*k}x^*\|\Big)^{1/2} \Big(\sum_{k=0}^N\|T^{N+k}x\|\Big)^{1/2} \le
\Big(\sum_{k=0}^N\|S^{*k}x^*\|\Big)^{1/2} \Big(\sum_{j=0}^{2N}\|T^{j}x\|\Big)^{1/2} .
$$
Hence, with $\ac(T)$ and $\ac(S^*)$ denoting the corresponding  constants of (\ref{ACB}), we obtain
$$
|\langle x^*,T^Nx\rangle | \le 
\Big(\frac1{N+1}\sum_{k=0}^N\|S^{*k}x^*\|\Big) \Big(\frac1{N+1}\sum_{j=0}^{2N}\|T^kx\|\Big) \le
2\ac(S^*)\|x^*\| \cdot \ac(T) \|x\|.
$$
Since this is for every $x^* \in X^*$ and $x \in X$, we conclude that $\|T^N\| \le 2\ac(T)\ac(S^*)$.

To show that $T^{-1}$ is power-bounded, we write 
$$
(N+1)|\langle S^{*N}x^*,x\rangle |^{1/2} = \sum_{k=0}^N |\langle S^{*(N+k)}x^*,T^{k}x \rangle |^{1/2} 
\le \sum_{k=0}^N\|S^{*(N+k)}x^*\|^{1/2}\|T^{k}x\|^{1/2} 
$$
and obtain similarly that $ |\langle  S^{*N}x^*,x\rangle | \le 2\ac(S^*)\|x^*\| \cdot \ac(T) \|x\|$
for every $x^* \in X^*$ and $x \in X$, which yields $\|T^{-N}\| \le 2 \ac(S^*) \ac(T)\ $.
\smallskip

The proof that (iii) implies (i) is similar, so we omit it.
\end{proof}

\begin{prop} \label{isometry}
Let $T$ be absolutely Ces\`aro bounded, and assume that for some $c>0$ 
\begin{equation} \label{below}
\limsup_{n\to\infty} \frac1n\sum_{k=1}^n \|T^kx\| \ge c\|x\| \quad \forall x \in X.
\end{equation}
Then $T$ is power-bounded (by $K_{ac}/c$).
\end{prop}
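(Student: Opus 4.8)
The plan is to prove the sharp quantitative bound $\|T^m x\| \le (\ac/c)\,\|x\|$ for every integer $m \ge 0$ and every $x \in X$, from which power-boundedness by $\ac/c$ follows at once by taking the supremum over $m$. The one idea that makes this work is that applying the averaging hypothesis \eqref{ACB} to $x$ itself controls the single term $\|T^m x\|$ only with the vanishing weight $1/n$ and is therefore useless directly; instead one should feed the shifted vector $y := T^m x$ into the \emph{lower} bound \eqref{below}, and then control the resulting Ces\`aro averages of $\|T^{k+m}x\|$ from above using \eqref{ACB} for $x$.

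First I would fix $m \ge 0$ and $x \in X$, put $y = T^m x$, and invoke \eqref{below}, which gives
\[
c\,\|T^m x\| = c\,\|y\| \le \limsup_{n\to\infty}\frac1n\sum_{k=1}^n \|T^k y\|.
\]
Since $\|T^k y\| = \|T^{k+m}x\|$, reindexing and using only that the terms are nonnegative yields
\[
\frac1n\sum_{k=1}^n \|T^k y\| = \frac1n\sum_{j=m+1}^{m+n}\|T^j x\| \le \frac1n\sum_{j=0}^{m+n}\|T^j x\|.
\]
By \eqref{ACB} applied to $x$ with $m+n+1$ summands, $\sum_{j=0}^{m+n}\|T^j x\| \le (m+n+1)\,\ac\,\|x\|$, so $\frac1n\sum_{k=1}^n\|T^k y\| \le \frac{m+n+1}{n}\,\ac\,\|x\|$.

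Letting $n\to\infty$ with $m$ fixed, the prefactor $\frac{m+n+1}{n}$ converges to $1$, hence $\limsup_{n}\frac1n\sum_{k=1}^n\|T^k y\| \le \ac\,\|x\|$. Combining with the first display gives $c\,\|T^m x\| \le \ac\,\|x\|$, i.e. $\|T^m x\| \le (\ac/c)\,\|x\|$; taking $m = 0$ also records $c \le \ac$, so $\ac/c \ge 1$ and the statement is consistent. Since $m$ and $x$ are arbitrary, $\sup_{m\ge 0}\|T^m\| \le \ac/c < \infty$.

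I do not expect any serious obstacle: there is no delicate estimate, the summands are nonnegative throughout, and the argument is a two-line manipulation once the substitution $y = T^m x$ into \eqref{below} is made. The only point that needs a word of justification is that the $\limsup$ over $n$ behaves well under the shift of the summation window from $\{1,\dots,n\}$ to $\{m+1,\dots,m+n\}$ and under multiplication by $\frac{m+n+1}{n}$ — both harmless precisely because $m$ stays fixed while $n\to\infty$.
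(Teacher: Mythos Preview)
Your proof is correct and is essentially the same as the paper's: both apply the lower bound \eqref{below} to the shifted vector $T^m x$ (the paper's $T^N x$) and then control the resulting Ces\`aro average $\frac1n\sum_{k=1}^n\|T^{k+m}x\|$ from above by $\frac{m+n+1}{n}\ac\|x\|$ using \eqref{ACB}, letting $n\to\infty$ to get $\|T^m x\|\le(\ac/c)\|x\|$. The paper compresses this into a single chain of inequalities, but the idea and the bound obtained are identical.
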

\begin{proof}
Let $N\in \N$. By \eqref{below} and \eqref{ACB} with $C=K_{ac}$, we have 
\begin{gather*}
c\|T^Nx\|\le \limsup_{n\to \infty}\frac1n\sum_{k=1}^n\|T^{k+N}x\|\le 
\lim_{n\to +\infty}\frac{N+n}{n} \sup_{m\ge 1} \frac1{m+N}\sum_{k=1}^{m+N}\|T^{k}x\|\le 
K_{ac}\|x\|\, .
\end{gather*}
\end{proof}

{\bf Remark.} The assumptions of Proposition \ref{isometry} do not imply invertibility --
all isometries satisfy them.

\begin{theo} \label{unitary}
The following are equivalent for an invertible $T$ on a Banach space $X$:

(i) Both $T$ and $T^{-1}$ are power-bounded.

(ii) $T^{-1}$ is power-bounded and $T$ is absolutely Ces\`aro bounded.

(iii) $T$ is absolutely Ces\`aro bounded and satisfies (\ref{below}).
\end{theo}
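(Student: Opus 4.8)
The plan is to prove the cycle (i) $\Rightarrow$ (ii) $\Rightarrow$ (iii) $\Rightarrow$ (i), since each implication is either trivial or reduces to a result already available in the excerpt. The implication (i) $\Rightarrow$ (ii) is immediate: power-boundedness of $T$ trivially implies absolute Ces\`aro boundedness (with $\ac(T)\le\sup_n\|T^n\|$), and power-boundedness of $T^{-1}$ is just restated. For (ii) $\Rightarrow$ (iii), we still have $T$ absolutely Ces\`aro bounded by hypothesis, so it only remains to produce the lower bound \eqref{below}. Here the idea is that $T^{-1}$ power-bounded forces $T$ to be bounded below uniformly: for every $x$ and every $k\ge 1$, $\|x\|=\|T^{-k}T^kx\|\le M\|T^kx\|$ where $M=\sup_k\|T^{-k}\|$, so $\|T^kx\|\ge \|x\|/M$ for all $k$. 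Averaging, $\frac1n\sum_{k=1}^n\|T^kx\|\ge \|x\|/M$ for every $n$, hence the $\limsup$ in \eqref{below} is at least $\|x\|/M$; thus \eqref{below} holds with $c=1/M$.

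For (iii) $\Rightarrow$ (i), we invoke Proposition \ref{isometry} directly: its hypotheses are exactly that $T$ is absolutely Ces\`aro bounded and satisfies \eqref{below}, and its conclusion is that $T$ is power-bounded (by $\ac(T)/c$). Once $T$ is power-bounded, we need the power-boundedness of $T^{-1}$; the natural route is to apply Theorem \ref{invertible}, say implication (ii)$\Rightarrow$(i) there, which requires $T$ and $(T^{-1})^*$ to be absolutely Ces\`aro bounded. We already have $T$ power-bounded, hence absolutely Ces\`aro bounded; and $(T^{-1})^*=(T^*)^{-1}$, so it would suffice to know that $T^*$ is absolutely Ces\`aro bounded --- but actually, once $T$ is power-bounded, $T^*$ is power-bounded too, and the whole situation becomes symmetric once we establish invertibility with both inverses controlled. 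The cleanest formulation: from \eqref{below} with $T$ now power-bounded, one checks that $T$ is bounded below (if $\|T^kx\|\to 0$ along a subsequence the average cannot stay above $c\|x\|$, using power-boundedness to control the other terms), so $T$ has closed range; combined with invertibility this gives $T^{-1}$ bounded, and then a direct estimate $\|T^{-n}x\|$ via \eqref{below} applied to $T^{-n}x$, together with $\sup_j\|T^j\|<\infty$, yields $\sup_n\|T^{-n}\|<\infty$.

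I expect the main obstacle to be the last step, extracting power-boundedness of $T^{-1}$ from (iii): \eqref{below} is only a $\limsup$ statement, so one must be careful that the lower bound on the Ces\`aro averages, combined with the (now available) uniform bound $\sup_j\|T^jy\|\le K\|y\|$, genuinely forces $\inf_k\|T^kx\|\ge c'\|x\|$ for some $c'>0$ rather than merely infinitely often. The point is that if $\|T^{k_0}x\|<\delta\|x\|$ for some index $k_0$, then for all $k\ge k_0$ we have $\|T^kx\|=\|T^{k-k_0}T^{k_0}x\|\le K\delta\|x\|$, so a single small value propagates forward and drags the $\limsup$ of the averages below $c\|x\|$ once $\delta$ is small enough; hence $\inf_k\|T^kx\|\ge (c/K)\|x\|$, which makes $T$ bounded below, closed range, and (being invertible as a set map) boundedly invertible. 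After that, applying \eqref{below} to $T^{-n}x$ and using $\|T^{k}T^{-n}x\|\le K\|T^{-n}x\|$ for $k\le n$ while the tail $k>n$ contributes terms $\|T^{k-n}x\|\le K\|x\|$ gives, after averaging over a suitable range of $n$, the bound $\|T^{-n}x\|\le (K/c)\|x\|$ uniformly, completing (i).
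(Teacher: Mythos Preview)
Your proposal is correct and, once the exploratory detours are stripped away, follows the same scheme as the paper: prove the cycle (i)$\Rightarrow$(ii)$\Rightarrow$(iii)$\Rightarrow$(i), with (ii)$\Rightarrow$(iii) via $\|x\|\le M\|T^kx\|$ and (iii)$\Rightarrow$(i) by first invoking Proposition~\ref{isometry} and then applying \eqref{below} to $T^{-n}x$ and letting the averaging parameter tend to infinity. The only substantive difference is cosmetic: the paper bounds the tail $\frac{1}{N-n}\sum_{j=1}^{N-n}\|T^jx\|$ directly by $\ac\|x\|$ using absolute Ces\`aro boundedness, whereas you bound each $\|T^{k-n}x\|$ individually by $K\|x\|$ using the power-boundedness of $T$ just obtained; both yield $c\|T^{-n}x\|\le C\|x\|$ after taking the $\limsup$ in $N$.

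Two remarks on presentation. First, your attempted appeal to Theorem~\ref{invertible} goes nowhere (as you noticed), and the ``bounded below, closed range, boundedly invertible'' discussion is redundant since $T$ is assumed invertible from the outset; you can delete all of that. Second, your propagation argument --- a single small value $\|T^{k_0}x\|<\delta\|x\|$ forces $\|T^kx\|\le K\delta\|x\|$ for all $k\ge k_0$, contradicting \eqref{below} when $K\delta<c$ --- is a clean self-contained alternative that immediately gives $\|T^nx\|\ge(c/K)\|x\|$ for all $n\ge1$, hence $\|T^{-n}\|\le K/c$; this is arguably more transparent than the averaging computation, and you could present it as the main argument rather than as an afterthought.
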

\begin{proof}  Clearly (i) iimplies (ii).

Assume (ii). Let $M:= \sup_k \|T^{-k}\|$. Then $\|x\|=\|T^{-k}T^k x\| \le M\|T^kx\|$,
and averaging yields \eqref{below}.

Assume (iii). Then by Proposition \ref{isometry}, $T$ is power-bounded. Fix $x \in X$
and $n \in\N$. Then for $N>n$ we have
$$
\frac1N \sum_{k=1}^N \|T^kT^{-n}x\| =
\frac1N \sum_{k=1}^n \|T^kT^{-n}x\| + \frac{N-n}N \cdot\frac1{N-n} \sum_{k=n+1}^N \|T^kT^{-n}x\| =
$$
$$
 \frac1N \sum_{k=1}^n \|T^kT^{-n}x\| + \frac{N-n}N \cdot\frac1{N-n} \sum_{j=1}^{N-n} \|T^jx\|
\le \frac1N \sum_{k=1}^n \|T^kT^{-n}x\| + \frac{N-n}N \ac \|x\| \ .
$$
By \eqref{below}, $c \|T^{-n}x\|  \le \limsup_N \frac1N \sum_{k=1}^N \|T^kT^{-n}x\| 
\le \ac\|x\|$. Hence $T^{-1}$ is power-bounded.
\end{proof}

{\bf Remark.} When $X$ is a Hilbert space, the conditions in Theorem \ref{invertible} or in
Theorem \ref{unitary} are equivalent to similarity of $T$ to a unitary operator, by \cite{SN}.
In $L^p$, $2 \ne p \in(1,\infty)$, an invertible doubly power-bounded operator need not be
similar to an invertible isometry \cite{Gil}, \cite{Co}.
\medskip

\section{$p$-absolute Ces\`aro boundedness and growth of powers}

In this section we complement Theorem \ref{noACB}, by exhibiting a Ces\`aro square bounded
operator on $\ell^2(\mathbb N)$ which is not strongly Kreiss bounded. This extends 
\cite[Corollary 2.2]{BBMP}, where the examples are on $\ell^p(\mathbb N)$,  $1<p<2$,
and answers Question 2.1 there.
\smallskip

The following definition includes absolute Ces\`aro boundedness ($p=1$) and Ces\`aro square 
boundedness ($p=2$). It turns out to be a special case of \cite[Definition 6.6]{ABY} (with 
$\alpha=1$).

{\bf Definition.} Let $1 \le p < \infty$. An operator $T$ on a Banach space is called
{\it $p$-absolutely Ces\`aro bounded} if there exists $C>0$ such that
\begin{equation} \label{pACB}
\sup_{n \ge 1} \frac1n \sum_{k=0}^{n-1} \|T^k x\|^p \le C^p\|x\|^p \qquad \forall \  x\in X.
\end{equation}
 We shall denote by $\pac$ the smallest constant for which \eqref{pACB} holds.

\medskip

Clearly, any $p$-absolutely Ces\`aro bounded operator is $r$-absolutely Ces\`aro bounded for
every $1 \le r \le p$, with $K_{r-\rm{ac}} \le \pac$.

The absolutely Ces\`aro bounded operator on $\ell^p(\mathbb N)$ constructed in 
\cite[Theorem 2.1]{BBMP} is shown in the proof to be $p$-absolutely Ces\`aro bounded.

It is easy to see that $p$-absolute Ces\`aro boundedness implies $\|T^n\|= O(n^{1/p})$.
The next proposition improves this trivial upper bound, and yields $\|T^n\|=o(n^{1/p})$
with a "polynomial" rate.

{
\begin{prop} \label{p-abs-norm}
Let $T$ be $p$-absolutely Ces\`aro bounded. Then 
$$\|T^n\|\le C \|T\|n^{(1/p-\varepsilon)}\, ,$$
where $C= \pac 2^{1/p\pac^p}$ and $\varepsilon=1/p \pac^p$.
\end{prop}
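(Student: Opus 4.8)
The plan is to bootstrap the trivial bound $\|T^n\| = O(n^{1/p})$ by iterating the $p$-absolute Cesàro inequality \eqref{pACB}, exploiting the fact that each iteration improves the exponent by a fixed amount. First I would fix $x \in X$ with $\|x\|=1$ and set $a_k = \|T^k x\|^p$. The defining inequality says $\frac1n\sum_{k=0}^{n-1} a_k \le \pac^p$ for all $n$; in particular $\sum_{k=0}^{n-1} a_k \le n\pac^p$. The key observation is that if we already know a bound $a_k \le M_j (k+1)^{\beta_j}$ for all $k$ (with some exponent $\beta_j < 1$), then for the single index $k = n$ we can write, using $a_{n+k} = \|T^k(T^n x)\|^p$ and submultiplicativity in the form $\|T^n x\| \le \|T\|^{?}\cdots$ — more cleanly, one splits: for any $m$,
$$
(m+1)\, a_n^{\,?} \ \le\ \sum_{k=0}^{m} \|T^{n} x\| \quad\text{compared with}\quad \sum_{k=0}^m \|T^{n-k}x\|\cdot\|T^k\|,
$$
so the cleanest route is the telescoping/averaging trick already used in the proof of Theorem \ref{invertible}: write $\|T^n x\| \le \|T^{n-k}x\|\,\|T^k\|$ and average over $k=0,\dots,n-1$, combined with the known polynomial bound on $\|T^k\|$ and the Cesàro bound on $\frac1n\sum \|T^{n-k}x\|$ (which follows from \eqref{pACB} and Hölder with exponent $p$). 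This yields an improved bound on $\|T^n x\|$, hence on $\|T^n\|$, with exponent decreased by roughly $1/p$ of the previous excess over the eventual target.

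Concretely, I expect the recursion to take the shape: if $\|T^n\| \le C_j (n+1)^{\gamma_j}$ for all $n \ge 1$, then averaging $\|T^{2n} x\| \le \frac1{n}\sum_{k=n}^{2n-1}\|T^{k} x\|\,\|T^{2n-k}\|$ over the stated range, applying Hölder to $\frac1n\sum\|T^k x\|$ (bounded by $\pac\,\|x\|$ after raising to power $p$), and inserting $\|T^{2n-k}\| \le C_j (n+1)^{\gamma_j}$, gives $\|T^{2n}\| \le 2^{1/p}\pac\, C_j^{?}(n+1)^{?}$ — one checks the exponent on $n$ becomes $\gamma_j(1 - 1/p') $-ish, i.e. the excess $\gamma_j - 1/p$ gets multiplied by a factor $<1$ at each step. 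Starting from $\gamma_0 = 1/p$ (the trivial bound) and iterating, $\gamma_j \downarrow 1/p - \varepsilon$ for the asserted $\varepsilon = 1/(p\pac^p)$; summing the geometric series in the constants produces the stated $C = \pac\, 2^{1/(p\pac^p)}$ and absorbs the single factor of $\|T\|$ that appears when passing between even and odd indices or from $n$ to $2n$.

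The main obstacle is bookkeeping: choosing the averaging window (here $k$ from $n$ to $2n-1$, so that the shifted operator $T^{2n-k}$ has exponent at most $n$ and the Cesàro average still only sees powers of $T$ applied to $x$ with controlled count) so that at each step the gain in the exponent is exactly a factor that telescopes to the claimed $\varepsilon$, and tracking the multiplicative constants so they converge to $C = \pac\, 2^{1/(p\pac^p)}$ rather than blowing up. One must verify that the factor by which the "excess exponent" shrinks is $1 - 1/(\text{something involving }\pac)$ and that the explicit fixed point is $1/p - 1/(p\pac^p)$; the role of $\pac^p$ in the exponent of $\varepsilon$ strongly suggests the recursion is $\gamma_{j+1} = \gamma_j - (\gamma_j - (1/p - \varepsilon))/\pac^p$ or similar, whose fixed point is exactly $1/p - \varepsilon$ and whose rate of convergence is governed by $1 - 1/\pac^p$. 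Once the recursion is set up correctly, passing to the limit over $j$ (for each fixed $n$, finitely many steps suffice to reach within any tolerance, or one takes $j \to \infty$ directly) gives $\|T^n\| \le C\|T\| n^{1/p - \varepsilon}$, completing the proof.
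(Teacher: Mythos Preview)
Your bootstrapping scheme has a genuine gap: the iteration you describe does not improve the exponent. Carry out the step you propose. From $\|T^{2n}x\|\le \|T^kx\|\,\|T^{2n-k}\|$ averaged over $k\in\{n,\dots,2n-1\}$ and H\"older with exponents $(p,p')$ you get
\[
\|T^{2n}x\|\le\Big(\frac1n\sum_{k=n}^{2n-1}\|T^kx\|^p\Big)^{1/p}
\Big(\frac1n\sum_{j=1}^{n}\|T^{j}\|^{p'}\Big)^{1/p'}.
\]
The first factor is at most $2^{1/p}\pac\|x\|$ by \eqref{pACB}. Inserting the hypothesis $\|T^j\|\le C_j j^{\gamma_j}$ into the second factor gives a quantity of order $C_j\,n^{\gamma_j}$. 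Hence $\|T^{2n}\|\le (\text{const})\cdot C_j\,n^{\gamma_j}$, which in terms of $m=2n$ is again $(\text{const})\cdot m^{\gamma_j}$. The exponent $\gamma_j$ is unchanged; only the constant moves. Iterating therefore produces at best a sequence of constants, never a smaller power of $n$ --- and the constants cannot tend to zero, since $\|T\|>0$. The recursion $\gamma_{j+1}=\gamma_j-(\gamma_j-(1/p-\varepsilon))/\pac^p$ you hope for simply does not emerge from this averaging.

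The paper uses the same elementary inequality $\|T^nx\|\le\|T^{n-k}\|\,\|T^kx\|$ but in the opposite direction: it is read as a \emph{lower} bound $\|T^kx\|^p\ge \|T^nx\|^p/\|T^{n-k}\|^p$. Summing over $k=0,\dots,n-1$ and invoking \eqref{pACB} yields the closed self-referential inequality
\[
\|T^n\|^p\sum_{k=1}^n\frac1{\|T^k\|^p}\le \pac^p\,n.
\]
Setting $u_n=\|T^n\|^p$ and $S_n=\sum_{k=1}^n 1/u_k$, this reads $u_n\le Cn/S_n$ with $C=\pac^p$, whence $S_n-S_{n-1}=1/u_n\ge S_n/(Cn)$. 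This is a discrete Gronwall inequality for $S_n$: integrating gives $\ln S_n-\ln S_1\ge C^{-1}(\ln(n+1)-\ln2)$, so $S_n\ge S_1\,2^{-1/C}n^{1/C}$ and finally $u_n\le C\,2^{1/C}u_1\,n^{1-1/C}$. No iteration on the exponent is needed; the polynomial gain comes in one stroke from the growth of $S_n$.
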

\begin{proof} We denote $\pac$ by $K$.
By assumption, for every $n\in \N$ and every $x\in X$ we have 
$\sum_{k=0}^{n-1}\|T^kx\|^p\le K^p n \|x\|^p$. 
Since $\|T^{n}x\| \le \|T^{n-k}\|\|T^k x\|$ for every  $0\le k\le n$, we obtain
$$
K^p n \|x\|^p \ge \|T^nx\|^p \sum_{k= 0 }^{n-1}\frac1{\|T^{n-k}\|^p}=  
\|T^nx\|^p \sum_{k=1}^{n}\frac1{\|T^{k}\|^p}\, .
$$
Hence 
$$
\|T^n\|^p \sum_{k=1}^{n}\frac1{\|T^{k}\|^p}\le K^pn\, .
$$
Then the result follows from the following numerical lemma, 
applied to $u_n=\|T^n\|^p$.
\end{proof}}

\begin{lem} \label{p-abs-lem}
Let $(u_n)_{n\in \N}$ be a sequence with values in $(0,+\infty)$, such that there exists $C\ge 1$,
 such that for every $n\in \N$, 
$$
u_n\le \frac{Cn}{\sum_{k=1}^{n} \frac1{u_k}}\, .
$$
Then, for every $n\in \N$, 
$$
u_{n}\le C2^{1/C}u_1n^{1-1/C}\, .
$$
\end{lem}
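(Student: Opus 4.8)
The plan is to prove the estimate $u_n \le C 2^{1/C} u_1 n^{1-1/C}$ by induction on $n$, using the recursive bound $u_n \le Cn / \sum_{k=1}^n u_k^{-1}$. The base case $n=1$ is immediate since $C2^{1/C} \ge 1$. For the inductive step, I would assume $u_k \le C 2^{1/C} u_1 k^{1-1/C}$ for all $k < n$ (and in fact it is convenient to also have it for $k=n$ available as a bootstrap, or to argue directly), and try to bound the sum $\sum_{k=1}^n u_k^{-1}$ from below.

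First I would estimate $\sum_{k=1}^{n} u_k^{-1}$. Rather than plugging the inductive hypothesis directly into all terms (which is circular for the $k=n$ term), the cleaner route is: from $u_k \le \dfrac{Ck}{\sum_{j=1}^k u_j^{-1}}$ we get $\dfrac{1}{u_k} \ge \dfrac{\sum_{j=1}^k u_j^{-1}}{Ck} \ge \dfrac{\sum_{j=1}^{k-1}u_j^{-1}}{Ck}$. Writing $S_n := \sum_{k=1}^n u_k^{-1}$, this says $S_k \ge S_{k-1}(1 + \tfrac{1}{Ck})$, hence $S_n \ge S_1 \prod_{k=2}^n \bigl(1 + \tfrac{1}{Ck}\bigr) = u_1^{-1}\prod_{k=2}^n\bigl(1+\tfrac1{Ck}\bigr)$. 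Taking logarithms and using $\log(1+t) \ge t - t^2/2 \ge t/2$ for $t \in (0,1/2]$ (noting $\tfrac1{Ck} \le \tfrac12$ for $k \ge 2$ since $C \ge 1$), I obtain $\log\bigl(u_1 S_n\bigr) \ge \tfrac1{2C}\sum_{k=2}^n \tfrac1k \ge \tfrac1{2C}(\log n - 1)$ — actually one should be slightly more careful to land on the exponent $1/C$ rather than $1/(2C)$; I would instead compare $\prod_{k=2}^n(1+\tfrac1{Ck})$ with $\prod_{k=2}^n(1+\tfrac1k)^{1/C} \cdot (\text{correction})$, or directly use that $\prod_{k=1}^n (1+\tfrac1{Ck}) \ge c_C\, n^{1/C}$ for a suitable constant, which follows from $\sum_{k=1}^n \log(1+\tfrac1{Ck}) = \tfrac1C\log n + O(1)$. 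The upshot is a bound of the form $S_n \ge \dfrac{n^{1/C}}{u_1 \cdot 2^{1/C}}$ once the constants are chased correctly; the power $2^{1/C}$ in the statement strongly suggests the intended elementary inequality is $\prod_{k=1}^n(1+\tfrac1{Ck}) \ge \tfrac1{2^{1/C}} n^{1/C}$, or something that telescopes to exactly this after invoking $\binom{n+1/C}{n} \ge$ the relevant quantity — essentially the identity $\prod_{k=1}^n (1+\tfrac{a}{k}) = \binom{n+a}{n}$ for $a = 1/C$, together with $\binom{n+a}{n} \ge \tfrac{(n+1)^a}{\Gamma(1+a)} \ge \tfrac{n^a}{2^a}$.

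Finally, I would substitute this lower bound on $S_n$ back into $u_n \le Cn/S_n$ to conclude $u_n \le Cn \cdot \dfrac{u_1 2^{1/C}}{n^{1/C}} = C 2^{1/C} u_1 n^{1-1/C}$, which is exactly the claim. Note this argument proves the bound for all $n$ directly from the lower bound on $S_n$, so a separate induction is not even needed once the product estimate $S_n \ge n^{1/C}/(u_1 2^{1/C})$ is established.

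The main obstacle is purely the bookkeeping of constants: getting the exponent to come out as exactly $1/C$ (not $1/(2C)$ or similar) and the constant as exactly $2^{1/C}$ requires either the clean Gamma-function / binomial identity $\prod_{k=1}^n(1+\tfrac{1}{Ck}) = \binom{n + 1/C}{n}$ combined with a sharp lower bound $\binom{n+1/C}{n} \ge n^{1/C}/2^{1/C}$, or a careful elementary induction on $n$ showing $\prod_{k=1}^n(1+\tfrac1{Ck}) \ge n^{1/C}/2^{1/C}$ via the one-step inequality $(1+\tfrac1{C(n+1)}) \ge \bigl(\tfrac{n+1}{n}\bigr)^{1/C}$, which in turn reduces to $\bigl(1+\tfrac1{C(n+1)}\bigr)^C \ge 1 + \tfrac1n$, i.e. to Bernoulli-type estimates. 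Everything else is routine.
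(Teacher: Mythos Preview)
Your overall strategy---set $S_n=\sum_{k=1}^n u_k^{-1}$, derive a recursive lower bound on $S_n$ from the hypothesis, solve that recursion to get $S_n\ge u_1^{-1}\,n^{1/C}/2^{1/C}$, and substitute back into $u_n\le Cn/S_n$---is correct and is exactly what the paper does. The paper handles the recursion via the integral comparison
\[
\int_{S_{N-1}}^{S_N}\frac{dx}{x}\ \ge\ \frac{S_N-S_{N-1}}{S_N}\ \ge\ \frac{1}{CN}\ \ge\ \frac1C\int_N^{N+1}\frac{dx}{x},
\]
which sums to $\ln S_n-\ln S_1\ge \tfrac1C(\ln(n+1)-\ln 2)$, i.e.\ $S_n\ge S_1\bigl((n+1)/2\bigr)^{1/C}$; your product formulation $S_k\ge S_{k-1}(1+\tfrac1{Ck})$ is the same inequality in multiplicative form.

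There is, however, a concrete error in your final bookkeeping: the one-step inequality you write, $\bigl(1+\tfrac1{C(n+1)}\bigr)^C\ge 1+\tfrac1n$, is \emph{false} for every $C\ge 1$ and $n\ge 1$ (already at $C=1$, $n=1$ it reads $3/2\ge 2$; in general $(1+\tfrac1{C(n+1)})^C$ is increasing in $C$ with limit $e^{1/(n+1)}<1+\tfrac1n$). The correct one-step is the Bernoulli estimate $\bigl(1+\tfrac1{Ck}\bigr)^C\ge 1+\tfrac1k$, which gives $1+\tfrac1{Ck}\ge\bigl(\tfrac{k+1}{k}\bigr)^{1/C}$ and hence
\[
\prod_{k=2}^n\Bigl(1+\frac1{Ck}\Bigr)\ \ge\ \prod_{k=2}^n\Bigl(\frac{k+1}{k}\Bigr)^{1/C}=\Bigl(\frac{n+1}{2}\Bigr)^{1/C},
\]
yielding precisely $S_n\ge u_1^{-1}\,n^{1/C}/2^{1/C}$ and the stated bound. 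So the fix is a one-index shift; once corrected, your argument and the paper's coincide.
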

\begin{proof} Set $S_N:= \sum_{k=1}^{N} \frac1{u_k}$. Our assumption implies that for every 
integer $N\ge 2$,
$$
\frac{S_{N}-S_{N-1}}{S_N}\ge \frac1{CN}\, .
$$
We then have
$$
\int_{S_{N-1}}^{S_{N}}\frac{dx}x\ge \frac{S_{N}-S_{N-1}}{S_{N}}\ge \frac{1}{CN}\ge 
\frac{1}{C}\int_N^{N+1} \frac{dx}x\, .
$$
Summing those inequalities for $N\in [2, n]$, we obtain that 
$$
\ln S_{n}-\ln S_1 \ge \frac{\ln (n+1)-\ln 2}{C}\, .
$$
Hence, $S_n\ge \frac{S_1 (n+1)^{1/C}}{2^{1/C}}\ge  \frac{S_1 n^{1/C}}{2^{1/C}}$, which yields 
$u_n \le \frac{Cn}{S_n} \le C\frac{2^{1/C}}{S_1} n^{1-1/C}$.
\end{proof}

{\bf Remarks.} 1. When $p=1$, the proposition improves Corollary 2.6 of \cite{BBMP},
where it is proved only that $\|T^n\|=o(n)$. 

2. The power of $n$ in Proposition  \ref{p-abs-norm} is best possible. 
Indeed, in \cite{BBMP}, it is proved that for every 
$p\ge 1$ and every $0<\varepsilon <1/p$, there exists a $p$-absolutely Ces\`aro bounded 
operator $T$ on $\ell^p(\N)$, such that $\|T^n\|=(n+1)^{1/p-\varepsilon}$.

3. Every positive Ces\`aro bounded operator $T$ on $L^1$ is absolutely Ces\`aro bounded. 
Hence, we recover Theorem 2 of Kornfeld and Kosek \cite{KK}. Actually, it happens that an 
application of their Corollary 2, with $\alpha_n:=\|T^nx\|^p$, yields a different proof 
of Proposition  \ref{p-abs-norm}.

4. Abadias and Bonilla \cite{AB} extended the definition of absolute Ces\`aro boundedness in 
a different direction. $T$ is defined to be {\it absolutely Ces\`aro-$\alpha$ bounded} if
$\sup_n M_n^{(\alpha)}(\{\|T^n\|\}) < \infty$, where $M_n^{(\alpha)}$ is the Ces\`aro mean of 
order $\alpha$ \cite[Chapter III]{Zy}. It is proved in \cite{AB} that if $T$ is absolutely 
Ces\`aro-$\alpha$ bounded for $0< \alpha \le 1$, then $\|T^n\|=o(n^\alpha)$; for $\alpha=1$ 
our Proposition \ref{p-abs-norm} (with $p=1$) gives a more precise estimate.

Actually, our method of proof of Proposition \ref{p-abs-norm} allows us to prove 
that absolute Ces\`aro-$\alpha$  bounded operators, $0<\alpha <1$, satisfy an estimate 
$\|T^n\| = O(n^{\alpha -\varepsilon})$ for some $\varepsilon >0$.

\begin{theo}\label{example}
Let $\delta\in (0,1)$ and define the measure $\nu:=\sum_{j\in \N}\frac{\delta_j}{j^\delta}$ 
on $\N$. Let $T$ be the left (backward) shift on $L^p(\N,\nu)$, with $1\le p<\infty$. 
Then, for any fixed $p$, $T$ is $p$-absolutely Ces\`aro bounded, $\|T^n\|_p=(n+1)^{\delta/p}$, 
$T$ is mean ergodic, and $T$ is not strongly Kreiss bounded. 
\end{theo}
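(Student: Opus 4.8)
The plan is to verify the four assertions essentially separately; everything but the last is a short computation with the explicit action of the shift.

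First I would record the elementary facts. Writing $e_m$ for the indicator of $\{m\}$, so that $\|e_m\|_p^p=m^{-\delta}$ and $T^nx(j)=x(j+n)$, one has $\|T^nx\|_p^p=\sum_{m\ge n+1}(m-n)^{-\delta}|x(m)|^p$; since $(m/(m-n))^\delta$ is largest at $m=n+1$ this gives $\|T^n\|_p^p\le(n+1)^\delta\|x\|_p^p$, with equality for $x=e_{n+1}$ (because $T^ne_{n+1}=e_1$), hence $\|T^n\|_p=(n+1)^{\delta/p}$. For $p$-absolute Ces\`aro boundedness I would interchange the order of summation, $\sum_{k=0}^{n-1}\|T^kx\|_p^p=\sum_{m\ge1}|x(m)|^p\sum_{k=0}^{\min(m-1,n-1)}(m-k)^{-\delta}$, and bound the inner sum (which equals $\sum_{i\in[\max(1,m-n+1),\,m]}i^{-\delta}$) by $\sum_{i=1}^{n}i^{-\delta}\le n^{1-\delta}/(1-\delta)$; dividing by $n$ yields $\frac1n\sum_{k=0}^{n-1}\|T^kx\|_p^p\le\frac1{1-\delta}\|x\|_p^p$, i.e. $\pac\le(1-\delta)^{-1/p}$.

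For mean ergodicity I would use that finitely supported vectors are dense in $L^p(\N,\nu)$ since $p<\infty$. If $x$ is supported on $\{1,\dots,M\}$ then $\tfrac1n\sum_{k=1}^nT^kx$ is supported on $\{1,\dots,M-1\}$ and each coordinate $\tfrac1n\sum_{k=1}^nx(j+k)$ tends to $0$, so $\tfrac1n\sum_{k=1}^nT^kx\to0$ in $L^p(\N,\nu)$. Since $T$ is in particular absolutely Ces\`aro bounded, $\sup_n\|\tfrac1n\sum_{k=1}^nT^k\|<\infty$, so the convergence extends to all of $L^p(\N,\nu)$ and $T$ is mean ergodic (with limit $0$). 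This works for every $1\le p<\infty$, so no reflexivity is needed.

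The substantive point is that $T$ is not strongly Kreiss bounded, and here I would use Nevanlinna's characterization \eqref{SKR2}: SKB is equivalent to $\|\e^{zT}\|\le M\e^{|z|}$ for all $z\in\C$. For real $t>0$, $\e^{-t}\e^{tT}$ is backward averaging against the Poisson$(t)$ law, $(\e^{-t}\e^{tT}x)(j)=\sum_{n\ge0}\frac{\e^{-t}t^n}{n!}\,x(j+n)$; on the unweighted $\ell^p$ this is a contraction, but in $L^p(\N,\nu)$ it can increase the norm because it transports mass from scale $\sim t$, where the weight is $\sim t^{-\delta}$, towards scale $1$, where the weight is $\sim1$. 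Accordingly I would test with $x=\mathbf 1_{\{m\in\N:\,t\le m\le t+\sqrt t\}}$, so $\|x\|_p^p=\sum_{t\le m\le t+\sqrt t}m^{-\delta}\le 2\sqrt t\,t^{-\delta}$. For $1\le j\le\sqrt t$ the index set $\{n:\,t\le j+n\le t+\sqrt t\}$ is an interval of length $\sqrt t$ contained in $[t-\sqrt t,\,t+\sqrt t]$, and there $\frac{\e^{-t}t^n}{n!}\ge c_0/\sqrt t$; hence $(\e^{-t}\e^{tT}x)(j)\ge c_0/2=:c$ for $1\le j\le\sqrt t$, so $\|\e^{-t}\e^{tT}x\|_p^p\ge c^p\sum_{1\le j\le\sqrt t}j^{-\delta}\ge c^p\,t^{(1-\delta)/2}/\bigl(2(1-\delta)\bigr)$ for $t$ large. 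Dividing by $\|x\|_p^p$ gives $\|\e^{-t}\e^{tT}\|_p^p\gtrsim t^{(1-\delta)/2}/(\sqrt t\,t^{-\delta})=t^{\delta/2}$, so $\|\e^{tT}\|_p/\e^{t}\ge c'\,t^{\delta/(2p)}\to\infty$, contradicting \eqref{SKR2}; thus $T$ is not SKB.

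The one delicate step is the uniform lower bound $(\e^{-t}\e^{tT}x)(j)\ge c$ on $1\le j\le\sqrt t$, which reduces to the fact that a block of $\asymp\sqrt t$ consecutive Poisson$(t)$ weights near the mean each have size $\asymp t^{-1/2}$. This is where I expect to spend the effort, but it is standard: Stirling gives $\frac{\e^{-t}t^{\lfloor t\rfloor}}{\lfloor t\rfloor!}\asymp t^{-1/2}$, and a telescoping product of the ratios $\frac{t}{t+i}$ (resp. $\frac{t-i}{t}$) shows that any two Poisson$(t)$ weights within distance $\sqrt t$ of the mean differ by a factor bounded away from $0$ and $\infty$; alternatively one may invoke the local central limit theorem. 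All the remaining estimates are routine bookkeeping with the explicit shift.
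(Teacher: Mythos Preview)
Your treatment of the norm computation, of mean ergodicity, and of the failure of strong Kreiss boundedness is correct and essentially coincides with the paper's argument: the paper also tests $\e^{NT}$ against the indicator of a block of length $\asymp\sqrt N$ located near $N$ and invokes the same Stirling-type lower bound on Poisson weights (stated there as Lemma~\ref{stirling}). For the first two assertions the paper takes a shortcut rather than computing directly: it conjugates $T$ by the isometry $V_p:L^p(\N,\nu)\to\ell^p(\N)$, $(V_px)_j=x_j\,j^{-\delta/p}$, recognises $V_pTV_p^{-1}$ as the weighted shift of \cite[Theorem~2.1]{BBMP} with $\alpha=\delta/p$, and quotes that result for both $\|T^n\|_p$ and $p$-absolute Ces\`aro boundedness.

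There is, however, a genuine slip in your direct verification of $p$-absolute Ces\`aro boundedness. After interchanging sums you bound the inner sum $\sum_{i=\max(1,m-n+1)}^{m} i^{-\delta}$ uniformly by $\sum_{i=1}^{n}i^{-\delta}\le n^{1-\delta}/(1-\delta)$, which only gives
\[
\frac1n\sum_{k=0}^{n-1}\|T^kx\|_p^p \;\le\; \frac{1}{(1-\delta)\,n^{\delta}}\sum_{m\ge 1}|x(m)|^p,
\]
and this is \emph{not} dominated by a constant times $\|x\|_p^p=\sum_{m}|x(m)|^p m^{-\delta}$: take $x=e_m$ with $m\gg n$. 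The uniform bound discards the dependence on $m$ that you need to match the weight $m^{-\delta}$. The fix is to compare the inner sum to $m^{-\delta}$ directly, i.e.\ to show $m^{\delta}\sum_{i=\max(1,m-n+1)}^{m} i^{-\delta}\le Cn$ uniformly in $m$ and $n$. For $m\le 2n$ one has $m^{\delta}\sum_{i=1}^{m}i^{-\delta}\le m/(1-\delta)\le 2n/(1-\delta)$; for $m>2n$ each of the $n$ terms satisfies $(m/i)^{\delta}\le 2^{\delta}$ since $i\ge m-n+1>m/2$, giving the bound $2^{\delta}n$. With this case split your direct argument goes through with $\pac\le (2/(1-\delta))^{1/p}$.
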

\begin{proof}
Fix $p$, and define $V_p\, :\, L^p(\N,\nu)\to \ell^p(\N)$, by 
$V_p(x_j)_{j\in \N}=(\frac{x_j}{j^{\delta/p}})_{j\in \N}$. Note that $V_p$ is an invertible
isometry. The operator $S:=V_pTV_{p}^{-1}$ is nothing but the operator considered in 
Theorem 2.1 of \cite{BBMP}, with $\alpha =\delta/p$ (and $\varepsilon =1 -\delta$). 
Hence, $T=V_p^{-1}SV_p$ is $p$-absolutely Ces\`aro bounded, and $\|T^n\|_p=(n+1)^{\delta/p}$
(we denote by $\|\cdot\|_p$ the norm in $L^p(\N,\nu)$).

Let $\{e_j\}_{j \in \N}$ be the standard basis. Then $T^k e_j=0$ for $k \ge j$, so
$\|\frac1n \sum_{k=1}^n T^k e_j\|_p \to 0$.
Since $T$ is Ces\`aro bounded, we obtain that $\|\frac1n\sum_{k=1}^n T^k x\|_p \to 0$
for every $x \in L^p(\N,\nu)$, so $T$ is mean ergodic.

It remains to prove that $T$ on $L^p(\N,\nu)$ is not strongly Kreiss bounded. 
By contradiction, assume that (\ref{SKR2}) holds: there exists $R>0$ such that for every 
$z\in \C$ , $\|{\rm e}^{zT}\|_p\le R {\rm e}^{|z|}$.  Fix $N\in \N$. Define 
$x=x^{(N)}=(x_n)_{n\in \N}$ as follows: $x_n=1$ if $N+1\le n\le N+2\sqrt N$, and $x_n=0$ 
otherwise.  Then $\|x\|_p^p \le \frac{2\sqrt N}{N^\delta}$. 
\smallskip

Let $z>0$. We have 
$$
\|{\rm e}^{zT}x\|_{p}^p= 
\sum_{k\in \N}\Big(\sum_{n\ge 0}\frac{z^n}{n!}x_{n+k}\Big)^p \frac1{k^\delta} \ge 
\sum_{1\le k\le\sqrt N} \Big(\sum_{N\le n\le N+\sqrt N} \frac{z^n}{n!}\Big)^p \frac1{k^\delta}\, .
$$ 
Taking $z=N$ and using Lemma \ref{stirling} below (with $d=1$), we infer that 

$$
R{\rm e}^N \big( \frac{2\sqrt N}{N^\delta}\big)^{1/p}\ge  
\|{\rm e}^{NT}x\|_{p}\ge 
C \sqrt N \frac{{\rm e}^{N}}{\sqrt N}\Big(\sum_{1\le k\le \sqrt N} \frac1{k^\delta}\Big)^{1/p}\ge 
\tilde C {\rm e}^N N^{(1-\delta)/2p}\, .
$$
Hence, $N^{\delta/2p}\le R2^{1/p}/\tilde C$ which yields a contradiction when $N\to +\infty$.
\end{proof}

\begin{lem}\label{stirling}
There exists $C>0$ such that for every $d>0$, for every $N\in \N$ and every integer  
$1-N \le K \in [-d\sqrt N , d \sqrt N]$,  \quad
$\frac{N^{N+K}}{(N+K)!}\ge C \frac{{\rm e}^{-d^2}}{\sqrt{d+1}} \frac{{\rm e}^{N}}{\sqrt N}$.
\end{lem}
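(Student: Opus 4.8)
The plan is to prove Lemma \ref{stirling} by a direct application of Stirling's formula, keeping careful track of how the error terms depend on the displacement $K$ relative to $\sqrt N$. First I would write $\log\!\big(N^{N+K}/(N+K)!\big) = (N+K)\log N - \log((N+K)!)$ and insert the Stirling estimate $\log(m!) = m\log m - m + \tfrac12\log(2\pi m) + O(1/m)$ with $m = N+K$; this reduces the problem to estimating
$$
(N+K)\log N - (N+K)\log(N+K) + (N+K) - \tfrac12\log\big(2\pi(N+K)\big)
$$
from below. The first two terms combine to $-(N+K)\log\!\big(1 + K/N\big)$, which is the heart of the matter.

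The key step is the expansion $\log(1+K/N) = K/N - \tfrac{K^2}{2N^2} + O(|K|^3/N^3)$, valid because $|K| \le d\sqrt N$ forces $|K/N| \le d/\sqrt N \to 0$ (and for the regime where $d$ is large one still has $|K|/N$ bounded, since $K \ge 1-N$ keeps $N+K \ge 1$; one may split into $|K|\le \sqrt N$ where the expansion is clean, and $\sqrt N \le |K| \le d\sqrt N$ where cruder bounds with the $d$-dependence suffice). Multiplying by $-(N+K)$ gives
$$
-(N+K)\log(1+K/N) = -K - \frac{K^2}{N} + \frac{K^2}{2N} + (\text{lower order}) = -K - \frac{K^2}{2N} + o(1),
$$
and the leftover $+(N+K)$ from Stirling cancels against the $-N$ hidden once one recombines; more precisely, adding back $+K$ from the $+(N+K)$ term against the $-K$ above, the exponential part of the estimate is $\exp\!\big(-\tfrac{K^2}{2N} + O(1)\big)$, which is bounded below by $c\,\e^{-d^2/2} \ge c\,\e^{-d^2}$ since $K^2/(2N) \le d^2/2$. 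Finally the term $-\tfrac12\log(2\pi(N+K))$ together with the $+N\log N - N$ bookkeeping produces the factor $\e^N/\sqrt N$ up to the constant $1/\sqrt{2\pi}$, and one absorbs $\sqrt{(N+K)/N} \le \sqrt{1+d/\sqrt N} \le \sqrt{d+1}$ (for $N\ge 1$) into the stated $1/\sqrt{d+1}$ denominator. Combining all constants into a single $C>0$ independent of $d$, $N$, $K$ gives the claim.

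The main obstacle is bookkeeping the uniformity in $d$: the naive Taylor expansion of $\log(1+K/N)$ is only controlled when $|K/N|$ is small, so I expect to have to treat the range $|K| \le \sqrt N$ and the range $\sqrt N < |K| \le d\sqrt N$ separately, using the sharp second-order expansion in the first range and a convexity/monotonicity bound of the form $-(N+K)\log(1+K/N) \ge -K - \tfrac{K^2}{N}$ for $K\ge 0$ (and a matching inequality for $-N < K < 0$, where one can bound $-(N+K)\log(1+K/N)\ge -K - \tfrac{K^2}{2N} - \tfrac{|K|^3}{N^2}$ and then $|K|^3/N^2 \le d\,K^2/N \le d^3$, still absorbable into $\e^{-d^2}$ up to adjusting the constant — note $\e^{-d^2}$ decays faster than any $\e^{cd^3}$ fails to, so one should instead phrase the loss as $\e^{-cd^2}$ and then weaken to $\e^{-d^2}$, or simply verify that the genuine second-order term dominates). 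I would also double-check the edge case $K = 1-N$ (i.e.\ $N+K=1$), where $N^{N+K}/(N+K)! = N$, against the claimed lower bound $C\e^{-d^2}\e^N/(\sqrt{d+1}\sqrt N)$: since here $|K|\approx N$ forces $d \gtrsim \sqrt N$, the factor $\e^{-d^2}$ is at most $\e^{-cN}$, so the right-hand side is $\lesssim \e^{N}\e^{-cN}/\sqrt N \le \e^{(1-c)N}$, which is indeed $\le N$ for large $N$ — consistent, so no separate argument is needed there beyond noting it.
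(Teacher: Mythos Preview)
Your outline via Stirling is exactly the paper's, and for $K\ge 0$ your ``convexity/monotonicity bound'' $-(N+K)\log(1+K/N)\ge -K-K^2/N$ is precisely what is needed. The gap is in your treatment of $K<0$: the cubic-remainder inequality you propose, even if granted, yields an exponent $K^2/(2N)+|K|^3/N^2$, which can be as large as $3d^2/2$ (bound $|K|^2\le d^2N$ and $|K|\le N$ to get $|K|^3/N^2\le d^2$), and $\e^{-3d^2/2}$ is strictly \emph{smaller} than $\e^{-d^2}$, so you do not recover the stated lower bound with a $d$-independent constant $C$. Your own hedging (``phrase the loss as $\e^{-cd^2}$ and then weaken to $\e^{-d^2}$'') goes the wrong way for the same reason.

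The fix, which is what the paper actually does, is to notice that your bound for $K\ge 0$ is nothing but the inequality $\log(1+x)\le x$ multiplied by $N+K>0$, and that this inequality holds for \emph{every} $x>-1$, hence for both signs of $K$. The paper therefore writes in one stroke
\[
(1+K/N)^{N+K}\le \e^{(N+K)K/N}=\e^{K+K^2/N}\le \e^{K+d^2},
\]
combines this with the one-sided Stirling estimate $M!\le R(M/\e)^M\sqrt{2\pi M}$ and with $N+K\le(1+d)N$, and is done in three lines. No Taylor expansion, no case split on $|K|$ versus $\sqrt N$, no separate edge-case check at $N+K=1$: the uniformity in $d$ is automatic.
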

\begin{proof}
By Stirling's formula there exists $R>0$, such that 
$M!\le R \big(\frac{M}{\rm e}\big)^M \sqrt{2M\pi}$ for every $M\in \N$.  
Hence, for every $N\in \N$ and every integer  $-d\sqrt N\le K\le d\sqrt N$, 
\begin{gather*}
(N+K)!    \le R\big(\frac{N}{\rm e}\big)^{N+K} (1+\frac{K}N)^{N+K}\sqrt{2(N+K)\pi } \le \\ 
     R\sqrt{2(d+1)N\pi} \big(\frac{N}{\rm e}\big)^{N+K} {\rm e}^{K+\frac{K^2}N}\le 
\tilde R \sqrt{d+1}\,{\rm e}^{d^2}N^{N+K}{\rm e}^{-N} \sqrt N \, ,
\end{gather*}
and the result follows.
\end{proof}

{\bf Remarks.} 1. When $p=2$, the isometry $V_2$ in the proof of Theorem \ref{example} yields 
that the operator on  $\ell^2(\N)$ constructed in \cite[Theorem 2.1]{BBMP} is 2-absolutely 
Ces\`aro bounded and not strongly Kreiss bounded.

2. For $p=1$, the operator $T$ of Theorem \ref{example} provides another example
of $T$ positive and mean ergodic on $L^1$ with $\|T^n\|=O(n^\delta)$, $\delta$ arbitrarily
close to 1; the first such example was obtained in \cite{KK}.

3. For $p=2$, Theorem \ref{example} yields that the operator $T$ on $\ell^2(\N)$ of 
\cite[Theorem 2.1]{BBMP} is absolutely Ces\`aro bounded, hence uniformly Kreiss bounded, 
but not strongly Kreiss bounded; in \cite[Corollary 2.2]{BBMP} the examples are only for $p<2$.
The examples of $T$ uniformly Kreiss bounded and not strongly Kreiss bounded given in 
\cite[Theorem 5.1]{MSZ} are only in $L^p$, $p\ne 2$, 
\medskip

We now characterize absolute Ces\`aro boundedness by a resolvent type condition. Recall that
when $r(T) \le 1$, for $|\lambda|>1$ we have $R(\lambda,T) =\sum_{n=0}^\infty \lambda^{-n-1}T^n$
with operator norm convergence (e.g. \cite[Lemma 3.2]{LSS}).

{\bf Definition.} We say that an operator $T$ on a complex Banach space $X$ is {\it absolutely 
Kreiss bounded} if there exists $C>0$ such that  
\begin{equation}\label{AK}
\sum_{n\ge 0}\frac{\|T^nx\|}{\lambda^{n+1}}\le\frac{ C\|x\|}{\lambda -1}\qquad 
\forall x\in X,\  \forall \lambda>1\, .
\end{equation}
Clearly, absolute Kreiss boundedness implies uniform Kreiss boundedness.

\begin{prop} An operator $T$ on a complex Banach space is absolutely Ces\`aro bounded if and only 
if it is absolutely Kreiss bounded.
\end{prop}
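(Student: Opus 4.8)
The plan is to prove the two implications separately, using the classical equivalence between Cesàro boundedness and Abel boundedness for non-negative sequences, applied pointwise to the scalar sequence $a_n := \|T^n x\|$ for a fixed $x \in X$.

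\medskip

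\textbf{From absolute Cesàro boundedness to absolute Kreiss boundedness.} Suppose $T$ is absolutely Cesàro bounded with constant $\ac$. Fix $x \in X$; then the non-negative sequence $a_n = \|T^n x\|$ satisfies $\sum_{k=0}^{n-1} a_k \le \ac n \|x\|$ for all $n \ge 1$. I would now invoke the standard Abel summation / partial summation estimate: for $\lambda > 1$, writing $\lambda = 1/r$ with $0 < r < 1$, one has
$$
\sum_{n\ge 0} r^{n+1} a_n = (1-r)\sum_{n\ge 0} r^{n}\Big(\sum_{k=0}^{n} a_k\Big)
$$
(rearranging the double sum, valid since all terms are non-negative), and bounding $\sum_{k=0}^{n} a_k \le \ac (n+1)\|x\|$ gives
$$
\sum_{n\ge 0} r^{n+1} a_n \le \ac \|x\| (1-r)\sum_{n\ge 0}(n+1) r^{n} = \frac{\ac\|x\|}{1-r}.
$$
Translating back, $\sum_{n\ge 0} \|T^n x\|/\lambda^{n+1} \le \ac\|x\|\,\lambda/(\lambda-1)$; absorbing the harmless factor $\lambda \le \lambda$ (or just noting $\lambda/(\lambda-1) \le$ const$/(\lambda-1)$ fails for large $\lambda$, so instead one keeps $\lambda/(\lambda-1)$ and observes this is $\le 2\ac\|x\|/(\lambda-1)$ is false — rather one uses that the right side of \eqref{AK} only needs \emph{some} $C$, and $\sum r^{n+1}a_n \le \ac\|x\|/(1-r) = \ac\|x\|\lambda/(\lambda-1)$, which for $\lambda \ge 2$ is $\le 2\ac\|x\|/(\lambda-1)$, while for $1<\lambda<2$ the sum is anyway $\le \ac\|x\|/(1-r)$; combining, \eqref{AK} holds with $C = 2\ac$). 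So $T$ is absolutely Kreiss bounded.

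\medskip

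\textbf{From absolute Kreiss boundedness to absolute Cesàro boundedness.} This is the direction I expect to be the main obstacle, since we are going from an Abel-type (generating-function) bound back to a Cesàro bound, and Abel summability does not in general imply Cesàro summability — one needs the non-negativity crucially. Fix $x$ and set $a_n = \|T^n x\|$, so $\sum_{n\ge 0} a_n/\lambda^{n+1} \le C\|x\|/(\lambda-1)$ for all $\lambda > 1$. I would estimate the partial sum $\sum_{k=0}^{n-1} a_k$ by choosing $\lambda$ depending on $n$, namely $\lambda = 1 + 1/n$ (equivalently $r = n/(n+1)$). Then for $0 \le k \le n-1$ we have $r^{k+1} \ge r^{n} = (1 - 1/(n+1))^{n} \ge c$ for an absolute constant $c > 0$ (e.g. $c = 1/e$ suffices, or a cleaner bound), hence by non-negativity
$$
c \sum_{k=0}^{n-1} a_k \le \sum_{k=0}^{n-1} r^{k+1} a_k \le \sum_{k\ge 0} r^{k+1} a_k \le \frac{C\|x\|}{\lambda - 1} = C n \|x\|.
$$
Therefore $\frac1n \sum_{k=0}^{n-1} \|T^k x\| \le (C/c)\|x\|$ for every $n \ge 1$ and every $x$, which is exactly \eqref{ACB} with $\ac \le C/c$. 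Both steps are elementary once the generating-function manipulations and the choice $\lambda = 1+1/n$ are in place; the only care needed is the uniform lower bound on $(1 - 1/(n+1))^n$, which follows from $\log(1-t) \ge -t/(1-t)$ or directly from the monotonicity of $(1-1/m)^{m}$.
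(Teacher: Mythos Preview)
Your approach is exactly the paper's: reduce to the classical equivalence of Ces\`aro and Abel boundedness for the non-negative scalar sequence $a_n=\|T^nx\|$. The paper simply cites this fact; you spell it out. The second direction (absolute Kreiss $\Rightarrow$ absolute Ces\`aro) is correct as written, including the bound $(1-1/(n+1))^n\ge 1/e$.

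The first direction, however, contains a computational slip that derails your conclusion. The Abel summation identity you wrote is off by a factor of $r$: rearranging the double sum gives
\[
(1-r)\sum_{n\ge 0} r^{n}\sum_{k=0}^{n} a_k
=(1-r)\sum_{k\ge 0}a_k\sum_{n\ge k}r^n
=\sum_{k\ge 0} r^{k}a_k,
\]
not $\sum_{k\ge 0} r^{k+1}a_k$. Because of this you end up with $\sum_{n\ge 0}\|T^nx\|/\lambda^{n+1}\le \ac\|x\|\,\lambda/(\lambda-1)$ and then try to absorb the extra $\lambda$; the ensuing discussion is incorrect (in particular, ``$\lambda/(\lambda-1)\le 2/(\lambda-1)$ for $\lambda\ge 2$'' is false for $\lambda>2$, and there is no uniform $C$ with $\lambda\le C$). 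With the correct identity the problem disappears:
\[
\sum_{n\ge 0}\frac{\|T^nx\|}{\lambda^{n+1}}
= r\sum_{n\ge 0} r^{n}a_n
= r(1-r)\sum_{n\ge 0} r^{n}S_n
\le \ac\|x\|\,\frac{r}{1-r}
=\frac{\ac\|x\|}{\lambda-1},
\]
so \eqref{AK} holds with $C=\ac$. Once this is fixed, your write-up is a faithful elaboration of the paper's one-line proof.
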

\begin{proof}
By putting $\alpha=1/\lambda$, (\ref{AK}) becomes 
$\sup_{\alpha \in (0,1)} (1-\alpha)\sum_{n=0}^\infty \alpha^n \|T^n x\| \le C\|x\|$.
Since for {\it positive} sequences Ces\`aro boundedness and Abel boundedness are equivalent 
(e.g. \cite[1.5-1.7]{Em}), we obtain the claimed equivalence.
\end{proof}

{\bf Remark.} The proposition yields that absolute Kreiss boundedness implies uniform Kreiss
boundedness.
\smallskip

{\bf Definition.} An operator $T$ on a (real or complex) Banach space is {\it strongly 
Ces\`aro bounded} (SCB) if there exists $C>0$ such that
\begin{equation} \label{scb}
\sup_{n \in \N} \frac1n \sum_{k=0}^{n-1} |\langle x^*,T^n x\rangle | \le C\|x\|\cdot \|x^*\| \quad
\text { for every } \ x\in X,\quad x^* \in X^*.
\end{equation}
We denote by $K_{scb}$ the smallest $C$ for which (\ref{scb}) holds.

Obviously strong Ces\`aro boundedness implies Ces\`aro boundedness, and
absolute Ces\`aro boundedness implies  SCB. If $T$ is ACB on a reflexive space and not 
power-bounded, then by \cite[Theorem 2.2]{BBMP} (Proposition \ref{zwart}), 
$T^*$ is not ACB, but it is SCB since $T$ is.

\begin{prop} \label{scb1}
$T$ (on a real or complex Banach space $X$) is strongly Ces\`aro bounded if and only if
there exists $C>0$, such that for every sequence of scalars $(\gamma_k)_{k\in \N_0}$ with
$|\gamma_k|=1$ we have
\begin{equation} \label{scb2}
\sup_{n \in \N} \| \frac1n \sum_{k=0}^{n-1} \gamma_k T^k \| \le C.
\end{equation}
\end{prop}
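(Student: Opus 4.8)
The plan is to derive both implications directly from the definitions, using a Hahn--Banach duality argument together with a phase-alignment choice of the unimodular scalars; no deeper input is needed. First I would prove that \eqref{scb2} implies \eqref{scb}. Fix $x \in X$, $x^* \in X^*$ and $n \in \N$. For $0 \le k \le n-1$ choose scalars $\gamma_k$ with $|\gamma_k| = 1$ so that $\gamma_k \langle x^*, T^k x\rangle = |\langle x^*, T^k x\rangle|$ (take $\gamma_k = 1$ where the pairing vanishes; in the real case this forces $\gamma_k = \pm 1$), and extend the tuple to an infinite unimodular sequence by setting $\gamma_k := 1$ for $k \ge n$. Then
$$
\frac1n \sum_{k=0}^{n-1} |\langle x^*, T^k x\rangle| = \Big\langle x^*,\ \frac1n \sum_{k=0}^{n-1} \gamma_k T^k x \Big\rangle \le \|x^*\| \cdot \Big\| \frac1n \sum_{k=0}^{n-1} \gamma_k T^k \Big\| \cdot \|x\| \le C \|x\|\,\|x^*\|,
$$
the last inequality being \eqref{scb2} applied to this sequence. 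Since $x$, $x^*$, $n$ are arbitrary, $T$ is strongly Ces\`aro bounded with $K_{scb} \le C$.

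Conversely, assume \eqref{scb} and let $(\gamma_k)_{k \in \N_0}$ be any sequence with $|\gamma_k| = 1$. Fix $n \in \N$, $x \in X$ with $\|x\| \le 1$, and $x^* \in X^*$ with $\|x^*\| \le 1$. Then
$$
\Big| \Big\langle x^*,\ \frac1n \sum_{k=0}^{n-1} \gamma_k T^k x \Big\rangle \Big| \le \frac1n \sum_{k=0}^{n-1} |\gamma_k|\,|\langle x^*, T^k x\rangle| = \frac1n \sum_{k=0}^{n-1} |\langle x^*, T^k x\rangle| \le K_{scb}
$$
by \eqref{scb}. Taking the supremum over $\|x^*\| \le 1$ and then over $\|x\| \le 1$ gives $\big\| \frac1n \sum_{k=0}^{n-1} \gamma_k T^k \big\| \le K_{scb}$, and since $n$ and the sequence were arbitrary, \eqref{scb2} holds with $C = K_{scb}$ (in fact $K_{scb}$ coincides with the optimal constant in \eqref{scb2}).

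There is no genuine obstacle here; the only points requiring a little care are the choice of $\gamma_k$ aligning the phases of the scalars $\langle x^*, T^k x\rangle$, the trivial observation that a finite unimodular tuple extends to an infinite unimodular sequence without affecting the relevant partial sum (so that \eqref{scb2} as stated for sequences in $\N_0$ may be invoked), and the uniform treatment of the real and complex scalar fields through the normalization $|\gamma_k| = 1$.
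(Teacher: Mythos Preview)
Your proof is correct and follows essentially the same approach as the paper: both directions use the duality formula for the operator norm together with the phase-alignment choice $\gamma_k = \overline{\langle x^*, T^k x\rangle}/|\langle x^*, T^k x\rangle|$ to pass between \eqref{scb} and \eqref{scb2}. Your write-up is in fact slightly more careful about bookkeeping (extending finite tuples to infinite sequences, the real case), but the argument is the same.
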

\begin{proof}
If $T$ is strongly Ces\`aro bounded, then for $(\gamma_k)_k$ with $|\gamma_k|=1$ and $n \in \N$ we have
$$
\| \frac1n \sum_{k=0}^{n-1} \gamma_k T^k \| = 
\sup_{\|x\|=1=\|x^*\|} \Big|\frac1n \sum_{k=0}^{n-1}\gamma_k \langle x^*,T^k x \rangle \Big| 
\le K_{scb}.
$$

Assume now that (\ref{scb2}) holds. Fix $x \in X$ and $x^* \in X^*$. Define 
$\gamma_k = \overline{\langle x^*,T^kx \rangle } / |\langle x^*,T^k x \rangle |$ (with the convention 
$\gamma_k=1$ if the terms are zero). Then
$$
 \frac1n \sum_{k=0}^{n-1} |\langle x^*,T^n x\rangle | =
 \frac1n \sum_{k=0}^{n-1} \gamma_k \langle x^*,T^n x\rangle  \le C\|x\|\cdot \|x^*\|.
\vspace{-0.3cm}
$$
\end{proof}

{\bf Remark.} In the complex case, it is enough that (\ref{scb2}) hold for 
$\gamma_k \in \{-1,1\}$.  The proof is similar, taking once 
$\gamma_k = \text{sign } Re \langle x^*,T^k x\rangle $, and then
$\gamma_k = \text{sign } Im \langle x^*,T^k x\rangle $.

\begin{cor} \label{scb*}
$T$ is strongly Ces\`aro bounded if and only if  $T^*$ is.
\end{cor}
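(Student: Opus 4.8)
The plan is to reduce the statement entirely to the characterization in Proposition \ref{scb1}, using the fact that condition \eqref{scb2} is phrased purely in terms of operator norms of the finite sums $\frac1n\sum_{k=0}^{n-1}\gamma_k T^k$, and such norms are unchanged under passing to adjoints.

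First I would record the elementary facts: for any bounded operator $S$ between Banach spaces one has $\|S^*\|=\|S\|$ (this is immediate from $\|S\|=\sup_{\|x\|\le 1,\ \|x^*\|\le 1}|\langle x^*,Sx\rangle|$, which is symmetric in $S$ and $S^*$); moreover $(T^k)^*=(T^*)^k$ and $S\mapsto S^*$ is linear, so for every $n$ and every sequence of scalars $(\gamma_k)$ with $|\gamma_k|=1$ we have $\frac1n\sum_{k=0}^{n-1}\gamma_k (T^*)^k=\bigl(\frac1n\sum_{k=0}^{n-1}\gamma_k T^k\bigr)^*$, hence $\bigl\|\frac1n\sum_{k=0}^{n-1}\gamma_k (T^*)^k\bigr\|=\bigl\|\frac1n\sum_{k=0}^{n-1}\gamma_k T^k\bigr\|$.

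Next, apply Proposition \ref{scb1} to $T$ on $X$ and to $T^*$ on $X^*$. By the previous paragraph, the instance of condition \eqref{scb2} for $T^*$ is literally the same family of inequalities as the instance for $T$ (same left-hand sides, ranging over the same set of unimodular scalar sequences). Therefore one holds with a constant $C$ if and only if the other does with the same $C$, and Proposition \ref{scb1} gives that $T$ is strongly Ces\`aro bounded if and only if $T^*$ is, with $K_{scb}(T^*)=K_{scb}(T)$.

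As for difficulties: there is essentially no obstacle here once Proposition \ref{scb1} is in hand. The only point worth a remark is that working directly from the definition \eqref{scb} would force one to deal with the bidual $X^{**}$ when characterizing strong Ces\`aro boundedness of $T^*$; that can be handled by a weak-$*$ density (Goldstine) argument, since each functional $x^{**}\mapsto|\langle (T^*)^k x^*,x^{**}\rangle|$ is weak-$*$ continuous and the relevant supremum is over a finite sum of such, but routing through the unimodular-multiplier criterion \eqref{scb2} avoids this altogether.
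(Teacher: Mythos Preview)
Your proof is correct and follows exactly the approach indicated in the paper, which simply says ``Use the characterization \eqref{scb2}.'' You have spelled out the details the paper leaves implicit (that $\|S^*\|=\|S\|$ and $(T^k)^*=(T^*)^k$), and your aside about the bidual is a nice remark but, as you note, unnecessary once one routes through Proposition~\ref{scb1}.
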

\begin{proof} Use the characterization (\ref{scb2}).
\end{proof}

\begin{cor} \label{scb-uk}
If $T$ on a complex Banach space is strongly Ces\`aro bounded, then it is uniformly 
Kreiss bounded.
\end{cor}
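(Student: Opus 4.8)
The plan is to deduce the statement from the ``rotated Ces\`aro'' characterization (\ref{rotated}) of the uniform Kreiss condition. By Proposition \ref{scb1}, strong Ces\`aro boundedness of $T$ is equivalent to the existence of a constant $C$ (for instance $C=K_{scb}$) such that $\sup_{n\in\N}\|\frac1n\sum_{k=0}^{n-1}\gamma_k T^k\|\le C$ for every scalar sequence $(\gamma_k)_{k\in\N_0}$ with $|\gamma_k|=1$; this is precisely condition (\ref{scb2}).

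First I would fix $\lambda\in\C$ with $|\lambda|=1$ and apply this bound to the unimodular sequence $\gamma_k=\lambda^k$. Since $\gamma_k T^k=(\lambda T)^k$, it yields $\sup_{n}\|\frac1n\sum_{k=0}^{n-1}(\lambda T)^k\|\le C$, with $C$ independent of $\lambda$. Next, a one-line index shift passes from the partial sums $\sum_{k=0}^{n-1}$ to the partial sums $\sum_{k=1}^{n}$ occurring in (\ref{rotated}): writing $\sum_{k=1}^n(\lambda T)^k=\sum_{k=0}^n(\lambda T)^k-I$ and using the previous estimate with $n+1$ in place of $n$ to get $\|\sum_{k=0}^n(\lambda T)^k\|=(n+1)\|\frac1{n+1}\sum_{k=0}^{(n+1)-1}(\lambda T)^k\|\le(n+1)C$, one obtains $\|\frac1n\sum_{k=1}^n(\lambda T)^k\|\le\frac{n+1}{n}C+\frac1n\le 2C+1$ for all $n\ge1$, again uniformly over $|\lambda|=1$. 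Hence $T$ satisfies (\ref{rotated}), and since by \cite{MSZ} condition (\ref{rotated}) is equivalent to the uniform Kreiss condition (\ref{UKR}), the operator $T$ is uniformly Kreiss bounded.

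I do not expect a genuine obstacle: the corollary is essentially immediate from Proposition \ref{scb1} and the already cited equivalence (\ref{UKR})$\Leftrightarrow$(\ref{rotated}). The only point requiring a moment's care is the bookkeeping between the ranges $0\le k\le n-1$ and $1\le k\le n$, which costs merely the harmless loss of a factor $2$ and an additive $1$ in the constant. If one wished to avoid invoking (\ref{rotated}), one could instead insert the standard Abel-summation step that turns a bound on $\|\frac1n\sum_{k=1}^n(\lambda T)^k\|$, $|\lambda|=1$, into a bound of the form $\|\sum_{k=0}^n \lambda^{-k-1}T^k\|\le C'/(|\lambda|-1)$ for $|\lambda|>1$, but this would only amount to reproving that same equivalence.
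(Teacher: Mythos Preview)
Your proof is correct and follows essentially the same approach as the paper: apply Proposition \ref{scb1} with the unimodular sequence $\gamma_k=\lambda^k$ to obtain (\ref{rotated}), which by \cite{MSZ} is equivalent to the uniform Kreiss condition. The paper's one-line proof simply omits the index-shift bookkeeping you carried out, treating the passage from $\sum_{k=0}^{n-1}$ to $\sum_{k=1}^n$ as obvious.
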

\begin{proof} 
For $\gamma \in \mathbb T$, put $\gamma_k=\gamma^k$ in \eqref{scb2},
 and obtain that (\ref{rotated}) holds.
\end{proof}

\section{Growth of powers of operators on Hilbert spaces}

In this section we show that when the operators act on a Hilbert space, we can improve the
estimates on the size of the norms of the powers. We obtain estimates for Kreiss
bounded, absolute Ces\`aro bounded and strongly Kreiss bounded operators.

\begin{theo} \label{kreiss-h}
Let $T$ be a Kreiss bounded operator on a complex Hilbert space $H$. Then 
$\|T^n\|=O(n/\sqrt{\log n})$.
\end{theo}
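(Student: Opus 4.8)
The plan is to work through the resolvent and exploit the Parseval identity, which is essentially the only extra tool the Hilbert-space setting provides here. Since \eqref{KRC} forces $r(T)\le 1$, for every $\rho\in(0,1)$ we have $R(\rho^{-1}e^{i\phi},T)x=\sum_{k\ge 0}\rho^{k+1}e^{-i(k+1)\phi}T^kx$ in $H$, so integrating in $\phi$ and using the Kreiss bound gives, for every $x\in H$,
\[
\sum_{k\ge 0}\rho^{2k}\|T^kx\|^2=\frac{\rho^{-2}}{2\pi}\int_0^{2\pi}\big\|R(\rho^{-1}e^{i\phi},T)x\big\|^2\,d\phi\le\frac{\kb^2\,\|x\|^2}{(1-\rho)^2}.
\]
The same applied to $T^*$ (which satisfies \eqref{KRC} with the same constant, since $R(\lambda,T^*)=R(\bar\lambda,T)^*$) yields the analogue for the backward orbit. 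Multiplying the displayed inequality by $(1-\rho)$ and integrating over $\rho\in(0,1-\tfrac1n)$, and using that $\int_0^{1-1/n}(1-\rho)\rho^{2k}\,d\rho\ge c\,(k+1)^{-2}$ uniformly for $0\le k\le n$, one arrives at a weighted estimate
\[
\sum_{k=1}^{n}\frac{\|T^kx\|^2}{k^2}\le C_1\,(\log n)\,\|x\|^2\qquad (n\ge 2),
\]
together with a whole scale of such inequalities (integrating against $(1-\rho)^{1+\varepsilon}\,d\rho$ produces $\sum_k k^{-2-\varepsilon}\|T^kx\|^2\le C\varepsilon^{-1}\|x\|^2$).

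With these in hand I would represent the powers by the Cauchy integral $T^nx=\frac1{2\pi i}\oint_{|\lambda|=r}\lambda^nR(\lambda,T)x\,d\lambda$, but taking the radius not at the classical value $1+\tfrac1n$ rather at $r=1+\tfrac{\kappa_n}{n}$ with $\kappa_n\to\infty$ of logarithmic order, and estimate the integral by pairing the pointwise Kreiss bound against the $L^2$-in-angle identity and the weighted estimates above — for $T$, for $T^*$, and for a vector nearly attaining $\|T^n\|$. The aim is to reach a recursive inequality for $u_n:=\|T^n\|^2$ of the type handled by a numerical lemma in the spirit of Lemma~\ref{p-abs-lem}, whose solution is $u_n=O(n^2/\log n)$, i.e.\ $\|T^n\|=O(n/\sqrt{\log n})$.

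The main obstacle is that \eqref{KRC} and its Parseval reformulation are of the same strength — each says only that the resolvent has size $\asymp(|\lambda|-1)^{-1}$ — so every naive combination with the Cauchy integral merely reproduces the Lubich–Nevanlinna bound $\|T^n\|=O(n)$. Extracting the extra factor $(\log n)^{-1/2}$ is genuine: it must use that the coefficients appearing in the resolvent expansion form an orbit $(T^kx)_k$ rather than arbitrary vectors, and that in Hilbert space controlling $\sum_k\rho^{2k}\|T^kx\|^2$ for \emph{all} $\rho$ is strictly more than controlling it at a single $\rho$. Equivalently, what is needed is a sharpening, uniform in the dimension, of the quantitative Kreiss matrix theorem, from $\|A^n\|\le e\,\kb\,N$ to $\|A^n\|\lesssim \kb\,n/\sqrt{\log n}$. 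I expect the gain to come from a trade-off, carried across a logarithmic range of radii, between the size of the resolvent on the corresponding circle and the size of the relevant contour integral; making this balance precise is the heart of the argument and the point at which the Hilbert-space hypothesis is really used — in the general Banach case only $O(n)$ survives.
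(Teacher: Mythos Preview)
Your opening move is exactly the paper's: integrating $\|R(\rho^{-1}e^{i\phi},T)x\|^2$ in $\phi$ and taking $\rho=1-1/N$ gives
\[
\sum_{k=0}^{N-1}\|T^kx\|^2\le CN^2\|x\|^2,
\]
and the same for $T^*$. This is \eqref{bound-KB}, and it is indeed the only place the Hilbert-space structure enters. Your further weighted estimates ($\sum k^{-2}\|T^kx\|^2\lesssim(\log n)\|x\|^2$, etc.) are correct but are just repackagings of \eqref{bound-KB} via dyadic summation; they carry no extra information.

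The gap is the second half. You propose to feed these estimates into a Cauchy integral at radius $1+\kappa_n/n$ and hope for a recursion ``in the spirit of Lemma~\ref{p-abs-lem}'', but you do not say what inequality results, and you yourself flag that every direct combination reproduces only $O(n)$. This is accurate: the Cauchy integral sees only $\|R(\lambda,T)\|$, not the orbit structure, so no choice of contour radius will by itself beat Lubich--Nevanlinna. A recursion of Lemma~\ref{p-abs-lem} type also cannot give $n^2/\log n$ --- that lemma outputs polynomial rates $n^{1-1/C}$, not logarithmic corrections.

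What the paper does instead is purely real-variable and avoids the Cauchy integral entirely. From \eqref{bound-KB} for $T$ and $T^*$ one writes, for $0\le P<Q\le N$,
\[
(Q-P)^2|\langle T^Nx,y\rangle|^2
=\Big(\sum_{k=P}^{Q-1}|\langle T^{N-k}x,T^{*k}y\rangle|\Big)^2
\le CQ^2\|y\|^2\sum_{k=P}^{Q-1}\|T^{N-k}x\|^2,
\]
so each dyadic block $\sum_{k=N+1-2^{\ell+1}}^{N-2^{\ell}}\|T^kx\|^2$ is bounded \emph{below} by a fixed multiple of $\|T^Nx\|^2$. There are $\asymp\log N$ disjoint such blocks inside $[0,N]$, and their total is at most $CN^2\|x\|^2$ by \eqref{bound-KB}; this forces $\|T^Nx\|^2\lesssim N^2/\log N$. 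The idea you are missing is this lower-bound-on-blocks step: the duality $\langle T^Nx,y\rangle=\langle T^{N-k}x,T^{*k}y\rangle$ plus Cauchy--Schwarz, applied over a dyadic range of $k$, is what converts the quadratic estimate into a logarithmic gain.
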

\begin{proof} By assumption, for every $z>1$, every $\gamma \in \mathbb T$ and $x \in H$,
 we have
$$
\Big\|\sum_{n\ge 0} \frac{T^nx}{(z\gamma)^{n+1}}\Big\|^2 = \|R(z\gamma,T)x\|^2 
\le \frac{K^2\|x\|^2}{(z-1)^2}\, .
$$
\smallskip

Fix $N \in \N$ and take $z=1+1/N$. Integrating the above inequality over $\{|\gamma|=1\}$, 
we obtain 
\begin{gather*}
K^2N^2 \|x\|^2 \ge \sum_{n\ge 0} \frac{\|T^nx\|^2}{(1+\frac1N)^{2(n+1)}}\ge 
(1+\frac1N)^{-2N}\sum_{n=0}^{N-1}\|T^nx\|^2\ge \tilde C \sum_{n=0}^{N-1}\|T^nx\|^2\, .
\end{gather*}

Hence, there exists $C>0$ such that
\begin{equation}\label{bound-KB}
\sum_{n=0}^{N-1}\|T^nx\|^2\le C N^2 \|x\|^2 \qquad \forall N\in \N\, .
\end{equation}

Now, $T^*$ is also Kreiss bounded (with the same constant), hence also $T^*$ satisfies 
\eqref{bound-KB}. Let $0 \le P< Q\le N$ be integers. We have, for every $x,y\in H$,
\begin{gather*}
(Q-P)^2|\langle T^Nx,y\rangle|^2=\Big(\sum_{k=P}^{Q-1} |\langle T^{N-k}x,T^{*k}y\rangle|\Big)^2 \le\\
 \Big(\sum_{k=P}^{Q-1}\|T^{N-k}x\|^2\Big)\Big(\sum_{k=0}^{Q-1}\|T^{*k}y\|^2\Big)\le 
CQ^2\|y\|^2\sum_{k=P}^{Q-1}\|T^{N-k}x\|^2\, .
\end{gather*}

Taking the supremum over $\{\|y\|=1\}$ we infer that 
\begin{equation}\label{claim-BBMP}
\frac{(Q-P)^2}{Q^2} \|T^Nx\|^2 \le C  \sum_{k=P}^{Q-1}\|T^{N-k}x\|^2\, .
\end{equation}
This inequality is just Claim 4 of \cite{BBMP}. 
\smallskip

Let $N\in \N$ and define $L:=\log (N/2)/\log 2$.  It follows from \eqref{claim-BBMP} that 
for every $0\le \ell \le L-1$,
$$
 \sum_{k=N+1-2^{\ell +1}}^{N-2^{\ell}}\|T^kx\|^2\ge \|T^Nx\|^2/4C\, .
$$
Hence, using \eqref{bound-KB},
\begin{gather*}
\|x\|^2 C^2N^2 \ge \sum_{\ell=0}^{L-1} \sum_{k=N+1-2^{\ell +1}}^{N-2^{\ell}}\|T^kx\|^2 \ge 
L\|T^Nx\|^2/4C\, ,
\end{gather*}
and the result follows.
\end{proof}

\begin{cor}
Let $T$ be a Ces\`aro bounded positive operator on a complex Hilbert lattice (which is
necessarily isometrically lattice isomorphic to an  $L^2$ space \cite[p. 128]{MN}).
Then  $T$ is Kreiss bounded, $\|T^n\|=O(n/\sqrt{\log n})$, and $T$ is mean ergodic.
\end{cor}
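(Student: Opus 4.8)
The plan is to show that a Ces\`aro bounded positive operator $T$ on a complex Hilbert lattice is Kreiss bounded, and then invoke Theorem \ref{kreiss-h} together with the growth estimate to get mean ergodicity. The key point is a general principle that will be proved in Section 5 (anticipated in the introduction): a positive Ces\`aro bounded operator on a complex Banach lattice is uniformly Kreiss bounded, hence in particular Kreiss bounded. So the first step would be to quote that result. Alternatively, if one wants a self-contained argument here, recall that for a positive operator, Ces\`aro boundedness of $T$ is equivalent to Abel boundedness of $T$ in the sense that $\sup_{0<r<1}\|(1-r)\sum_{n\ge 0} r^n T^n\|<\infty$ (this uses positivity, via \cite[1.5--1.7]{Em} applied to the positive sequence $\langle x^*, T^n x\rangle$ for $0\le x$, $0\le x^*$, plus a lattice argument to remove positivity of $x$); this Abel bound is exactly the Kreiss resolvent condition $\|R(\lambda,T)\|\le C/(|\lambda|-1)$ restricted to real $\lambda>1$, and then one upgrades to all $|\lambda|>1$ using positivity again (domination $\|R(\lambda,T)x\|\le R(|\lambda|,T)|x|$ in the lattice).

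Second, with Kreiss boundedness in hand, Theorem \ref{kreiss-h} applies verbatim since a complex Hilbert lattice is (by \cite[p. 128]{MN}) isometrically lattice isomorphic to an $L^2(\mu)$, which is a complex Hilbert space; this immediately gives $\|T^n\|=O(n/\sqrt{\log n})$, and in particular $\|T^n\|/n\to 0$.

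Third, for mean ergodicity: the underlying space is reflexive, and one needs that the Ces\`aro averages $\frac1n\sum_{k=1}^n T^k x$ converge strongly. Since $\|T^n\|/n\to 0$, the operator $T$ has no eigenvalue $1$ obstruction in the usual mean-ergodic splitting; more precisely, Ces\`aro boundedness plus $T^n/n\to 0$ strongly (which follows from $\|T^n\|/n\to 0$) implies, on a reflexive space, that $X = \overline{(I-T)X}\oplus \ker(I-T)$ and the averages converge — this is the classical mean ergodic theorem under Ces\`aro boundedness with $T^nx/n\to 0$ (see e.g. the argument already used for Abel convergence in the introduction, or Section 5 of the paper). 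So the conclusion follows.

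The main obstacle is the first step — proving that positivity upgrades Ces\`aro boundedness to the full (complex) Kreiss resolvent condition. The subtlety is that Ces\`aro boundedness is a real-line/nonnegative statement, while the Kreiss condition is about all $\lambda$ with $|\lambda|>1$; bridging this requires the lattice domination $|R(\lambda,T)x| \le R(|\lambda|,T)|x|$ for positive $T$ and the equivalence of Ces\`aro and Abel boundedness for positive sequences. Since the paper states it will prove exactly this in Section 5, the cleanest route is to cite that forthcoming result; everything after that is routine given Theorem \ref{kreiss-h}.
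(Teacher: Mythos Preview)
Your proposal is correct and matches the paper's approach: the paper's proof consists of the single line ``$T$ is Kreiss bounded by Proposition \ref{positive}'' (the Section 5 result you anticipate), after which the growth estimate is immediate from Theorem \ref{kreiss-h}, and mean ergodicity follows from Ces\`aro boundedness plus $\|T^n\|/n\to 0$ on a reflexive space, exactly as you outline. Your optional self-contained route to Kreiss boundedness via Abel boundedness and lattice domination is also valid, though the paper's Proposition \ref{positive} actually argues slightly differently (Ces\`aro bounded $\Rightarrow$ strongly Ces\`aro bounded via $|\sum \gamma_k T^k x|\le \sum T^k|x|$, then SCB $\Rightarrow$ UKB $\Rightarrow$ Kreiss).
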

\begin{proof} $T$ is Kreiss bounded by  Proposition \ref{positive}.
\end{proof}

{\bf Remarks.} 1. Theorem \ref{kreiss-h} improves  \cite[Theorem 2.3]{BBMP}, where 
it is proved that $n^{-1}\|T^n\| \to 0$ when $T$ is {\it uniformly} Kreiss bounded. 
However, the arguments are similar, with some modifications. 

2. Theorem \ref{kreiss-h}  was proved independently by Bonilla and M\"uller \cite{BM}.

3. Nevanlinna \cite[Theorem 0.3 and Corollary 8.2]{Ne1} gave conditions on a Kreiss bounded 
operator in $H$ (that are always satisfied in the finite-dimensional case), 
which imply power-boundedness.

4. Taking $H:= \oplus_{N \ge 1} \mathbb C^N$ and using the construction of 
\cite{STW} on each summand, we can get for any $\varepsilon >0$ a Kreiss bounded operator $T$ 
on a Hilbert space with $ \|T^n\| \ge Cn^{1-\varepsilon}$ for every $n\ge 1$. This operator
is not positive on $H$ (identified with $\ell^2(\N)$).

5. The Example of \cite{KK} yields $T$ on $L^1$ which is absolutely Ces\`aro bounded,
hence uniformly Kreiss bounded, with $\|T^n\| \asymp n^{1-\varepsilon}$, $\varepsilon>0$ small.

6. Bonilla and M\"uller \cite{BM} constructed a uniformly Kreiss bounded $T$ on a Hilbert 
space with $\|T^n\| \asymp n^{1-\varepsilon}$, $\varepsilon>0$ small. Their $T$ is
actually a weighted shift on $\ell^2(\mathbb N)$ with non-negative weights, so by Proposition
\ref{positive} $T$ is even strongly Ces\`aro bounded. In fact, by \cite[Corollary 1]{Sh2},
every weighted shift on $\ell^2(\mathbb N)$ which is uniformly Kreiss bounded is strongly
Ces\`aro bounded. By \cite[Theorem 2.5]{BBMP} 
(or Theorem \ref{acb-csb} below), examples on $H$  with $\|T^n\| \ge c \sqrt{n}$
are not absolutely Ces\`aro bounded.

7. The examples show that the estimate for $\|T^n\|$ in  Theorem \ref{kreiss-h} is 
nearly optimal.

\begin{prop}
Let $T$ be strongly Ces\`aro bounded on a (real or complex) Hilbert space $H$. Then 
$\|T^n\|= O(n/\sqrt{\log n})$.
\end{prop}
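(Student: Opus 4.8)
The plan is to imitate the proof of Theorem~\ref{kreiss-h}, replacing the use of the Kreiss resolvent condition by the strong Ces\`aro boundedness condition in its equivalent form \eqref{scb2}. The key observation is that the only two ingredients needed in that proof are: (a) an estimate of the form $\sum_{n=0}^{N-1}\|T^nx\|^2 \le C N^2\|x\|^2$ for all $x$ and all $N$, which also holds for $T^*$; and (b) the bilinear dissection argument (Claim~4 of \cite{BBMP}, here \eqref{claim-BBMP}) together with the dyadic summation over $\ell$. Ingredient (b) is purely formal once (a) is in hand, so the real work is to derive (a) from strong Ces\`aro boundedness.

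First I would recall that, by Proposition~\ref{scb1}, strong Ces\`aro boundedness gives a constant $C>0$ with $\|\frac1n\sum_{k=0}^{n-1}\gamma_k T^k\|\le C$ for every unimodular sequence $(\gamma_k)$; equivalently, for every $x,y\in H$ and every $n$, $\frac1n\sum_{k=0}^{n-1}|\langle T^kx,y\rangle|\le C\|x\|\,\|y\|$. Fix $N\in\N$, $x\in H$, and apply this with $y=T^mx$ for each $0\le m\le N-1$: summing the resulting inequalities $\frac1N\sum_{k=0}^{N-1}|\langle T^kx,T^mx\rangle|\le C\|x\|\,\|T^mx\|$ over $m$ and using Cauchy--Schwarz on the right gives
$$
\frac1N\sum_{m=0}^{N-1}\sum_{k=0}^{N-1}|\langle T^kx,T^mx\rangle|
\le C\|x\|\sum_{m=0}^{N-1}\|T^mx\|
\le C\|x\|\, N^{1/2}\Big(\sum_{m=0}^{N-1}\|T^mx\|^2\Big)^{1/2}.
$$
On the other hand the left-hand side dominates $\frac1N\sum_{m=0}^{N-1}\langle T^mx,T^mx\rangle = \frac1N\sum_{m=0}^{N-1}\|T^mx\|^2$ (keeping only the diagonal terms $k=m$). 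Writing $A_N := \sum_{m=0}^{N-1}\|T^mx\|^2$, we obtain $A_N/N \le C\|x\|N^{1/2}A_N^{1/2}$, hence $A_N^{1/2}\le C\|x\|N^{3/2}$, i.e. $\sum_{m=0}^{N-1}\|T^mx\|^2\le C^2\|x\|^2 N^3$. This is weaker than \eqref{bound-KB} by one power of $N$, which would only give $\|T^n\|=O(n^{3/2}/\sqrt{\log n})$ — not good enough. So this naive diagonal extraction must be improved.

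The fix, which I expect to be the main obstacle, is to exploit the full strength of \eqref{scb2}: rather than pairing $T^kx$ against a single fixed $y$, one should pair against the whole partial orbit simultaneously. Concretely, for fixed $x$ choose scalars $\gamma_k = \overline{\langle T^kx, u\rangle}/|\langle T^kx,u\rangle|$ optimally and then optimize over unit vectors $u$; alternatively, apply \eqref{scb2} to $T^*$ (legitimate by Corollary~\ref{scb*}, so $T^*$ is also strongly Ces\`aro bounded with the same constant) and combine the estimate for $T$ and for $T^*$ exactly as in Theorem~\ref{kreiss-h}. I expect that the correct route is to prove directly the analogue of \eqref{bound-KB}, namely $\sum_{n=0}^{N-1}\|T^nx\|^2\le C N^2\|x\|^2$, by testing \eqref{scb2} against the vector $\sum_{k=0}^{N-1}\gamma_k T^{k}x$ normalized, which produces $\sum_k \|T^kx\|^2$ on the diagonal while the off-diagonal cross terms are controlled by the same sum via \eqref{scb2} applied a second time; a careful bookkeeping of the quadratic form, or an interpolation/duality step, should close the gap and yield the clean $N^2$ bound. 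Once \eqref{bound-KB} holds for both $T$ and $T^*$, the remainder of the argument is verbatim that of Theorem~\ref{kreiss-h}: derive \eqref{claim-BBMP}, sum over the $L\asymp\log N$ disjoint dyadic blocks, and conclude $L\,\|T^Nx\|^2 \le CN^2\|x\|^2$, i.e. $\|T^n\|=O(n/\sqrt{\log n})$.
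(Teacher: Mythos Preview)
Your proposal has a genuine gap, and you have also missed the shortest route to the result.

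In the complex case, the paper's proof is a single line: strong Ces\`aro boundedness implies uniform Kreiss boundedness (Corollary~\ref{scb-uk}), which in turn implies Kreiss boundedness, so Theorem~\ref{kreiss-h} applies directly. You are aware of Theorem~\ref{kreiss-h} and even propose to imitate its proof, but you never observe that SCB already sits above Kreiss boundedness in the hierarchy, so there is nothing to redo.

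Even granting your plan to re-derive \eqref{bound-KB} from scratch, your argument is incomplete. Your diagonal-extraction step only yields the $N^3$ bound, and the ``fix'' you sketch (testing against $\sum_k\gamma_kT^kx$, bookkeeping of a quadratic form, an unspecified interpolation/duality step) is not a proof. The actual fix is much simpler and you overlooked it: in \eqref{scb2} take the \emph{specific} unimodular sequence $\gamma_k=\gamma^k$ for $\gamma\in\TT$, square, and integrate over $\TT$. Orthogonality of $(\gamma^k)_k$ in $L^2(\TT)$ gives
\[
\int_{\TT}\Big\|\frac1N\sum_{k=0}^{N-1}\gamma^kT^kx\Big\|^2\,d\gamma
=\frac1{N^2}\sum_{k=0}^{N-1}\|T^kx\|^2\le C^2\|x\|^2,
\]
which is exactly \eqref{bound-KB}. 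This is of course just the UKB $\Rightarrow$ Kreiss step unwound; the point is that the special choice $\gamma_k=\gamma^k$ already extracts the full sum of squares, whereas your pairing against $y=T^mx$ wastes information.

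Finally, you do not address the real case at all. The paper handles it by complexifying $H$ and checking that $T_\C$ is again strongly Ces\`aro bounded (using the remark after Proposition~\ref{scb1} that it suffices to test \eqref{scb2} on $\gamma_k\in\{-1,1\}$), then applying the complex case. This step is short but not entirely trivial and should be included.
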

\begin{proof}
We first observe that when $H$ is a complex Hilbert space, $T$ is uniformly Kreiss bounded
by Corollary \ref{scb-uk}, so the result follows from Theorem \ref{kreiss-h}.
\smallskip

We now prove the real case. Let $H_{\mathbb C}= H\oplus iH$ be the complexification of $H$,
with the norm $\|x+iy\|^2=\|x\|^2+\|y\|^2$, which makes $H_\mathbb C$ a complex Hilbert space
\cite{MDW}. For a  bounded linear operator $S$ on $H$ we define $S_\mathbb C(x+iy) := Sx+iSy$.
Then $S_\mathbb C $ extends $S$ to $H_\mathbb C$, $\ \|S_\mathbb C \| =\|S\|$,  and
$(S^n)_\mathbb C= (S_\mathbb C)^n$.

Let $(\gamma_k)_{k \in \N}$ be a real sequence with $\gamma_k \in \{-1,1\}$. Then 
$$
\|\frac1n \sum_{k=0}^{n-1} \gamma_k (T_\mathbb C)^k \| =
\|\frac1n \sum_{k=0}^{n-1} \gamma_k T^k \|,
$$
so by Proposition \ref{scb1} and the remark following it, also $T_\mathbb C$ is strongly
Ces\`aro bounded. By the result for complex Hilbert spaces, 
$\|T^n\| =\|(T_\mathbb C)^n\| =O(n/\sqrt{\log n})$.
\end{proof}

\begin{theo} \label{acb-csb}
Let $T$ be an absolutely Ces\`aro bounded operator on a Hilbert space $H$. 
Then $T$ is Ces\`aro square bounded, with $\cs\le 8 \ac$. 
Consequently, there exists $\varepsilon\in (0,1/2)$, 
such that $\|T^n\|=O(n^{1/2-\varepsilon})$ with $\varepsilon\le 1/128\ac^2$.
\end{theo}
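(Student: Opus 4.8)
The goal splits into two parts: first show that absolute Ces\`aro boundedness in a Hilbert space self-improves to Ces\`aro square boundedness (with an explicit constant), and then feed this into Proposition \ref{p-abs-norm} (the $p=2$ case) to get the polynomial rate. The second part is essentially automatic once the first is established, since $\cs = K_{2\text{-ac}}$ and Proposition \ref{p-abs-norm} applied with $p=2$ yields $\|T^n\| = O(n^{1/2-\varepsilon})$ with $\varepsilon = 1/(2\cs^2)$; using $\cs \le 8\ac$ gives $\varepsilon \le 1/(128\ac^2)$ up to the precise bookkeeping, and $\varepsilon<1/2$ follows since $\cs \ge 1$ (take $n=1$). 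So the entire weight of the argument is in proving $\cs \le 8\ac$.

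For that first part the plan is to exploit the Hilbert space structure via an $L^2$--$L^1$ interpolation/square-function trick on the orbit. Fix $x \in H$ with $\|x\|=1$ and fix $n$; we want to bound $\frac1n\sum_{k=0}^{n-1}\|T^kx\|^2$. The idea is to write $\|T^kx\|^2 = \langle T^kx, T^kx\rangle$ and reorganize the sum using the semigroup identity $T^{k+j} = T^k T^j$, so that blocks of the orbit can be compared. More concretely, I would bound $\|T^Nx\|^2$ for a generic $N$ in terms of an average of $\|T^jx\|$ over a window, by noting $\|T^Nx\|^{1/2}\cdot(\text{something}) \le \sum \|T^{N-k}\|^{?}\dots$ — but $\|T^{N-k}\|$ is exactly what we are trying to control, so instead the cleaner route is the Cauchy--Schwarz-in-disguise estimate used already in Theorem \ref{kreiss-h}: for $0\le P<Q\le N$,
$$
(Q-P)^2 \|T^N x\|^2 = \Big(\sum_{k=P}^{Q-1} \langle T^{N-k}x, T^{*k}\tfrac{T^Nx}{\|T^Nx\|}\rangle\Big)^2 \le \Big(\sum_{k=P}^{Q-1}\|T^{N-k}x\|^2\Big)\Big(\sum_{k=P}^{Q-1}\|T^{*k}y\|^2\Big),
$$
with $y$ a norming vector; this converts a single large power into a sum of squares over a translated window. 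The trouble is that this requires $T^*$ to satisfy a square-sum bound, and absolute Ces\`aro boundedness of $T$ does \emph{not} pass to $T^*$. So this exact route (which works for Kreiss bounded operators, as in Theorem \ref{kreiss-h}) is not available, and a genuinely different argument is needed — this is the main obstacle.

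The approach I would actually pursue uses only the ACB hypothesis on $T$ itself, through a duality-free self-improvement. Start from the elementary inequality $\|T^kx\|^2 \le \|T^kx\|\cdot\|T^k\|\cdot\|x\|$ — no good, $\|T^k\|$ is unknown. Better: use that for $j\le k$, $\|T^kx\| = \|T^{k-j}T^jx\| \le \|T^{k-j}\|\,\|T^jx\|$, so $\|T^kx\|^2 \le \|T^{k-j}\|\,\|T^jx\|\,\|T^kx\|$; summing over $j$ from $0$ to $k-1$ and invoking $\sum_{j=0}^{k-1}\|T^jx\| \le \ac\,k\,\|x\|$ bounds $k\|T^kx\|^2$ by $\ac k \|x\| \max_{0\le j<k}\|T^{k-j}\|\,\|T^kx\|$ — still circular through $\|T^m\|$. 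The way out of the circularity is to only ever compare $\|T^kx\|$ against $\|T^jx\|$ for the \emph{same} vector $x$, never against operator norms. So: for fixed $x$, set $a_k = \|T^kx\|$. One has $a_{k+m} \le \|T^m\|a_k$ but also, for the specific vector $T^kx$, $\frac1m\sum_{i=0}^{m-1}\|T^i(T^kx)\| \le \ac\,a_k$, i.e. $\frac1m\sum_{i=0}^{m-1}a_{k+i} \le \ac\,a_k$. This is a multiplicative-type recursion on the sequence $(a_k)$ alone, and I would analyze it combinatorially: apply it with a dyadic choice of block lengths (as in the proof of Theorem \ref{kreiss-h}) to get that $a_{k+i}$ cannot stay large throughout a long window unless $a_k$ is correspondingly large, then sum $\sum_{k<n}a_k^2$ by grouping indices into dyadic blocks and using $\sum_{k<n}a_k \le \ac\,n$ on each block. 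Chaining the window inequality $O(\log n)$ times and summing the resulting geometric series should produce $\sum_{k<n}a_k^2 \le 64\,\ac^2\,n$, i.e. $\cs \le 8\ac$. The delicate points — and where the constant $8$ (and the restriction to Hilbert space, which I'd expect to enter through converting $\langle x^*, T^kx\rangle$-bounds into $\|T^kx\|^2$-bounds, since in Hilbert space $\|T^kx\|^2 = \langle T^kx, T^kx\rangle$ with $T^kx$ itself available as the test functional) get pinned down — are the exact block lengths and the number of chaining steps; I would expect the cleanest writeup to mirror the dyadic summation already appearing in Theorem \ref{kreiss-h}, with $T^*$ replaced throughout by a direct estimate on $\langle T^{N-k}x, T^kx\rangle$ available only because the space is a Hilbert space. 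Once $\cs \le 8\ac$ is in hand, Proposition \ref{p-abs-norm} with $p=2$ finishes the theorem.
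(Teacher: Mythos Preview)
Your reduction of the norm estimate to Proposition~\ref{p-abs-norm} with $p=2$ is correct, and you are right that the Cauchy--Schwarz/$T^*$ route from Theorem~\ref{kreiss-h} is blocked because ACB does not pass to $T^*$. But your fallback plan---analyzing only the scalar sequence $a_k=\|T^kx\|$ through the window inequality $\frac1m\sum_{i=0}^{m-1}a_{k+i}\le \ac\, a_k$---cannot succeed, and this is a genuine gap, not a matter of bookkeeping.

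The reason is that the window inequality is a consequence of absolute Ces\`aro boundedness in \emph{any} Banach space: it uses nothing about $H$. If a purely scalar chaining argument on $(a_k)$ produced $\sum_{k<n}a_k^2\le Cn$ with $C=C(\ac)$, you would have proved that every ACB operator on every Banach space is Ces\`aro square bounded. This is false: the operators of \cite[Theorem~2.1]{BBMP} on $\ell^p$ with $1\le p<2$ and $1/2<\alpha<1/p$ are absolutely Ces\`aro bounded with $\|T^n\|=(n+1)^\alpha$, hence not Ces\`aro square bounded (which would force $\|T^n\|=O(\sqrt n)$). Concretely, for such $T$ and suitable unit vectors $x$ the sequences $a_k=\|T^kx\|$ all satisfy your window inequality with a fixed constant, yet no uniform bound $\frac1n\sum_{k<n}a_k^2\le C$ holds. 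So the Hilbert space structure must enter in a way your outline does not provide; writing $\|T^kx\|^2=\langle T^kx,T^kx\rangle$ is available in any Banach space via duality and gives no extra leverage.

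The paper's proof supplies the missing Hilbert-space ingredient through a randomization (Parseval) trick. One sets
\[
y_{N,\gamma}=\sum_{k=1}^{2^N}\frac{\gamma^k T^kx}{\|T^kx\|+\varepsilon},\qquad u_{N,\gamma}=\sum_{j=0}^{2^N-1}\gamma^jT^jy_{N,\gamma},
\]
applies ACB to get $\|u_{N,\gamma}\|\le \ac\,2^N\|y_{N,\gamma}\|$, and then integrates $\|u_{N,\gamma}\|^2$ over $\gamma\in\mathbb T$. Orthogonality of $(\gamma^n)_n$ in $L^2(\mathbb T)$---this is where $H$ is used---turns $\int_{\mathbb T}\|u_{N,\gamma}\|^2\,d\gamma$ into a sum of terms $\|T^jx\|^2\big(\sum_k 1/(\|T^kx\|+\varepsilon)\big)^2$, while $\int_{\mathbb T}\|y_{N,\gamma}\|^2\,d\gamma\le 2^N$. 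A Cauchy--Schwarz lower bound on $\sum_k 1/(\|T^kx\|+\varepsilon)$ (using ACB once more) then yields $\sum_{2^{N-1}<j\le 2^N}\|T^jx\|^2\le 16\ac^4\,2^N$, and summing the dyadic blocks gives $\cs\le 8\ac$. The essential idea you are missing is this passage from $\|\cdot\|$ to $\|\cdot\|^2$ via $\int_{\mathbb T}\|\sum_j\gamma^j v_j\|^2\,d\gamma=\sum_j\|v_j\|^2$, which has no Banach-space analogue.
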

\begin{proof} The norm estimate will follow from Proposition \ref{p-abs-norm} with $p=2$.

We first  assume that $H$ is complex.

 Let $x\in H$ with $\|x\|=1$. Let $\varepsilon \in (0,1)$.
For $N \in \N$ and $\gamma \in \mathbb T$ define
$$
y_{N,\gamma }:=  \sum_{k=1}^{2^N}\frac{\gamma^k T^k x}{\|T^kx\|+\varepsilon}
$$
and 
$$
u_{N,\gamma}:= \sum_{j=0}^{2^N-1} \gamma^j T ^j y_{N,\gamma}\, . 
$$
Since $T$ is absolutely Ces\`aro bounded, we have $\|u_{N,\gamma}\|\le \ac2^N\|y_{N,\gamma}\|$.

Expanding $\langle y_{N,\gamma},y_{N,\gamma}\rangle$ and using orthogonality, we obtain
$\int_{\mathbb T} \|y_{N,\gamma}\|^2 d\gamma\le  2^N$.  
Consequently, $\int_{\mathbb T} \|u_{N,\gamma}\|^2d\gamma \le \ac^2 2^{3N}$. 

\smallskip

Notice also that 
\begin{equation}\label{u_m}
u_{N,\gamma}=\sum_{j=1}^{2^N}\gamma^j T^j x\sum_{k=1}^j \frac1{\|T^kx\|+\varepsilon}+
\sum_{j=2^N+1}^{2^{N+1}-1}\gamma^j T^j x\sum_{k=j-2^N}
^{2^N} \frac1{\|T^kx\|+\varepsilon}\, .
\end{equation}
\smallskip

Now, expanding $\langle u_{N,\gamma},u_{N,\gamma}\rangle$ and using orthogonality, we obtain
$$
\ac^2 2^{3N} \ge \int_{\mathbb T} \|u_{N,\gamma}\|^2 d\gamma = 
$$
\begin{equation} \label{minorize}
 \sum_{j=1}^{2^N} \|T^jx\|^2\Big(\sum_{k=1}^j \frac1{\|T^kx\|+\varepsilon }\Big)^2+
\sum_{j=2^N+1}^{2^{N+1}-1}
 \|T^jx\|^2\Big(\sum_{k=j-2^N} ^{2^N} \frac1{\|T^kx\|+\varepsilon}\Big)^2
\ge 
\end{equation}
$$
 \sum_{j=2^{N-1}+1}^{2^N} \|T^jx\|^2\Big(\sum_{k=1}^{2^{N-1}} \frac1{\|T^kx\|+\varepsilon}\Big)^2 \, ,
$$
where we minorized the second half of the first sum in \eqref{minorize}.
Notice that (using $\|x\|=1$),
\begin{equation} \label{reciprocals}
2^{N-1}=\sum_{k=1}^{2^N-1}\frac{\sqrt{\|T^kx\|+\varepsilon}}{\sqrt{\|T^kx\|+\varepsilon}}\le 
\Big(\sum_{k=1}^{2^N-1}(\|T^kx\|+\varepsilon)\Big)^{1/2}  \Big(\sum_{k=1}^{2^N-1} \frac1{\|T^kx\|+\varepsilon}\Big)^{1/2}
 \le  
\end{equation}
$$
2^{N/2}\sqrt{\ac+\varepsilon}\Big(\sum_{k=1}^{2^N-1} \frac1{\|T^kx\|+
\varepsilon}\Big)^{1/2} \, .
$$
Hence 
$$
\sum_{k=1}^{2^N-1} \frac1{\|T^kx\|+\varepsilon}\ge \frac{2^{N-2}}{(\ac+\varepsilon)}\, .
$$
Finally we infer that 
$$
\ac^22^{3N} \ge \int_{\mathbb T} \|u_{N,\gamma}\|^2 d\gamma \ge 
\frac{2^{2N-4}}{(\ac +\varepsilon)^2}\sum_{j=2^{N-1}+1}^{2^N}\|T^jx\|^2\, .
$$
Letting $\varepsilon\to 0$ we obtain
$$
\sum_{j=2^{N-1}+1}^{2^N}\|T^j x\|^2\le 2^{N+4}\ac^4 \, .
$$

Let $n\in \N$. Let $N\ge 1$ be such that $2^{N-1}\le n\le 2^{N}-1$. Summing the blocks we get
\begin{equation} \label{final-square}
\sum_{k=0}^{n-1}\|T^kx\|^2\le \sum_{j=0}^{N}2^{j+4}\ac^4 \le 64n\ac^4\, .
\end{equation}
For general $x \ne 0$, replace $x$ in (\ref{final-square}) by $\frac x{\|x\|}$ to obtain the result.
\medskip

When $H$ is a real Hilbert space, we show that on the complexification $H_\mathbb C$ the operator
$T_\mathbb C$ is absolutely Ces\`aro bounded:
$$
\frac1n \sum_{k=0}^{N-1} \|(T_\mathbb C)^k (x+iy) \| \le
\frac1n \sum_{k=0}^{N-1} [\|(T_\mathbb C^kx\| + \|T_\mathbb C^ky \|] \le C(\|x\|+\|y\|) \le
\sqrt{2} C \sqrt{\|x\|^2+\|y\|^2}.
$$
We now apply the result from the complex case: $T_\mathbb C$ is Ces\`aro square bounded, so 
trivially so is $T$.
\end{proof}

\noindent {\bf Remarks.} 1. The Theorem gives a rate in Theorem 2.5 of \cite{BBMP},
 where it is proved that $\|T^n\| =o(n^{1/2})$.

2. The Theorem answers Question 2.3 of \cite{BBMP}.
\medskip

\begin{theo}\label{strong-kreiss}
Let $T$ be a strongly Kreiss bounded operator on a complex Hilbert space $H$. 
Then there exists $\kappa>0$ (which depends on $T$), such that $\|T^n\|=O((\log n)^{\kappa})$.
\end{theo}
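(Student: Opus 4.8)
The plan is to mimic the structure used for the Kreiss case (Theorem \ref{kreiss-h}) and the absolutely Ces\`aro case (Theorem \ref{acb-csb}): first extract a quantitative bound on averages of $\|T^n x\|^2$ from the resolvent hypothesis, then upgrade it to a pointwise bound on $\|T^N\|$ by a dyadic splitting argument that exploits that the bound holds for $T^*$ as well. By Nevanlinna's characterization \eqref{SKR2}, strong Kreiss boundedness gives $\|\e^{zT}\| \le M\e^{|z|}$ for all $z \in \C$, and the same holds for $T^*$. Fix $N \in \N$ and integrate $\|\e^{z\gamma T}x\|^2 \le M^2 \e^{2|z|}\|x\|^2$ over $\gamma \in \mathbb T$ with $z = N$ (or a suitable multiple of $N$), which by orthogonality of the characters yields
\begin{equation*}
\sum_{k \ge 0} \frac{N^{2k}}{(k!)^2}\,\|T^k x\|^2 \le M^2 \e^{2N}\|x\|^2 .
\end{equation*}
The point is that $N^{2k}/(k!)^2$ is, by Stirling (Lemma \ref{stirling} with $d$ a suitable constant), bounded below by a constant multiple of $\e^{2N}/N$ for all $k$ in a window $|k-N| \le c\sqrt N$. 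Restricting the sum to that window gives
\begin{equation*}
\sum_{k\,:\,|k-N| \le c\sqrt N} \|T^k x\|^2 \le C N \|x\|^2 ,
\end{equation*}
i.e. the averages of $\|T^k x\|^2$ over windows of length $\sqrt N$ centered at $N$ are $O(\sqrt N)$ — this is the strong-Kreiss analogue of \eqref{bound-KB}, and it is much stronger than the Kreiss bound.

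The second step is a ``zooming'' iteration. Since the same window bound holds for $T^*$, the bilinear trick from the proof of Theorem \ref{kreiss-h} (writing $|\langle T^N x, y\rangle|$ as an average of $|\langle T^{N-k}x, T^{*k}y\rangle|$ over $k$ in a sub-window and applying Cauchy--Schwarz) gives an inequality of the form
\begin{equation*}
\|T^N x\|^2 \le C' \frac{Q-P}{(Q-P)^2}\sum_{k=P}^{Q-1} \|T^{N-k}x\|^2 \cdot \frac{Q}{Q-P}
\end{equation*}
— more precisely, $\|T^Nx\|^2$ is controlled by the average of $\|T^{N-k}x\|^2$ over any sub-window $[P,Q) \subset [0, c\sqrt N)$. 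Combining with the window bound at scale $N$, one gets $\|T^Nx\|^2 \le C'' \sqrt N$ as a first estimate; but then one can feed this back: having $\|T^m\| = O(m^{\beta})$ for all $m \le N$, one revisits the window-average bound $\sum_{|k-N|\le c\sqrt N}\|T^k x\|^2 \le CN$, from which, because all but a vanishing fraction of the $\sqrt N$ terms must then be small, one extracts an improved exponent. Iterating this self-improvement $\beta \mapsto \beta/2$ (roughly) drives the polynomial exponent to $0$; what survives is a polylogarithmic factor. The bookkeeping is analogous to Lemma \ref{p-abs-lem} but with the square-root scaling of the window replacing the linear scaling, so each round roughly halves the exponent and $\log N$ rounds suffice, producing $\|T^N\| = O((\log N)^\kappa)$ for some $\kappa$ depending on $M$.

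The main obstacle is the second step: making the self-improvement rigorous and tracking how $\kappa$ depends on $M$. One has to be careful that the window-average estimate, which has the ``wrong'' normalization $\sqrt N$ rather than a constant, genuinely forces smallness of $\|T^k\|$ for most $k$ near $N$ — this requires that the exceptional set of $k$ where $\|T^k x\|$ is large cannot itself be too large, which is where a pigeonhole/iteration (as in Lemma \ref{p-abs-lem}, or a direct dyadic decomposition of $[2^{N-1}, 2^N)$ as in Theorem \ref{acb-csb}) is needed. A clean way to organize it: set $v_m := \sup_{\|x\|=1}\|T^m x\|^2$, show $v_N \le C\, v_{N - \lceil\sqrt N\rceil}$ up to a polylog loss by the bilinear argument, and then unfold this recursion — since $N \mapsto N - \sqrt N$ reaches $1$ in about $2\sqrt N$ steps, naive unfolding is too lossy, so instead one telescopes the window bound $\sum_{j}\|T^{2^j}x\|^2$-style over a \emph{dyadic} sequence of scales, at each of which the window covers a constant fraction of the previous scale. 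This dyadic-in-$\log$ organization is exactly what converts the $\sqrt{\cdot}$ window scaling into a $\log$ count of blocks and yields the stated $(\log n)^\kappa$ bound.
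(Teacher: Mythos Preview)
Your first step---integrating $\|e^{N\gamma T}x\|^2$ over $\gamma\in\TT$ to obtain the window bound $\sum_{|k-N|\le c\sqrt N}\|T^kx\|^2\le CN\|x\|^2$ via Lemma~\ref{stirling}---is exactly right and matches the paper. You are also right that an iterative self-improvement is required, and that the final conversion from a square-function bound to a pointwise norm bound uses the bilinear $T/T^*$ trick from Theorem~\ref{kreiss-h}.

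The gap is in the self-improvement mechanism. Your proposal is to iterate: (a) use the bilinear trick to pass from a sum bound $\sum_{k\le N}\|T^kx\|^2\le CN^\beta$ to a pointwise bound $\|T^N\|^2\le C'N^{\beta'}$, then (b) feed $\|T^m\|=O(m^{\beta'/2})$ back into the sum to get a new exponent. But step (b) is lossy: a pointwise bound $\|T^k\|^2\le Ck^{\beta'}$ only yields $\sum_{k\le M}\|T^kx\|^2\le C'M^{\beta'+1}$, and a direct calculation shows the composite map on the sum exponent is increasing (e.g.\ starting from $\beta=3/2$ it moves to $7/4$), so the iteration goes the wrong way. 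Your alternative recursion $v_N\le C\,v_{N-\lceil\sqrt N\rceil}$ does not follow from the bilinear inequality either: the right-hand side of that inequality is a \emph{sum} of $\|T^{N-k}x\|^2$, not a single term, and there is no pigeonhole that extracts a single comparable term without already knowing what you want to prove.

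The paper's mechanism for the bootstrap is different and is the missing idea. Instead of using the bilinear $T/T^*$ trick for the iteration, one applies the exponential estimate \eqref{SKR2} not to $x$ but to the short test vector $y_{N,\gamma}:=\sum_{1\le n\le 4\sqrt N}\gamma^n T^n x$, so that $\int_{\TT}\|y_{N,\gamma}\|^2\,d\gamma=\sum_{n\le 4\sqrt N}\|T^nx\|^2\le C(4\sqrt N)^\beta= C'N^{\beta/2}$ by the inductive sum hypothesis. Expanding $e^{N\gamma T}y_{N,\gamma}$, integrating over $\gamma$, and applying Stirling to the resulting coefficients yields an \emph{improved window bound} $\sum_{N-2\sqrt N\le n\le N}\|T^nx\|^2\le C''N^{\beta/2}$. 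Combined with Lemma~\ref{bound} (window $N^\alpha\Rightarrow$ sum $N^{\alpha+1/2}$), the iteration on the sum exponent is $\beta\mapsto(\beta+1)/2$, which converges to $1$; choosing the number of iterations $K\sim\log N/\log\log N$ gives the $(\log N)^\kappa$ bound. The bilinear $T/T^*$ step is used only once, at the very end, to pass from the final sum estimate $\sum_{k\le N}\|T^kx\|^2\le SN(\log N)^\kappa$ (valid for both $T$ and $T^*$) to the pointwise norm bound.
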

\noindent {\bf Remark.} The theorem gives another proof that for $p=2$ the operator in 
Theorem \ref{example} is not strongly Kreiss bounded. In the course of the proof, we 
obtain that $\sum_{n=0}^{N-1} \|T^nx\|^2\le C_\kappa N(\log (N+1))^\kappa\|x\|^2$ for 
every $x\in H$, with the $\kappa>0$ appearing in the theorem.
\medskip

\noindent{\it Proof.} Let $z>0$. By (\ref{SKR2}), for every $x\in H$ and every complex number 
$\gamma$ with $|\gamma|=1$, we have $\|{\rm e}^{z\gamma T}x\|^2 \le M^2 {\rm e}^{2z}\|x\|^2$. 
Expanding the left hand side as a double series of scalar products, integrating over 
$\mathbb T$ with respect to $\gamma$ and using orthogonality of $(\gamma^n)_n$, we infer that 
$$ 
\sum_{n\in \N}\frac{z^{2n}}{(n!)^2}\|T^n x\|^2\le M^2 {\rm e}^{2z} \|x\|^2\, .
$$
Let $N\in \N$ and $d>0$. Putting $z=N$ and applying Lemma \ref{stirling}, we obtain that
$$
C^2 {\rm e}^{-2d^2}\frac{{\rm e}^{2N}}{N}\sum_{N-d\sqrt N\le n\le N} \|T^nx\|^2\le 
M^2 {\rm e}^{2N} \|x\|^2\, ,
$$
with $C=C(d)$. Hence, 
\begin{equation}\label{strong}
\sum_{N-d\sqrt N\le n\le N} \|T^nx\|^2\le \frac{M^2 {\rm e}^{2d^2}}{C^2} N \|x\|^2\, .
\end{equation}

We will complete the proof after the following three lemmas.

\begin{lem}\label{bound}
Let $T$ be an operator on a Banach space $X$.
Let $\alpha\in (0,1]$  and assume that there exists $C\ge 1$ such that for every integer 
$N \ge 4$ and every $x\in X$, $\sum_{N-2\sqrt N\le n\le N} \|T^nx\|^2\le C N^\alpha \|x\|^2$. 
Then for every integer $N\in \N$, 
$\sum_{k=1}^N\|T^kx\|^2\le 8C (1+\|T\|^2)N^{(2\alpha +1)/2}\|x\|^2$.
\end{lem}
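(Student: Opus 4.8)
The plan is to bound $\sum_{k=1}^N \|T^k x\|^2$ by splitting the range $\{1,\dots,N\}$ into blocks on which the hypothesis $\sum_{N'-2\sqrt{N'}\le n\le N'}\|T^nx\|^2\le C(N')^\alpha\|x\|^2$ can be applied. The natural choice is a sequence of \emph{landmark} points $N_0<N_1<N_2<\cdots$ defined by a recursion of the form $N_{j+1}=N_j-\lceil 2\sqrt{N_j}\rceil$ (roughly speaking), starting from $N_0=N$ and stopping once we drop below, say, $4$ or $16$. Each interval $(N_{j+1},N_j]$ has length about $2\sqrt{N_j}$ and is contained in a window of the type appearing in the hypothesis, so $\sum_{N_{j+1}<k\le N_j}\|T^kx\|^2\le C N_j^\alpha\|x\|^2$. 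Summing over $j$ gives $\sum_{k=1}^N\|T^kx\|^2\le C\|x\|^2\sum_j N_j^\alpha + (\text{small boundary terms involving }\|T\|^2)$, where the last term accounts for the finitely many indices $k\le 16$ that the recursion never covers, each bounded crudely by $\|T\|^{2k}\le\|T\|^{2}$ up to constants (here one uses $1+\|T\|^2$ to absorb the case $\|T\|\le 1$ as well).

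The remaining task is then purely numerical: estimate $\sum_j N_j^\alpha$. Since $N_j$ decreases by steps of size $\asymp\sqrt{N_j}$, the number of landmarks before reaching a fixed level $a$ is $\asymp\sqrt{N}$, and more precisely the sequence $N_j$ behaves like the solution of the ODE $\dot y=-2\sqrt{y}$, i.e. $N_j\approx(\sqrt N-j)^2$ for $0\le j\lesssim\sqrt N$. Hence $\sum_j N_j^\alpha\approx\sum_{0\le j\le\sqrt N}(\sqrt N-j)^{2\alpha}\approx\int_0^{\sqrt N}t^{2\alpha}\,dt\asymp N^{(2\alpha+1)/2}$, which is exactly the asserted order. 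To make this rigorous without the ODE heuristic I would instead argue directly by telescoping: one shows $N_j^{(2\alpha+1)/2}$ differences dominate the block sums, or more simply compares $\sum_j C N_j^\alpha$ with an integral $\int_0^N t^{\alpha}\frac{dt}{\sqrt t}$ via the substitution induced by the step sizes $N_j-N_{j+1}\asymp\sqrt{N_j}$, namely $\sum_j N_j^\alpha\le \sum_j N_j^{\alpha-1/2}\cdot\frac{N_j-N_{j+1}}{c}\le \frac1c\int_0^N t^{\alpha-1/2}\,dt=\frac{1}{c(\alpha+1/2)}N^{\alpha+1/2}$.

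Tracking constants through this comparison, together with the factor $C$ from the hypothesis and the factor $1+\|T\|^2$ from the boundary terms, should yield the stated bound $\sum_{k=1}^N\|T^kx\|^2\le 8C(1+\|T\|^2)N^{(2\alpha+1)/2}\|x\|^2$; the constant $8$ is generous enough to absorb the rounding in $\lceil 2\sqrt{N_j}\rceil$, the overlap of consecutive windows, and the small-$N$ cases $N\le 16$ handled separately. The main obstacle I anticipate is not conceptual but bookkeeping: one must choose the recursion for $N_j$ so that every block $(N_{j+1},N_j]$ genuinely sits inside an admissible window $\{N'-2\sqrt{N'}\le n\le N'\}$ for some integer $N'\le N$ (taking $N'=N_j$ works provided $N_{j+1}\ge N_j-2\sqrt{N_j}$, which forces the step to be $\le 2\sqrt{N_j}$, not $\lceil 2\sqrt{N_j}\rceil$ — so one should take the step to be $\lfloor 2\sqrt{N_j}\rfloor$ or handle the at most one leftover index per block), and simultaneously ensure the number of blocks and the sum $\sum N_j^\alpha$ stay controlled. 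Verifying that the explicit constant $8$ survives all these adjustments is the only delicate point.
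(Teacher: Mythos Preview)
Your approach is correct and would work, but the paper takes a cleaner route that sidesteps all the bookkeeping you anticipate. Instead of defining landmarks recursively by $N_{j+1}=N_j-\lfloor 2\sqrt{N_j}\rfloor$ and then comparing $N_j$ to the ODE solution $(\sqrt N-j)^2$, the paper simply \emph{uses perfect squares as the landmarks}. One decomposes $\{1,\dots,M^2\}=\{1\}\cup\bigcup_{k=1}^{M-1}(k^2,(k+1)^2]$; since $(k+1)^2-2\sqrt{(k+1)^2}=k^2-1<k^2+1$, each block $(k^2,(k+1)^2]$ sits inside the admissible window for $N'=(k+1)^2\ge 4$, and the hypothesis gives $\sum_{n=k^2+1}^{(k+1)^2}\|T^nx\|^2\le C(k+1)^{2\alpha}\|x\|^2$ with no rounding or overlap to manage. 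The remaining sum $\sum_{j=2}^{M}j^{2\alpha}\le M\cdot M^{2\alpha}=M^{2\alpha+1}$ is trivial, the single boundary term is $\|Tx\|^2\le\|T\|^2\|x\|^2$, and for arbitrary $N$ one takes $M=[\sqrt N]+1\le 2\sqrt N$, so the constant $2^{2\alpha+1}\le 8$ drops out immediately.

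Your recursion is essentially the inverse of this---you yourself note $N_j\approx(\sqrt N-j)^2$---but going \emph{forward} along squares rather than \emph{backward} from $N$ makes everything explicit: no integral comparison for $\sum_j N_j^\alpha$, no floor/ceiling ambiguity, no separate treatment of a range $k\le 16$, and the constant tracking becomes a two-line computation rather than the delicate verification you flag at the end.
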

\begin{proof} For every integer $M\ge 2$
$$
\sum_{n=1}^{M^2}\|T^nx\|^2=\|Tx\|^2 +  \sum_{k=1}^{M-1} \sum_{n=k^2+1}^{(k+1)^2} \|T^nx\|^2\, .
$$

Let $M\in \N$. Applying our assumption with $N\in \{2^2,\ldots , M^2\}$, we obtain that 
for every $x\in H$,
$$
\sum_{n=1}^{M^2}\|T^nx\|^2 \le \|T\|^2\|x\|^2+C \sum_{k=2}^M k^{2\alpha} \|x\|^2\le  
C(1+\|T\|^2)M^{2\alpha +1}\|x\|^2\, .
$$
Hence, for every $N\in \N$ and every $x\in X$,
\begin{equation*}
\sum_{n=1}^N  \|T^nx\|^2\le C(1+\|T\|^2)([\sqrt {N}]+1)^{2\alpha +1}\|x\|^2 \le 
C (1+\|T\|^2) 2^{2\alpha+1}N^{(2\alpha +1)/2}\|x\|^2\, .
\end{equation*}
Note that by the assumption, $C$ depends on $\alpha$ (and $T$).
\end{proof}

\begin{lem}\label{bound-2}
Let $T$ be strongly Kreiss bounded on $H$. Let $\beta\in(1,3/2]$. Assume that there exists $C>0$ 
such that for every $N\in \N$ and every $x\in H$, 
$\sum_{ n=1}^N \|T^nx\|^2\le CN^\beta \|x\|^2$. Then, there exists $D>0$ (independent of 
C and $\beta$) such that for every integer $N\ge 4$ and every $x\in H$, 
$\sum_{N-2\sqrt N\le n\le N}\|T^kx\|^2\le CD N^{\beta /2}\|x\|^2$, and 
\begin{equation} \label{Bound-2}
\sum_{ n=1}^N \|T^nx\|^2\le
 8C D(1+\|T \|^2)N^{(\beta+1)/2} \|x\|^2.
\end{equation}
\end{lem}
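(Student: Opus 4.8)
The plan is to get the full-sum estimate \eqref{Bound-2} as a formal consequence of the window estimate $\sum_{N-2\sqrt N\le k\le N}\|T^kx\|^2\le CDN^{\beta/2}\|x\|^2$: applying Lemma \ref{bound} with $\alpha=\beta/2$ — note $\beta/2\in(\tfrac12,\tfrac34]\subset(0,1]$, so its hypothesis holds — produces exponent $(2\alpha+1)/2=(\beta+1)/2$ and the constant $8CD(1+\|T\|^2)$, which is exactly \eqref{Bound-2}. So everything reduces to the window estimate, and that is where I would put the work.

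For the window estimate I would first record two available facts. (i) $T^*$ is again strongly Kreiss bounded with the same constant $M$, since $\|{\rm e}^{zT^*}\|=\|({\rm e}^{\bar zT})^*\|=\|{\rm e}^{\bar zT}\|\le M{\rm e}^{|z|}$; so anything proved for $T$ is available for $T^*$ as well. (ii) The quadratic form of \eqref{SKR2} already used in the proof of Theorem \ref{strong-kreiss}: integrating $\|{\rm e}^{z\gamma T}x\|^2\le M^2{\rm e}^{2z}\|x\|^2$ over $\gamma\in\mathbb T$ and using orthogonality of $(\gamma^n)_n$ gives $\sum_{n\ge0}\frac{z^{2n}}{(n!)^2}\|T^nx\|^2\le M^2{\rm e}^{2z}\|x\|^2$ for every $z>0$. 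Applying (ii) to $T^mx$ in place of $x$ and, via Lemma \ref{stirling}, keeping only the block of indices $n$ with $|n-(N-m)|\le 2\sqrt{N-m}$, I obtain for each $m<N$ a shift-and-block inequality $\sum_{|j-N|\le 2\sqrt{N-m}}\|T^jx\|^2\le C_1(N-m)\|T^mx\|^2$, with $C_1$ depending only on $M$; by the Cauchy–Schwarz step that produced \eqref{claim-BBMP}, now run on the $T^*$-side, one also has a Claim-4–type inequality $\frac{(Q-P)^2}{Q^{\tau}}\|T^Nx\|^2\le C'\sum_{n=N-Q+1}^{N-P}\|T^nx\|^2$ for $0\le P<Q\le N$, with $\tau$ a full-sum exponent valid for $T^*$ (one may take $\tau=3/2$ from \eqref{strong} and Lemma \ref{bound}, or $\tau=\beta$ if the iteration is run keeping $T$ and $T^*$ abreast).

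The heart of the proof is then to combine these inequalities with the hypothesis $\sum_{k=1}^{L}\|T^kx\|^2\le CL^\beta\|x\|^2$ through a Cauchy–Schwarz / summation-by-parts argument over a window of indices sitting just below $[N-2\sqrt N,N]$, in the spirit of \eqref{reciprocals}–\eqref{final-square} in Theorem \ref{acb-csb} and of the dyadic passage in Theorem \ref{kreiss-h}. The mechanism should be: the quadratic estimate (for $T$ and for $T^*$) bounds a long weighted sum of $\|T^jx\|^2$, the hypothesis says that the average of $\|T^jx\|^2$ over a long initial interval is small, and the squaring in Cauchy–Schwarz is what converts an exponent $\beta$ over a long interval into an exponent $\beta/2$ over the short window. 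Normalizing $x$ then gives $\sum_{N-2\sqrt N\le k\le N}\|T^kx\|^2\le CDN^{\beta/2}\|x\|^2$, with $D$ depending only on $M$ and the universal constant of Lemma \ref{stirling}, hence on neither $C$ nor $\beta$.

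The step I expect to be the main obstacle is precisely this balancing: arranging the Cauchy–Schwarz, the window, and the weights so that the $N^{1/2}$ factors from the window length and from Stirling's formula cancel against the square-root gain from the hypothesis — yielding $\beta/2$ rather than $\beta$ or $(\beta+1)/2$ — while keeping $D$ genuinely independent of $C$ and $\beta$. Once the window estimate is secured, the reduction to Lemma \ref{bound} and the bookkeeping with $T^*$ are routine.
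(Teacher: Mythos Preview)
Your reduction of \eqref{Bound-2} to the window estimate via Lemma \ref{bound} with $\alpha=\beta/2$ is correct and is exactly what the paper does. The gap is in the window estimate itself: the mechanisms you list (the shift-and-block inequality $\sum_{|j-N|\le 2\sqrt{N-m}}\|T^jx\|^2\le C_1(N-m)\|T^mx\|^2$, the Claim-4 inequality for $T^*$, a Cauchy--Schwarz balancing) do not visibly combine to produce the exponent $\beta/2$. Your shift-and-block inequality, for instance, only covers the full window $[N-2\sqrt N,N]$ when $m=0$, and then it just reproduces \eqref{strong}; for $m>0$ the windows shrink and the right-hand side involves $\|T^mx\|^2$, which you have no pointwise control over. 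You yourself flag this ``balancing'' as the main obstacle, and indeed it is: there is no Cauchy--Schwarz step here that halves the exponent.

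The paper's device is different and much more direct; no $T^*$ is used in this lemma. One forms the \emph{short} test vector
\[
y_{N,\gamma}:=\sum_{1\le n\le 4\sqrt N}\gamma^n T^n x,
\]
so that, by orthogonality and the hypothesis applied to the interval $[1,4\sqrt N]$, one has $\int_{\mathbb T}\|y_{N,\gamma}\|^2\,d\gamma\le C(4\sqrt N)^\beta=4^\beta C N^{\beta/2}$. \emph{This} is where $\beta$ becomes $\beta/2$: not through Cauchy--Schwarz, but because the hypothesis is invoked on an interval of length $\sim\sqrt N$. One then sets $w_{N,\gamma}:={\rm e}^{N\gamma T}y_{N,\gamma}$; strong Kreiss boundedness gives $\int_{\mathbb T}\|w_{N,\gamma}\|^2\,d\gamma\le 4^\beta M^2 C\,{\rm e}^{2N}N^{\beta/2}$. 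On the other hand, expanding $w_{N,\gamma}$ in powers of $\gamma$ and integrating, each coefficient $\|T^nx\|^2$ with $N-2\sqrt N\le n\le N$ carries a weight $\big(\sum_{n-4\sqrt N\le k\le n}\frac{N^k}{k!}\big)^2$, which Lemma \ref{stirling} (with $d=2$) bounds below by a constant times ${\rm e}^{2N}$. Comparing the two bounds yields the window estimate with $D$ depending only on $M$ and the Stirling constant. The ``squaring'' you were looking for is simply the choice of a $\sqrt N$-length partial sum as the input to the exponential.
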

\begin{proof} Let $x\in H$ with $\|x\|=1$. Let $N\in \N$ and $\gamma\in \C$ with $|\gamma|=1$. 
Set $y_{N,\gamma}:=\sum_{1\le n\le 4\sqrt N} \gamma^n T^n x$ and 
$w_{n,\gamma}:=\sum_{k\ge 0} \frac{\gamma^k}{k!}N^k T^ky_{N,\gamma}$.
\medskip

\noindent
By our assumptions, $\int_{|\gamma|=1} \|y_{N,\gamma}\|^2 \le 4^\beta CN^{\beta/2}  \|x\|^2$, 
and (\ref{SKR2}) yields $\|w_{N,\gamma}\|\le M {\rm e}^{N}\|y_{N,\gamma}\|$.  Hence, 
$$
\int_{\mathbb T } \|w_{N,\gamma}\|^2d\gamma\le 4^\beta M^2 C{\rm e}^{2N}N^{\beta/2} \|x\|^2 \, .
$$
\medskip

On the other hand,
\begin{gather*}
w_{N,\gamma}= \sum_{k\ge 0}\sum_{k+1 \le n\le k+4\sqrt N} \frac{N^k}{k!}\gamma^nT^n x = \\ 
\sum_{1\le n\le 4\sqrt N}\gamma^n T^n x\sum_{k=0}^n 
\frac{N^k}{k!} +\sum_{n\ge 4\sqrt N+1}\gamma^n T^n x\sum_{n-4\sqrt N\le k\le n}\frac{N^k}{k!}\, .
\end{gather*}
Hence, for $N\ge 38$  we have $N-2\sqrt N\ge 4\sqrt N+1$, so using Lemma \ref{stirling} with $d=2$, 
we infer that there exists a constant $E>0$ independent of $N$, such that
\begin{gather*}
\int_{\mathbb T } \|w_{N,\gamma}\|^2\ge 
\sum_{N-2\sqrt N\le n\le N}\Big(\|T^nx\|^2 \big(\sum_{n-4\sqrt N\le k\le n} \frac{N^k}{k!} \big)^2\Big)
 \ge \\ 
\Big(\sum_{N-2\sqrt N\le n\le N}\|T^nx\|^2\Big)
\Big( (\sum_{N-4\sqrt N\le k\le N-2\sqrt N} \frac{N^k}{k!})^2\Big)\ge 
E{\rm e}^{2N} \sum_{N-2\sqrt N\le n\le N }\|T^nx\|^2\, .
\end{gather*}
This provides the first bound with $D=16M^2/E > 4^\beta M^2/E$, when $N\ge 38$. Taking $D$ 
(which is independent of $C$ and $\beta$) greater if necessary,
 the first bound also holds for $4\le N\le 37$. 

The estimate (\ref{Bound-2}) now follows from Lemma \ref{bound}, noticing that $\beta/2\in (0,1]$.
\end{proof}

\begin{lem}
Let $T$ be strongly Kreiss bounded on $H$.  Then there exists $C>0$ such that for every $N\in \N$, 
every $x\in H$ and every integer $K\ge 0$,  we have 
\begin{equation} \label{to-log}
\sum_{k=1}^N \|T^kx\|^2 \le C (8D)^K(1+\|T\|^2)^K N^{1+2^{-K-1}}\|x\|^2.
\end{equation}
Moreover, there exist $S>0$ and $\kappa>0$ such that for every $N\in \N$ and every $x\in H$, 
$\sum_{k=1}^N \|T^kx\|^2 \le SN(\log (N+1))^\kappa \|x\|^2$.
\end{lem}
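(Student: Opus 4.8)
The plan is to bootstrap the estimate $\sum_{k=1}^N\|T^kx\|^2\le CN^\beta\|x\|^2$ by iterating Lemma~\ref{bound-2}. Starting from the trivial exponent $\beta_0=2$ (which follows from $\|T^k\|\le\|T\|^k$, or more efficiently from a first application of Lemma~\ref{bound} using \eqref{strong} with $d=2$, $\alpha=1$, giving $\beta_0=3/2$), each application of Lemma~\ref{bound-2} replaces the current exponent $\beta$ by $(\beta+1)/2$ at the cost of multiplying the constant by $8D(1+\|T\|^2)$. Writing $\beta_{K+1}=(\beta_K+1)/2$, one checks $\beta_K-1=2^{-K}(\beta_0-1)$, so if we start at $\beta_0\le 3/2$ then $\beta_K=1+2^{-K-1}$, which is exactly the exponent in \eqref{to-log}; one must also verify the hypothesis $\beta_K\in(1,3/2]$ needed to apply Lemma~\ref{bound-2} at each stage, which holds since the $\beta_K$ decrease monotonically to $1$ from $\beta_0\le3/2$. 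Tracking the constant through $K$ iterations produces the factor $(8D)^K(1+\|T\|^2)^K$ (possibly adjusting the base constant $C$ once to absorb the first step and the finitely many small values of $N$), giving \eqref{to-log}.

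For the ``moreover'' part, the point is to optimize \eqref{to-log} over $K$ for each fixed $N$. The bound reads $\sum_{k=1}^N\|T^kx\|^2\le C A^K N^{1+2^{-K-1}}\|x\|^2$ with $A=8D(1+\|T\|^2)$. Taking logarithms, we want to minimize $K\log A+2^{-K-1}\log N$ over nonnegative integers $K$. Choosing $K=K(N)\approx\log_2\log N$ (more precisely, the integer nearest to $\log_2(\log N/\log A)$, say), the term $2^{-K-1}\log N$ becomes $O(\log A)=O(1)$, so $N^{2^{-K-1}}=O(1)$, while $A^K=A^{O(\log\log N)}=(\log N)^{O(\log A)}$. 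Hence $\sum_{k=1}^N\|T^kx\|^2\le SN(\log(N+1))^\kappa\|x\|^2$ with $\kappa=\kappa(A)=O(\log A)$ a fixed positive constant depending only on $T$ (through $D$ and $\|T\|$), and $S$ chosen large enough to handle small $N$ where the asymptotic choice of $K$ is not yet valid. This is the asserted estimate, and it is also the estimate quoted in the Remark following Theorem~\ref{strong-kreiss}.

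Finally, to finish the proof of Theorem~\ref{strong-kreiss} itself (after this last lemma), I would feed $\sum_{k=1}^N\|T^kx\|^2\le SN(\log(N+1))^\kappa\|x\|^2$ back into the block argument: $T^*$ is also strongly Kreiss bounded by \eqref{SKR2} (since $\|e^{z T^*}\|=\|(e^{\bar z T})^*\|=\|e^{\bar z T}\|\le Me^{|z|}$), so it satisfies the same estimate, and then the Cauchy--Schwarz splitting $N|\langle T^Nx,y\rangle|\le\sum_{k=0}^{N-1}\|T^{N-k}x\|\,\|T^{*k}y\|$ together with these two bounds yields $\|T^N\|^2\le C'(\log(N+1))^\kappa$, hence $\|T^n\|=O((\log n)^{\kappa/2})$, which is the theorem with $\kappa$ relabeled.

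The main obstacle I anticipate is the bookkeeping of constants through the iteration: one must be careful that $D$ in Lemma~\ref{bound-2} is genuinely independent of $C$ and $\beta$ (as that lemma asserts), since otherwise the product $(8D)^K$ would not have a clean form, and one must check that the monotone exponents $\beta_K$ stay in the admissible range $(1,3/2]$ throughout so that Lemma~\ref{bound-2} keeps applying; the optimization over $K$ is then a routine, if slightly delicate, computation.
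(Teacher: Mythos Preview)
Your argument is correct and matches the paper's proof: start from $\beta_0=3/2$ via \eqref{strong} and Lemma~\ref{bound} with $\alpha=1$, induct using Lemma~\ref{bound-2} to get \eqref{to-log}, then optimize over $K\approx\log_2\log N$ (the paper takes $2^K\le \log N/\log\log N<2^{K+1}$, yielding $\kappa=\log_2(8D(1+\|T\|^2))+1$, but your choice works equally well). One small correction: the parenthetical ``which follows from $\|T^k\|\le\|T\|^k$'' does not give $\beta_0=2$ unless $\|T\|\le 1$; the correct route is the one you actually use, via Lemma~\ref{bound}.
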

\begin{proof} For $K=0$, (\ref{to-log}) follows from \eqref{strong} and Lemma \ref{bound} with 
$\alpha=1$. Then, the estimate follows by an easy induction making use of (\ref{Bound-2}).

Let us prove the second bound. Fix an integer $N\ge {\rm e}^{\rm e}$. Let $K\ge 0$ be the integer 
such that  $2^K\le \frac{\log N}{\log \log N}<2^{K+1}$. Set $c:= \log (8D(1+\|T\|^2))/\log 2$. 
Then 
$$(8D(1+\|T\|^2))^K=(2^K)^c\le \big( \frac{\log N}{\log \log N} \big)^c \le (\log N)^c.$$ 
Also  $N^{2^{-K-1}}=\exp(2^{-K-1}\log N)\le \log N$, and (\ref{to-log}) yields the result with 
$\kappa=c+1$.
\end{proof}
\medskip

Let us finish the proof of Theorem \ref{strong-kreiss}. 
\smallskip

Notice that $T^*$ is also strongly Kreiss bounded, with $K_{sk}(T^*)=K_{sk}(T)$,
and that $\|T\|=\|T^*\|$. Hence, all our Lemmas apply to $T^*$ with the same constants. 
\medskip

Let $N\ge 2$ and $x, y\in H$, with $\|y\|=1$.  Applying the previous lemma also to 
$T^*$ and $y$ we get
\begin{gather*}
(N-1)^2|\langle T^Nx, y\rangle|^2= \Big(\sum_{k=1}^{N-1} |\langle T^k x, T^{*(N-k)}y\rangle|\Big)^2 
\le  \\
( \sum_{k=1}^{N-1}\|T^kx\|^2)(\sum_{k=1}^{N-1}\|T^{*k}y\|^2)\le S^2N^2(\log (N+1))^{2\kappa}\|x\|^2\, .
\end{gather*}
Now, the result follows by taking the supremum over $\{y\in H\, :\, \|y\|=1\}$.
\hfill $\square$
\medskip

Theorem  \ref{strong-kreiss} improves the bound $o(\sqrt{n}/(\log n)^\kappa)$ 
for any $\kappa>0$, stated (without proof) in \cite[p. 3]{McC}.
Now we prove that the estimate of Theorem \ref{strong-kreiss} is the best possible, 
and indeed $\kappa$ depends on $T$. 

\begin{prop} \label{sk-log}
For every $\kappa>0$ there exists a strongly Kreiss bounded operator $T$ on a complex Hilbert 
space $H$, such that $\|T^n\|= \frac1{(\log 2)^\kappa}(\log (n+2))^\kappa$ for every $n\in \N$.
\end{prop}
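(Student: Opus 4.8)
The plan is to realise $T$ as a weighted backward shift whose weights are forced by the required norms of the powers, and then to verify the strong Kreiss condition through Nevanlinna's characterisation \eqref{SKR2} by means of a weighted Schur test. Put $f(t):=\bigl(\log(t+2)/\log 2\bigr)^{\kappa}$ for $t\ge0$, so that $f$ is increasing and $f(0)=1$. Let $H$ carry an orthonormal basis $(e_n)_{n\ge0}$ and define $Te_n:=w_n e_{n-1}$ for $n\ge1$ and $Te_0:=0$, with $w_i:=f(i)/f(i-1)>1$; then $\|T\|=w_1<\infty$. One has $T^ke_n=(w_n\cdots w_{n-k+1})e_{n-k}=\frac{f(n)}{f(n-k)}e_{n-k}$ for $n\ge k$ and $T^ke_n=0$ for $n<k$, so, as $(e_n)$ is orthonormal, $\|T^k\|=\sup_{n\ge k}\frac{f(n)}{f(n-k)}$. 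Since $t\mapsto(t+2)\log(t+2)$ is increasing, $n\mapsto\frac{f(n)}{f(n-k)}$ is non-increasing for $n\ge k$, whence $\|T^k\|=\frac{f(k)}{f(0)}=f(k)=\frac1{(\log2)^\kappa}\bigl(\log(k+2)\bigr)^{\kappa}$ for every $k\ge1$, as required.

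It remains to show that $T$ is strongly Kreiss bounded, and by the Nevanlinna equivalence \eqref{SKR}$\Leftrightarrow$\eqref{SKR2} it suffices to bound $\|\e^{zT}\|$. From $T^me_n=\frac{f(n)}{f(n-m)}e_{n-m}$ one reads off the matrix $(\e^{zT})_{j,n}=\frac{f(n)}{f(j)}\frac{z^{\,n-j}}{(n-j)!}$ for $0\le j\le n$ (zero otherwise). I apply Schur's test with the positive weight $p_j:=1/f(j)$. The row sums are evaluated exactly,
\[
\sum_{n\ge j}\bigl|(\e^{zT})_{j,n}\bigr|\,p_n=\frac1{f(j)}\sum_{m\ge0}\frac{|z|^m}{m!}=\e^{|z|}p_j ,
\]
so $C_1=\e^{|z|}$; and the column sums equal
\[
\sum_{0\le j\le n}\bigl|(\e^{zT})_{j,n}\bigr|\,p_j=f(n)\sum_{k=0}^{n}\frac{|z|^{k}}{k!\,f(n-k)^{2}}=\frac{\e^{|z|}}{f(n)}\,\E\Bigl[\mathbf 1_{\{N\le n\}}\bigl(\tfrac{f(n)}{f(n-N)}\bigr)^{2}\Bigr] ,
\]
where $N$ is a Poisson$(|z|)$ variable, so one may take $C_2=\e^{|z|}\sup_{n\ge0}\E\bigl[\mathbf 1_{\{N\le n\}}(f(n)/f(n-N))^{2}\bigr]$. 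Since $\|\e^{zT}\|\le\sqrt{C_1C_2}$, the proposition reduces to the uniform bound
\[
M_\kappa^{2}:=\sup_{w\ge0}\ \sup_{n\ge0}\ \E_{N\sim\mathrm{Poisson}(w)}\Bigl[\mathbf 1_{\{N\le n\}}\bigl(\tfrac{f(n)}{f(n-N)}\bigr)^{2}\Bigr]<\infty ,
\]
for then $\|\e^{zT}\|\le M_\kappa\e^{|z|}$ for every $z\in\C$, i.e.\ \eqref{SKR2} holds.

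To prove this uniform bound I would expand the expectation as $\sum_{k=0}^{n}(f(n)/f(k))^{2}\P(N=n-k)$ and use three facts. (a) A companion to Lemma~\ref{stirling}, proved the same way but invoking the Stirling \emph{lower} bound $m!\ge\sqrt{2\pi m}\,(m/\e)^{m}$, giving $\P(N=m)\le C\,w^{-1/2}\e^{-c\min((m-w)^{2}/w,\,|m-w|)}$ for every integer $m\ge0$, together with the usual Chernoff bounds on $\P(N\ge w+t)$ and $\P(N\le w-t)$. (b) The slow variation of $f$: $f(n)/f(k)\le2^{\kappa}$ whenever $k\ge\sqrt{n+2}$, and more generally $f(n)/f(\lfloor cn\rfloor)\le C_{\kappa}$. (c) The partial-sum estimate $\sum_{k=0}^{K}f(k)^{-2}\le C_{\kappa}(K+2)/f(K+2)^{2}$, valid for all $K\ge0$ because $f^{-2}$ is slowly varying. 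One then splits according to the size of $w$: if $w\le n/2$ or $w\ge2n$ the indicator forces $N$ into a large-deviation range for $\mathrm{Poisson}(w)$, so the terms with $k<\sqrt{n+2}$ (the only ones where $f(n)/f(k)$ is large) contribute at most $f(n)^{2}\e^{-cn}=o(1)$ and the remaining terms contribute at most $2^{2\kappa}$; if $n/2<w<2n$, (a) dominates $\P(N=n-k)$ by $C\,w^{-1/2}$ times a Gaussian (then exponential) factor centred at $k_{0}:=n-w$, and (b)--(c) bound the contribution of the $k$'s near $k_{0}$ by $\lesssim f(n)^{2}/f(\max(k_{0},\sqrt n))^{2}\lesssim 4^{\kappa}$ --- the factor $f(n)^{2}$ cancelling because $f(k_{0})\ge c\,f(n)$ when $k_{0}\ge c\sqrt n$, and because when $k_{0}$ is small the window of relevant $k$'s has length $\asymp\sqrt w\asymp\sqrt n$ against only a single $f(n)^{2}$ --- while the contributions far from $k_{0}$ are absorbed by the Gaussian decay, using (c) again for the region of small $k$.

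The main obstacle is precisely this uniform bound; everything else --- the construction of $T$, the value of $\|T^{k}\|$, the matrix of $\e^{zT}$, the Schur estimate --- is routine. The crude $\|\e^{zT}\|\le\sum_m\frac{|z|^m}{m!}\|T^m\|$ only yields the too-weak $\|\e^{zT}\|\lesssim f(|z|)\e^{|z|}$, and the real point is that in the critical regime $w\asymp n$ the polylogarithmically large factors $(f(n)/f(k))^{2}\asymp(\log n)^{2\kappa}$ arising from small $k$ are \emph{exactly} compensated: there $\P(N=n-k)=O(n^{-1/2})$ and the mass is spread over an interval of $k$'s of length $\asymp\sqrt n$, on which $\sum_k(f(n)/f(k))^{2}=f(n)^{2}\sum_k f(k)^{-2}$ is, by (c), only $O(4^{\kappa}\sqrt n)$ rather than $O(f(n)^{2}\sqrt n)$. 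Checking that all cases --- in particular $w$ slightly below $n$, where $k_{0}=n-w$ is of intermediate size --- close uniformly in $w$ and $n$ is the delicate technical part.
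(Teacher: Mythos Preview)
Your construction is essentially the same as the paper's: your weighted backward shift on $\ell^2$ is isometrically conjugate (via the diagonal map $e_n\mapsto f(n)^{-1}e_n$) to the unweighted left shift on $L^2(\N,\nu)$ with $\nu=\sum_n f(n)^{-2}\delta_n$, which is exactly the paper's operator (with $2\kappa$ in place of $\kappa$). Your computation of $\|T^k\|$ is correct and matches the paper's.

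The reduction to \eqref{SKR2} is also parallel. The paper uses Cauchy--Schwarz, $\|\e^{zT}x\|^2\le \e^{|z|}\sum_m\frac{|z|^m}{m!}\|T^mx\|^2$, and then interchanges sums; you use the Schur test. Both routes lead to the \emph{same} scalar inequality: one must show
\[
f(n)^2\sum_{m=0}^{n}\frac{r^m}{m!\,f(n-m)^2}\ \le\ C\,\e^{r}\qquad\text{uniformly in }n\ge0,\ r>0.
\]
What differs is how this is proved, and here the paper is decisively simpler. The key observation you are missing is the elementary uniform bound
\[
r^m\e^{-r}\ \le\ m^m\e^{-m}\qquad\text{for all }r>0,\ m\in\N,
\]
since $x\mapsto x^m\e^{-x}$ attains its maximum at $x=m$. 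Combined with Stirling this gives $\dfrac{r^m}{m!}\le \dfrac{C\,\e^{r}}{\sqrt m}$ \emph{uniformly in $r$}. The paper then makes a single split at $m=n-n^{1/4}$: for $m\le n-n^{1/4}$ one has $f(n-m)\ge f(n^{1/4})\ge c\,f(n)$ and bounds the $m$-sum crudely by $\e^r$; for $n-n^{1/4}<m\le n$ one uses the displayed bound and gets at most $C\,\e^{r}\,n^{1/4}\,f(n)^2/\sqrt n$, which is bounded. No case analysis on the size of $r$ versus $n$ is needed.

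Your Poisson/Chernoff programme would also succeed --- your outline is sound and the critical regime $w\asymp n$ does close for the reasons you sketch --- but it is considerably more laborious, and the ``delicate technical part'' you flag is precisely what the one-line bound $r^m\e^{-r}\le m^m\e^{-m}$ bypasses. If you incorporate that observation, your Schur-test argument collapses to essentially the paper's proof.
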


\begin{proof} Fix $\kappa >0$ and let $H:=L^2(\N,\nu)$ with 
$\nu= \sum_{n\in \N}\frac{\delta_n}{(\log(n+1))^\kappa}$. Let $T$ be the left shift on $H$. 
Obviously, 
$\|T^ne_{n+1}\|^2 = \frac1{(\log 2)^\kappa}= (\log (n+2)/\log 2 )^\kappa \|e_{n+1}\|^2$.

For $(x_k)_{k\in \N}=x \in H$ we have
$$
\|T^nx\|^2 =\sum_{k=1}^\infty {|x_{k+n}|^2} \frac1{(\log(k+1))^\kappa} \le
\|x\|^2 \cdot \sup_{k\ge 1} \Big(\frac{\log(k+n+1)}{\log(k+1)}\Big)^\kappa = 
\Big(\frac{\log(n+2)}{\log 2} \Big) ^\kappa \|x\|^2,
$$
since $\log (x+n)/\log x$ is decreasing.
Hence, $\|T^n\| =(\log (n+2))^{\kappa/2}/(\log 2)^{\kappa/2}$.
\medskip


To prove (\ref{SKR2}), we note that by Cauchy-Schwarz, 
$\|\e^{zT}x\|^2 \le \e^{|z|} \sum_{n\ge 0} \frac{|z|^n}{n!}\|T^nx\|^2 $, 
so it suffices to find $C>0$ such that for every $r>0$ and $x\in H$, 
$\sum_{n\ge 0}\frac{r^n}{n!}\|T^nx\|^2\le C{\rm e}^{r} \|x\|^2$. 
\medskip

Fix $r>0$ and $(x_k)_{k\in \N} =x \in H$. Since 
$\frac{\log(k+1-n)}{\log(k+1)}\ge \frac{\log 2}{\log(n+2)}$ for $0\le n<k$ and
$r^n {\rm e}^{-r}\le n^n{\rm e}^{-n}$ for every $n\in \N$, we obtain
\begin{gather*}
\sum_{n\ge 0}\frac{r^n}{n!}\|T^nx\|^2 = 
\sum_{n\ge 0}\frac{r^n}{n!}\sum_{k\in \N} \frac{|x_{k+n}|^2}{(\log (k+1))^\kappa}
= \sum_{n\ge 0}\frac{r^n}{n!}\sum_{k\ge n+1} \frac{|x_{k}|^2}{(\log (k+1-n))^\kappa} \le \\
\sum_{k\ge 1} \frac{|x_k|^2}{(\log (k^{1/4}+1))^\kappa}
 \sum_{0\le n\le k-k^{1/4}} \frac{r^n}{n!}+ 
{\rm e}^r\sum_{k\ge 1}  \frac{|x_k|^2}{(\log (k+1))^\kappa}\sum_{k-k^{1/4}< n \le k-1} 
\frac{n^n {\rm e}^{-n}(\log (n+2))^\kappa}{n!(\log 2)^\kappa} \ .
\end{gather*}
Using  boundedness of $\log(k+1)/\log(k^{1/4}+1)$ (its limit as $k\to \infty$ is 4) to bound
the first term and Stirling's formula to bound the second term, we obtain
\begin{gather*}
\sum_{n\ge 0}\frac{r^n}{n!}\|T^nx\|^2 \le \\
C_1 \sum_{k\ge 1} \frac{|x_k|^2}{(\log (k+1))^\kappa} \sum_{0\le n\le k-k^{1/4}} \frac{r^n}{n!} + 
{\rm e}^r\sum_{k\ge 1}  \frac{|x_k|^2}{(\log (k+1))^\kappa}\sum_{k-k^{1/4}< n \le k-1} 
C_2\ \frac{(\log(n+2))^\kappa}{\sqrt{2\pi n} (\log 2)^\kappa}\le \\
C_1\ \e^r \|x\|^2 + C_2\ {\rm e}^r\sum_{k\ge 1}  \frac{|x_k|^2}{(\log (k+1))^\kappa} 
\Big(k^{1/4} \frac{(\log(k+1))^\kappa}{(\log 2)^\kappa \sqrt{\pi k} } \Big) \le
\e^r C\|x\|^2\, .
\end{gather*}
\end{proof}

{\bf Remarks.} 1. Such a result was proved by McCarthy \cite{McC} for $\kappa=1$ and 
$X=\ell^\infty(\N)$. In our proof $H=L^2(\N,\nu)$, for some $\nu$ which depends on $\kappa$,
but the proof works equally with $X=L^p(\N,\nu)$ for any $1\le p<\infty$.

2. Actually, we have constructed $T$ such that  for some $C>0$ we have
\begin{equation}\label{ASK}
\sum_{n\ge 0}\frac{r^n \|T^nx\|}{n!}\le C{\rm e}^{r}\|x\| \qquad 
\forall x\in X,\ \forall r\ge 0\, .
\end{equation}
We call $T$ which satisfies \eqref{ASK} {\it absolutely strongly Kreiss bounded}.
We denote by $K_{ASK}=K_{ASK}(T)$ the smallest constant $C$ for which \eqref{ASK} is satisfied.

Absolute strong Kreiss boundedness implies not only strong Kreiss boundedness, 
but also absolute Ces\`aro boundedness (see below); still, 
$T$ is not power-bounded.  Proposition \ref{sk-log} yields examples of strongly Kreiss bounded 
operators which are not power-bounded, different from McCarthy's \cite{McC}.
\smallskip

\begin{prop} \label{ask-acb}
Absolute strong Kreiss boundedness implies absolute Ces\`aro boundedness.
\end{prop}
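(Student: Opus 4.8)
The plan is to pass from the exponential bound \eqref{ASK} to the Abel-type bound characterizing absolute Ces\`aro boundedness, using the same device that relates \eqref{SKR2} to \eqref{SKR}, namely integrating $\e^{zT}$ against a suitable measure in $z$ to "de-Poissonize''. Concretely, fix $x\in X$ and $\lambda>1$. The Abel sum $\sum_{n\ge 0}\lambda^{-n-1}\|T^nx\|$ that we must estimate (recall the Proposition identifying absolute Ces\`aro boundedness with absolute Kreiss boundedness \eqref{AK}) should be recovered by integrating the Poisson-type quantity $\sum_{n\ge 0}\frac{r^n}{n!}\|T^nx\|$ against $\e^{-\lambda r}$ over $r\in(0,\infty)$: since $\int_0^\infty \e^{-\lambda r}\frac{r^n}{n!}\,dr=\lambda^{-n-1}$, term-by-term integration (justified by positivity of all summands, Tonelli) gives
\begin{equation*}
\sum_{n\ge 0}\frac{\|T^nx\|}{\lambda^{n+1}}=\int_0^\infty \e^{-\lambda r}\Big(\sum_{n\ge 0}\frac{r^n}{n!}\|T^nx\|\Big)\,dr.
\end{equation*}

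Now I would apply the hypothesis \eqref{ASK}: the inner sum is at most $K_{ASK}\e^{r}\|x\|$, so the right-hand side is at most $K_{ASK}\|x\|\int_0^\infty \e^{-(\lambda-1)r}\,dr=\frac{K_{ASK}\|x\|}{\lambda-1}$. Hence $T$ satisfies \eqref{AK} with constant $K_{ASK}$, i.e. $T$ is absolutely Kreiss bounded, and by the Proposition proved earlier in the excerpt (absolute Ces\`aro boundedness $\iff$ absolute Kreiss boundedness) $T$ is absolutely Ces\`aro bounded. One should note for this step that absolute strong Kreiss boundedness forces $r(T)\le 1$, so that the resolvent series expansion used in the definition of absolute Kreiss boundedness is legitimate for $|\lambda|>1$; this follows from \eqref{ASK} applied to $x$ an approximate eigenvector, or simply because \eqref{ASK} already implies the finiteness of the Abel sums computed above.

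I expect no serious obstacle here: the only point needing a word of care is the interchange of sum and integral, which is immediate from Tonelli since every term $\e^{-\lambda r}\frac{r^n}{n!}\|T^nx\|$ is nonnegative. An alternative, entirely elementary route avoiding the Abel/Ces\`aro equivalence would be to integrate \eqref{ASK} against the probability density $\e^{-r}$ restricted to get the Ces\`aro averages directly, but the computation with $\e^{-\lambda r}$ above is cleanest because it lands exactly on the already-established characterization \eqref{AK}. Either way the constant obtained is $K_{ac}(T)\le K_{ASK}(T)$.
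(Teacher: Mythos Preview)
Your proof is correct and takes a genuinely different route from the paper's. The paper argues directly at the level of Ces\`aro sums: it applies the Stirling-type Lemma~\ref{stirling} (with $d=1$) to the assumption \eqref{ASK} at $r=N$ to extract a bound $\sum_{N-\sqrt N\le k\le N}\|T^kx\|\le C'\sqrt N\,\|x\|$, and then runs the block-summing argument of Lemma~\ref{bound} (with $\|T^nx\|$ in place of $\|T^nx\|^2$ and $\alpha=1/2$) to assemble these into $\sum_{k=1}^N\|T^kx\|\le MN\|x\|$. Your argument instead passes through the Abel side: the Laplace identity $\int_0^\infty \e^{-\lambda r}r^n/n!\,dr=\lambda^{-n-1}$ together with Tonelli converts \eqref{ASK} directly into the absolute Kreiss condition \eqref{AK}, and you then invoke the already-established equivalence of \eqref{AK} with absolute Ces\`aro boundedness. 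This is cleaner and avoids both Stirling and the dyadic/block bookkeeping; the paper's approach, on the other hand, is self-contained in that it does not rely on the Abel--Ces\`aro Tauberian equivalence for positive sequences quoted from \cite{Em}.

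One small caveat: your final remark that the constant obtained is exactly $K_{\rm ac}(T)\le K_{ASK}(T)$ is not quite justified. What your computation gives is the absolute Kreiss constant $\le K_{ASK}$; the passage from Abel boundedness to Ces\`aro boundedness for positive sequences (the direction used in the equivalence proposition) may introduce an absolute multiplicative constant, so one only gets $K_{\rm ac}(T)\le c\cdot K_{ASK}(T)$ for some universal $c$. This does not affect the validity of the proposition.
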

\begin{proof} Let $N\in \N$. 
By Lemma \ref{stirling}, with $d=1$, there exists $C>0$ such that 
$$
\frac{C {\rm e}^N}{\sqrt N}\sum_{N-d\sqrt N\le k\le N} \|T^kx\|\le 
\sum_{n\ge 0} \frac{N^n\|T^nx\|}{n!}\le K_{ASK}{\rm e}^N\|x\|\, .
$$
Hence $\sum_{N-d\sqrt N\le k\le N} \|T^kx\|\le C'\sqrt{N} \|x\|$.
Proceeding as in the proof of Lemma \ref{bound},
with $\|T^nx\|^2$ replaced by $\|T^nx\|$ (and taking $\alpha =1/2$), we infer that 
there exists $M>0$, such that for  every $N\in \N$,
$$
\sum_{k=1}^{N}\|T^kx\|\le M\cdot N \|x\|\, ,
$$
which is precisely absolute Ces\`aro boundedness.
\end{proof}

The following corollary applies to the operators constructed in Proposition \ref{sk-log}.

\begin{cor}
Let $T$ be an absolutely strongly Kreiss bounded operator on $H$ which is not power-bounded.
Then $T^*$ is strongly Kreiss bounded, but not absolutely strongly Kreiss bounded and not
absolutely Ces\`aro bounded.
\end{cor}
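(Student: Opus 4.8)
The plan is to prove the three assertions about $T^*$ in turn, leveraging the results already established in the excerpt. First, since $T$ is absolutely strongly Kreiss bounded, it is in particular strongly Kreiss bounded, i.e.\ \eqref{SKR2} holds for $T$. Because $\|{\rm e}^{zT^*}\| = \|({\rm e}^{zT})^*\| = \|{\rm e}^{zT}\|$ for every $z \in \C$, the adjoint $T^*$ also satisfies \eqref{SKR2}, hence $T^*$ is strongly Kreiss bounded. This gives the first claim immediately.

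For the second and third claims I would argue by contradiction, using the implications proved earlier. By Proposition \ref{ask-acb}, absolute strong Kreiss boundedness implies absolute Ces\`aro boundedness; so it suffices to rule out that $T^*$ is absolutely Ces\`aro bounded, since that would in particular rule out the (formally stronger) property of being absolutely strongly Kreiss bounded. Now suppose, for contradiction, that $T^*$ were absolutely Ces\`aro bounded. Since $T$ is absolutely strongly Kreiss bounded, $T$ is absolutely Ces\`aro bounded by Proposition \ref{ask-acb}. Thus both $T$ and $T^*$ would be absolutely Ces\`aro bounded, and Proposition \ref{zwart} would force $T$ to be power-bounded --- contradicting the hypothesis that $T$ is not power-bounded. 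Hence $T^*$ is not absolutely Ces\`aro bounded, and a fortiori not absolutely strongly Kreiss bounded.

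I do not anticipate any real obstacle here: the whole argument is a short chain of citations to Proposition \ref{zwart} and Proposition \ref{ask-acb}, together with the elementary observation that $\|{\rm e}^{zT^*}\| = \|{\rm e}^{zT}\|$ (which uses only that taking adjoints is an isometry on $\mathcal{B}(H)$ and that $({\rm e}^{zT})^* = {\rm e}^{\bar z T^*}$, whose norm equals that of ${\rm e}^{zT}$ since $|\bar z| = |z|$). The only point to state carefully is the direction of the implications: absolute strong Kreiss boundedness $\Rightarrow$ absolute Ces\`aro boundedness, so failure of the latter for $T^*$ yields failure of the former; and one should note explicitly that $T$ itself being absolutely Ces\`aro bounded (again via Proposition \ref{ask-acb}) is what makes Proposition \ref{zwart} applicable once one assumes the same for $T^*$.
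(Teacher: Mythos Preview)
Your proof is correct and essentially identical to the paper's: both use Proposition~\ref{ask-acb} to get that $T$ is absolutely Ces\`aro bounded, note that $T^*$ inherits strong Kreiss boundedness from $T$, and then invoke Proposition~\ref{zwart} (\cite[Theorem~2.2]{BBMP}) to derive a contradiction from assuming $T^*$ is absolutely Ces\`aro bounded. The only cosmetic difference is the order of presentation---you rule out absolute Ces\`aro boundedness of $T^*$ first and deduce failure of absolute strong Kreiss boundedness, while the paper starts from the latter assumption; the logical content is the same.
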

\begin{proof}
$T$ is absolutely Ces\`aro bounded, and $T^*$ is obviously strongly Kreiss bounded, 
since $T$ is. If $T^*$ were absolutely strongly Kreiss bounded, it would be absolutely 
Ces\`aro bounded by Proposition \ref{ask-acb}, so by  \cite[Theorem 2.2]{BBMP} (Proposition 
\ref {zwart})  $T$ would be power-bounded, which is a contradiction.
\end{proof}
\smallskip

\section{Ergodic properties under the Kreiss resolvent condition}

Strikwerda and Wade \cite{SW1}, \cite[Theorem 6.1]{SW} proved that $T$ satisfies the 
Kreiss resolvent condition if and only if there is a constant $C$ such that
\begin{equation} \label{sw}
\sup_n \|M_n^{(2)}(\gamma T)\| \le C \quad \forall |\gamma|=1,
\end{equation}
where $M_n^{(2)}(T):= \frac2{(n+1)(n+2)} \sum_{j=0}^n (n+1-j)T^j$ is the $n$th {\it
Ces\`aro mean of order 2} of $T$.
The example in \cite[p. 352]{SW} shows that the Kreiss resolvent condition does 
not imply Ces\`aro boundedness; however, the space there is not reflexive.
\smallskip

We start by extending the characterization of \cite{SW}.
Let us recall the definition of the Ces\`aro means of order $\alpha$ 
(C-$\alpha$ means) and several of their properties. 
We refer to \cite[Section III.1]{Zy}  for those facts. 
\smallskip

For every $\alpha\in \R$, set $A_0^\alpha=1$, and 
$A_n^\alpha:=\frac{(\alpha +1)\ldots (\alpha+n)}{n!}$ for $n\ge 1$.

Then, $A_n^\alpha=\sum_{k=0}^n A_{n-k}^{\alpha -1}$ and 
$A_n^\alpha \sim C_\alpha n^{\alpha}$ as $n\to \infty$. 
\smallskip

Given an operator $T$ on a Banach space, we define 
$M_n^{(\alpha)}=M_n^{(\alpha)}(T)=\frac1{A_n^\alpha}\sum_{k=0}^{n}A_{n-k}^{\alpha-1}T^k$ and 
$S_n^\alpha=S_n^\alpha(T)=A_n^\alpha M_n^{(\alpha)}=\sum_{k=0}^{n}A_{n-k}^{\alpha-1}T^k$. 
Note that $M_n^{(0)}=T^n$ and $M_n^{(1)}= \frac1{n+1}\sum_{k=0}^n T^k$.
\smallskip

For every complex number $z$ with $|z|<1$ one has 
\begin{equation}\label{alpha-identity}
\sum_{n\ge 0} S_n^\alpha z^n= (1-z)^{-\alpha }\sum_{n\ge 0}  z^n T^n \, .
\end{equation}
Here, $(1-z)^{-\alpha} =\exp (-\alpha \log (1-z)) $ where $\log$ is the principal determination 
of the logarithm. In particular, $|(1-z)^{-\alpha}|=|1-z|^\alpha$.
\medskip

{\bf Definition.} We call $T$ {\it Ces\`aro-$\alpha$ bounded} if there exists $C<\infty$ such that 
$$
\sup_{n\ge 0}\|M_n^{(\alpha)}(T)\|\le C\, .
$$
Equivalently, $T$ is Ces\`aro-$\alpha$ bounded if 
$ \sup_{n\ge 0} (n+1)^{-\alpha}\|S_n^\alpha\| < \infty$.

\begin{prop}
Assume that $T$ is Kreiss bounded. Then, for every  $\alpha >1$, 
$$
\sup_{n\ge 2} n^{-\alpha}\|S_n^\alpha\|\le  \frac{2^{\alpha/2} 4 \kb }{\alpha-1}\, .
$$
In particular, $T$ is Ces\`aro-$\alpha$ bounded when $\alpha> 1$.
\end{prop}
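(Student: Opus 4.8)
The plan is to read off $S_n^\alpha$ as the $n$-th Taylor coefficient of the operator-valued function in \eqref{alpha-identity} and to estimate it by Cauchy's formula on the circle of radius $r=1-1/n$, where the Kreiss condition controls the integrand. A direct bound $\|S_n^\alpha\|\le\sum_{k=0}^nA_{n-k}^{\alpha-1}\|T^k\|$ combined with $\|T^k\|=O(k)$ only gives $O(n^{\alpha+1})$, so some cancellation must be exploited; the contour integral does exactly that.

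First I would note that $\kb<\infty$ forces $r(T)\le 1$, so that for $|z|<1$ the Neumann series $\sum_{m\ge 0}z^mT^m$ converges in operator norm and equals $z^{-1}R(1/z,T)$; applying \eqref{KRC} with $\lambda=1/z$ gives
\[
\Big\|\sum_{m\ge 0}z^mT^m\Big\|=\frac1{|z|}\,\|R(1/z,T)\|\le\frac{\kb}{1-|z|}\qquad(|z|<1).
\]
By \eqref{alpha-identity}, $\sum_{n\ge 0}S_n^\alpha z^n=(1-z)^{-\alpha}\sum_{m\ge 0}z^mT^m$ on $|z|<1$, and since $|(1-z)^{-\alpha}|=|1-z|^{-\alpha}$ the right-hand side is norm-continuous on each circle $|z|=r<1$. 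Cauchy's formula, followed by taking norms under the integral, then yields
\[
\|S_n^\alpha\|\le\frac{r^{-n}}{2\pi}\cdot\frac{\kb}{1-r}\int_0^{2\pi}|1-re^{i\theta}|^{-\alpha}\,d\theta .
\]

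Next I would take $r=1-1/n$ and bound the three factors. One has $r^{-n}=(1+\tfrac1{n-1})^n\le 4$ for $n\ge 2$ (this sequence decreases to $e$, with value $4$ at $n=2$), and $1/(1-r)=n$. For the integral, from $|1-re^{i\theta}|^2=(1-r)^2+4r\sin^2(\theta/2)$, symmetry in $\theta$, the substitution $\phi=\theta/2$, Jordan's inequality $\sin\phi\ge 2\phi/\pi$ on $[0,\pi/2]$, enlarging the range to $[0,\infty)$, and finally the scaling $\phi=\tfrac{\pi(1-r)}{4\sqrt r}s$, I get
\[
\int_0^{2\pi}|1-re^{i\theta}|^{-\alpha}\,d\theta\le\frac{\pi\,(1-r)^{1-\alpha}}{\sqrt r}\int_0^\infty(1+s^2)^{-\alpha/2}\,ds ,
\]
while splitting at $s=1$ gives $\int_0^\infty(1+s^2)^{-\alpha/2}\,ds\le 1+\tfrac1{\alpha-1}=\tfrac{\alpha}{\alpha-1}$. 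Since $r\ge\tfrac12$ (so $r^{-1/2}\le\sqrt2$) and $(1-r)^{1-\alpha}=n^{\alpha-1}$, combining the pieces gives $\|S_n^\alpha\|\le 2\sqrt2\,\kb\,\tfrac{\alpha}{\alpha-1}\,n^\alpha$, and the elementary inequality $2\sqrt2\,\alpha\le 4\cdot 2^{\alpha/2}$ (equivalently $\alpha\le 2^{(\alpha+1)/2}$, which holds for all $\alpha\ge 1$ since the left side is linear, the right side exponential, and the inequality is strict already at $\alpha=1$ and at the minimum of the gap) turns this into $n^{-\alpha}\|S_n^\alpha\|\le\tfrac{2^{\alpha/2}4\kb}{\alpha-1}$. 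Finally, $A_n^\alpha\sim C_\alpha n^\alpha$ shows that $\sup_n(n+1)^{-\alpha}\|S_n^\alpha\|<\infty$, i.e.\ $T$ is Ces\`aro-$\alpha$ bounded. I expect the only genuine work to be the bookkeeping in the integral estimate — checking that the chain of substitutions produces precisely those constants and that Jordan's inequality is invoked on the correct interval; the operator-valued Cauchy formula and the termwise integration are harmless because all series converge in norm on $\{|z|=r<1\}$.
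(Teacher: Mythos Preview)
Your proof is correct and follows essentially the same route as the paper: extract $S_n^\alpha$ as the $n$-th Fourier coefficient of \eqref{alpha-identity} on the circle $|z|=1-1/n$, apply the Kreiss bound to the resolvent factor, and estimate the remaining scalar integral $\int_0^{2\pi}|1-re^{i\theta}|^{-\alpha}\,d\theta$ via Jordan's inequality. The only cosmetic difference is that the paper bounds $|1-re^{i\theta}|^2\ge\tfrac12(|\theta|/\pi+1/n)^2$ and integrates that directly, obtaining the constant $2^{\alpha/2}4\kb/(\alpha-1)$ without the auxiliary step $2\sqrt2\,\alpha\le 4\cdot 2^{\alpha/2}$ that you need at the end.
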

\begin{proof} Let $n\ge 2$. Using orthogonality, and then \eqref{alpha-identity} 
with $z=\gamma(1-1/n)$, we obtain
\begin{gather*}
(1-1/n)^n S_n^\alpha =
\int_{|\gamma|=1}\gamma^{-n} \sum_{m\ge 0}S_m^\alpha \gamma^m(1-1/n)^m\, d\gamma =  \\
\int_{|\gamma|=1}\gamma^{-n} (1-\gamma(1-1/n))^{-\alpha}\sum_{m\ge 0}(\gamma(1-1/n))^m T^m \, d\gamma\, .
\end{gather*}

Using the Kreiss boundedness of $T$,  we infer that 
\begin{equation} \label{Sn-alpha}
(1-1/n)^n \|S_n^\alpha\|\le 
\frac{n\kb}{2\pi} \int_{-\pi}^\pi \frac{d\theta}{|1-{\rm e}^{i\theta} (1-1/n)|^\alpha}\ .
\end{equation}
Now, using basic computations and the fact that $|\sin u|\ge 2|u|/\pi$ for $|u|\le \pi/2$ 
(and $1-1/n\ge 1/2$), we obtain
\begin{gather*}
|1-{\rm e}^{i\theta} (1-1/n)|^2= 2(1-1/n)(1-\cos \theta)+1/n^2= 
4(1-1/n)\sin^2(\theta/2)+1/n^2 \ge \\ 
2\sin(\theta^2/2) + 1/n^2 \ge 2 \theta^2/\pi^2+1/n^2\ge \frac12(|\theta|/\pi+1/n)^2\, .
\end{gather*}
Applying this estimate in \eqref{Sn-alpha} and using $(1-1/n)^n \ge (1-1/2)^2 = 1/4$, we obtain 
$$
\|S_n^\alpha\|/4 \le (1-1/n)^n \|S_n^\alpha\|\le 
\frac{2^{\alpha/2} n \kb}{\pi} \int_0^\pi \frac{d\theta}{(\theta/\pi+1/n)^\alpha}\le 
\frac{2^{\alpha/2}\kb n^{\alpha}}{\alpha-1 }\, .
$$
\end{proof}

{\bf Remark.} For $\alpha=1$, the integral in inequality \eqref{Sn-alpha} yields
the result of \cite[Theorem 6.2]{SW}: 
{\it If $T$ is Kreiss bounded, then $\|M_n(T)\|=O(\log n)$.} This is sharp
\cite[p. 352]{SW}.

\begin{cor} \label{alpha1}
The following are equivalent for $T$ on a complex Banach space:

(i) $T$ is Kreiss bounded.

(ii) For every $\alpha >1$ we have 
\begin{equation} \label{Calpha}
\sup_{\gamma \in \mathbb T}\sup_{n\ge 0} \|M_n^{(\alpha)}(\gamma T)\| <\infty\,.
\end{equation}

(iii) \eqref{Calpha} is satisfied by $\alpha=2$.

(iv) \eqref{Calpha} is satisfied by some $\alpha> 1$.
\end{cor}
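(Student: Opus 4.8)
The plan is to prove the cyclic chain of implications $(i)\Rightarrow(ii)\Rightarrow(iii)\Rightarrow(iv)\Rightarrow(i)$, most of which is already in hand. The implication $(i)\Rightarrow(ii)$ is essentially the content of the preceding proposition together with the observation that $\gamma T$ is Kreiss bounded with the same constant whenever $T$ is and $|\gamma|=1$; indeed, $R(\lambda,\gamma T)=\gamma^{-1}R(\lambda/\gamma,T)$, so $\|R(\lambda,\gamma T)\|=\|R(\lambda/\gamma,T)\|\le \kb/(|\lambda/\gamma|-1)=\kb/(|\lambda|-1)$. Applying the proposition to each $\gamma T$ gives $\sup_{\gamma\in\TT}\sup_{n\ge 2} n^{-\alpha}\|S_n^\alpha(\gamma T)\|\le 2^{\alpha/2}4\kb/(\alpha-1)$, and since $M_n^{(\alpha)}(\gamma T)=A_n^\alpha{}^{-1}S_n^\alpha(\gamma T)$ with $A_n^\alpha\sim C_\alpha n^\alpha$, the means $\|M_n^{(\alpha)}(\gamma T)\|$ are uniformly bounded over $\gamma\in\TT$ and $n\ge 0$ (the finitely many $n\le 1$ cause no trouble). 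The implications $(ii)\Rightarrow(iii)$ and $(iii)\Rightarrow(iv)$ are trivial specializations.

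The only implication requiring real work is $(iv)\Rightarrow(i)$: assume \eqref{Calpha} holds for some fixed $\alpha>1$, and deduce the Kreiss resolvent condition. The natural approach is to recover $R(\lambda,T)$ from the C-$\alpha$ means via a summation-by-parts / Abel-type identity. Starting from \eqref{alpha-identity}, for $|z|<1$ we have $\sum_{n\ge 0}z^nT^n=(1-z)^\alpha\sum_{n\ge 0}S_n^\alpha z^n=(1-z)^\alpha\sum_{n\ge 0}A_n^\alpha M_n^{(\alpha)}(T)z^n$. Writing $\lambda=1/z$ with $|\lambda|>1$, so that $R(\lambda,T)=\sum_{n\ge 0}\lambda^{-n-1}T^n=z\sum_{n\ge 0}z^nT^n$, I would estimate
\begin{equation*}
\|R(\lambda,T)\|\le |z|\,|1-z|^\alpha \sum_{n\ge 0} A_n^\alpha \|M_n^{(\alpha)}(T)\|\,|z|^n
\le C\,|z|\,|1-z|^\alpha \sum_{n\ge 0} A_n^\alpha |z|^n,
\end{equation*}
where $C=\sup_n\|M_n^{(\alpha)}(T)\|$. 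Since $\sum_{n\ge 0}A_n^\alpha t^n=(1-t)^{-\alpha-1}$ for $0\le t<1$, this gives $\|R(\lambda,T)\|\le C|z|(1-|z|)^{-\alpha-1}|1-z|^\alpha$. For real $\lambda>1$ (so $z=1/\lambda\in(0,1)$, $|1-z|=1-z$) this already yields $\|R(\lambda,T)\|\le C|z|(1-z)^{-1}=C/( \lambda-1)$, the Kreiss bound on the positive ray. To obtain it for all $|\lambda|>1$, I apply the same computation to $\gamma T$ for each $\gamma\in\TT$, using the full strength of \eqref{Calpha}: $\|R(\mu,\gamma T)\|\le C/(|\mu|-1)$ for real $|\mu|>1$, and translating back via $R(\lambda,T)=\gamma R(\gamma\lambda,\gamma T)$ with $\gamma\lambda$ chosen real gives $\|R(\lambda,T)\|\le C/(|\lambda|-1)$ for every $|\lambda|>1$.

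The main obstacle is a bookkeeping one rather than a conceptual one: justifying the manipulations of the power series (absolute convergence of $\sum A_n^\alpha\|M_n^{(\alpha)}\|\,|z|^n$ for $|z|<1$, which follows from $\|M_n^{(\alpha)}\|\le C$ and $A_n^\alpha\sim C_\alpha n^\alpha$) and correctly tracking the rotations $\gamma$ so that the bound is genuinely uniform in the argument of $\lambda$, not just on the positive ray. One should also double-check the identity $R(\lambda,T)=\gamma R(\gamma\lambda,\gamma T)$ and that $r(T)\le 1$ under hypothesis $(iv)$ so that the Neumann series for $R(\lambda,T)$ converges for all $|\lambda|>1$ (Kreiss-type or Ces\`aro-type boundedness forces $r(T)\le 1$, as noted in the overview). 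With these points dispatched, the corollary follows.
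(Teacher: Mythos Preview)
Your proof is correct and follows essentially the same route as the paper: $(i)\Rightarrow(ii)$ via the preceding proposition applied to each $\gamma T$, the trivial implications $(ii)\Rightarrow(iii)\Rightarrow(iv)$, and $(iv)\Rightarrow(i)$ via the identity \eqref{alpha-identity} together with the generating function $\sum_n A_n^\alpha t^n=(1-t)^{-\alpha-1}$ and the rotation $R(\lambda,T)=\gamma R(\gamma\lambda,\gamma T)$. The paper merely says ``the implication that (iv) implies (i) follows for instance from \eqref{alpha-identity} (applied again with $\gamma T$) \dots\ We skip the details''; you have supplied exactly those details, including the check that $r(T)\le 1$ under (iv).
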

\begin{proof} If $T$ is Kreiss bounded, so is each $\gamma T$, with $\kb(\gamma T)=\kb(T)$.
Hence (ii) follows from the Proposition.

Clearly (ii) implies (iii), and (iii) implies (iv).  By \cite{SW} (iii) implies (i).

The implication that (iv) implies (i) follows for instance from \eqref{alpha-identity} 
(applied again with $\gamma T$), noticing that $(1-z)^\alpha= \sum_{n\ge 0} A_n^\alpha z^n$. 
We skip the details.
\end{proof}

{\bf Remark.} Strikwerda and Wade \cite[p. 95]{SW1} proved that $T$ is Kreiss bounded if 
(and only if) \eqref{Calpha} is satisfied for some {\it integer} $\alpha \ge 2$.
\medskip

%

\medskip

For $r\in (0,1)$ and $T \in B(X)$ with spectral radius $R(T)\le 1$, the series
$A_r(T):= (1-r) \sum_{n=0}^\infty r^nT^n$ converges in operator norm
(since $\limsup_n (r^n\|T^n\|)^{1/n} =r R(T) <1$). We call $A_r(T)$ the {\it Abel
mean}, and if $\sup_{0<r<1} \|A_r(T)\| < \infty$ we say that $T$ is {\it Abel bounded}.
When $\lim_{r\to 1^-} A_r(T)$ exists  strongly, we call $T$ {\it Abel ergodic}.

\begin{prop} \label{kreiss-abel}
If $T$ is Kreiss bounded, then it is Abel bounded.
\end{prop}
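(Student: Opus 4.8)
The plan is to exploit the fact that the Abel mean $A_r(T)$ is, up to a scalar factor, nothing but the resolvent $R(\lambda,T)$ evaluated at $\lambda=1/r$, a point outside the closed unit disc; the Kreiss estimate \eqref{KRC} then translates directly into a uniform bound on $\|A_r(T)\|$. So the work is essentially a change of variables and some bookkeeping.

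First I would record that the Kreiss condition forces $\sigma(T)\subseteq\{|\lambda|\le 1\}$, hence $r(T)\le 1$; consequently, as recalled just before the definition of absolute Kreiss boundedness, for every $\lambda$ with $|\lambda|>1$ the series $R(\lambda,T)=\sum_{n\ge 0}\lambda^{-n-1}T^n$ converges in operator norm, and in particular $A_r(T)=(1-r)\sum_{n\ge 0}r^nT^n$ converges in operator norm for every $r\in(0,1)$ (as already noted in the paragraph preceding Proposition~\ref{kreiss-abel}). Then, for fixed $r\in(0,1)$, I would put $\lambda=1/r$, so that $|\lambda|>1$, and observe that
$$
R(1/r,T)=\sum_{n\ge 0}r^{n+1}T^n=r\sum_{n\ge 0}r^nT^n=\frac{r}{1-r}\,A_r(T),
$$
that is, $A_r(T)=\frac{1-r}{r}\,R(1/r,T)$. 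Applying \eqref{KRC} with $\lambda=1/r$, namely $\|R(1/r,T)\|\le \kb/(1/r-1)=\kb\,r/(1-r)$, I would conclude
$$
\|A_r(T)\|=\frac{1-r}{r}\,\|R(1/r,T)\|\le \frac{1-r}{r}\cdot\frac{\kb\,r}{1-r}=\kb,
$$
and taking the supremum over $r\in(0,1)$ gives Abel boundedness, in fact with $\sup_{0<r<1}\|A_r(T)\|\le \kb(T)$.

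I do not anticipate any real obstacle: the only steps requiring a word of justification are the validity of the resolvent power series (which needs $r(T)\le 1$, immediate from \eqref{KRC}) and the arithmetic of the factor $(1-r)/r$. The substance of the statement is simply the identification of the Abel mean and the resolvent as the same object reparametrised by $\lambda=1/r$, and a strengthening of the conclusion to the explicit constant $\kb(T)$ costs nothing.
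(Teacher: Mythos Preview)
Your argument is correct and is essentially identical to the paper's own proof: both substitute $\lambda=1/r$ into the Kreiss resolvent estimate and identify $A_r(T)$ with $(1-r)\lambda R(\lambda,T)$ to obtain the uniform bound $\sup_{0<r<1}\|A_r(T)\|\le \kb(T)$. The only difference is that you spell out the factor-of-$r$ bookkeeping more explicitly.
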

\begin{proof}
The Kreiss resolvent condition implies $R(T) \le 1$. For $0<r<1$, putting $\lambda =1/r$ 
in \eqref{KRC} we obtain
$$
\|\sum_{k=0}^\infty r^kT^k\| = \|\sum_{k=0}^\infty \frac{T^k}{\lambda^k}\| =
\|\lambda R(\lambda,T)\| \le \frac{C\lambda}{\lambda-1} =\frac{C}{1-r}\ ,
$$
which proves Abel boundedness.
\end{proof}

We now review the ergodic properties of Abel means, which are mostly well-known.

\begin{prop} \label{abel0}
Let $T$  be Abel bounded. The following are equivalent for $x \in X$:

(i) $\lim_{r \to 1^-}A_r(T)x =0$.

(ii) $x \in \overline{(I-T)X}$.

(iii) $x^*(x)=0$ for every $x^* \in  X^*$ with $T^*x^*=x^*$.

(iv) $A_{r_j}x \to 0$ weakly for some subsequence $r_j \to 1^-$.
\end{prop}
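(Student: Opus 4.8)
The plan is to prove the cyclic chain of implications (ii)$\Rightarrow$(i)$\Rightarrow$(iv)$\Rightarrow$(iii)$\Rightarrow$(ii), which is the standard mean ergodic argument transcribed to Abel means. The starting point is the algebraic identity
$$
A_r(T)(I-T)=\frac{1-r}{r}\bigl(I-A_r(T)\bigr),
$$
obtained by a term-by-term computation with the operator-norm convergent series defining $A_r(T)$ (the convergence being guaranteed by $R(T)\le 1$, as recalled just before the Proposition). Since $T$ is Abel bounded, $M:=\sup_{0<r<1}\|A_r(T)\|<\infty$, so the right-hand side has norm at most $(1+M)(1-r)/r$, which tends to $0$ as $r\to 1^-$; hence $A_r(T)(I-T)y\to 0$ for every $y\in X$. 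Consequently $A_r(T)z\to 0$ for every $z\in(I-T)X$, and an $\varepsilon$-argument using the uniform bound $M$ extends this to every $z\in\overline{(I-T)X}$, which is exactly (ii)$\Rightarrow$(i).

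The implication (i)$\Rightarrow$(iv) is trivial (norm convergence along the full net implies weak convergence along a subsequence). For (iv)$\Rightarrow$(iii), I would use that operator-norm convergence of the Abel series gives $A_r(T)^*=A_r(T^*)$; if $T^*x^*=x^*$, then $A_r(T^*)x^*=(1-r)\sum_{n\ge 0}r^n x^*=x^*$, so $x^*(A_r(T)x)=\bigl(A_r(T^*)x^*\bigr)(x)=x^*(x)$ for every $r\in(0,1)$. Letting $r=r_j\to 1^-$ along the subsequence for which $A_{r_j}(T)x\to 0$ weakly forces $x^*(x)=0$. Finally, (iii)$\Rightarrow$(ii) is a Hahn--Banach separation: if $x\notin\overline{(I-T)X}$, pick $x^*\in X^*$ vanishing on $\overline{(I-T)X}$ with $x^*(x)\neq 0$; then $x^*\bigl((I-T)y\bigr)=0$ for all $y\in X$ says precisely that $T^*x^*=x^*$, contradicting (iii). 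Hence $x\in\overline{(I-T)X}$.

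I do not expect a genuine obstacle here, since this is a soft-analysis statement: the only points needing a word of care are the operator-norm convergence of the Abel series (already available from $R(T)\le 1$), which legitimizes $A_r(T)^*=A_r(T^*)$, and the density/uniform-boundedness step in (ii)$\Rightarrow$(i); both are routine once Abel boundedness of $T$ is in hand.
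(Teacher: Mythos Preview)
Your proof is correct and follows essentially the same cyclic chain (ii)$\Rightarrow$(i)$\Rightarrow$(iv)$\Rightarrow$(iii)$\Rightarrow$(ii) as the paper, which merely sketches the steps (noting $\|A_r(T)(I-T)\|\to 0$, citing Hahn--Banach for (iii)$\Rightarrow$(ii)). Your version supplies the explicit algebraic identity and the $A_r(T)^*=A_r(T^*)$ argument for (iv)$\Rightarrow$(iii), which the paper leaves implicit; otherwise the approaches coincide.
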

\begin{proof}
It is easy to show $\lim_{r \to 1^-} \|A_r(T)(I-T)\|=0$, so (ii) implies (i).
(i) implies (iv)  and (iv) easily implies (iii). (iii) implies (ii) by the Hahn-Banach theorem
(see \cite[Lemma 3.3]{LSS}.
\end{proof}

\begin{prop} \label{abel-conv}
Let $T$ be Abel bounded on $X$, and put $F(T):= \{y: Ty=y\}$.  Then:

(i) $Y:=F(T)\oplus \overline{(I-T)X}$ is closed.

(ii) $A_rx $ converges as $r \to 1^-$ if and only if $x \in Y$.
\end{prop}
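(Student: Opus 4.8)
The plan is to run the classical mean-ergodic decomposition argument with the Abel means $A_r(T)$ in place of the Cesàro averages, using Proposition \ref{abel0} as the main input. Set $M:=\sup_{0<r<1}\|A_r(T)\|<\infty$. First I would check that the sum $F(T)+\overline{(I-T)X}$ is direct: if $v$ lies in the intersection, then $Tv=v$ gives $A_r(T)v=v$ for every $r\in(0,1)$, while $v\in\overline{(I-T)X}$ gives $A_r(T)v\to0$ by Proposition \ref{abel0}; hence $v=0$. So $Y=F(T)\oplus\overline{(I-T)X}$ is a genuine direct sum, and we may define $P:Y\to F(T)$ by $P(y+z)=y$ for $y\in F(T)$, $z\in\overline{(I-T)X}$.

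For part (i) the key point is that $P$ is bounded on $Y$: for $x=y+z\in Y$ we have $A_r(T)x=y+A_r(T)z\to y=Px$ (using $A_r(T)y=y$ together with Proposition \ref{abel0}), hence $\|Px\|=\lim_{r\to1^-}\|A_r(T)x\|\le M\|x\|$. Now take $x_n\in Y$ with $x_n\to x$ in $X$. Then $(Px_n)$ is Cauchy, and since $F(T)=\ker(I-T)$ is closed, $Px_n\to y$ for some $y\in F(T)$. Consequently $x_n-Px_n\to x-y$, and as each $x_n-Px_n$ lies in the closed subspace $\overline{(I-T)X}$, we get $x-y\in\overline{(I-T)X}$. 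Therefore $x=y+(x-y)\in Y$, which shows $Y$ is closed.

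For part (ii), the ``if'' direction is immediate: if $x=y+z\in Y$ then $A_r(T)x=y+A_r(T)z\to y$ by Proposition \ref{abel0}. For the ``only if'' direction, assume $A_r(T)x\to w$ as $r\to1^-$. Starting from the identity $(I-rT)A_r(T)=(1-r)I$ (obtained by telescoping the geometric series), we have $A_r(T)x-rTA_r(T)x=(1-r)x$; letting $r\to1^-$ and using continuity of $T$ yields $w-Tw=0$, so $w\in F(T)$. Then $A_r(T)(x-w)=A_r(T)x-w\to0$, so by the equivalence (i)$\Leftrightarrow$(ii) of Proposition \ref{abel0} we conclude $x-w\in\overline{(I-T)X}$, whence $x=w+(x-w)\in Y$.

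I do not expect a serious obstacle: the one place needing care is the uniform bound $\|Px\|\le M\|x\|$ in part (i), which is exactly where Abel boundedness enters and is what makes the closedness argument work; the other slightly delicate point is invoking $(I-rT)A_r(T)=(1-r)I$ correctly to identify $w$ as a fixed vector. The rest is a routine repackaging of Proposition \ref{abel0}.
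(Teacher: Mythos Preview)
Your argument is correct and follows the same classical mean-ergodic scheme as the paper. The only organizational difference is that the paper proves (ii) first and then obtains (i) in one line by observing that $Y$ is precisely the set of convergence of the bounded family $\{A_r(T)\}$ (hence closed), whereas you prove (i) directly via the bound $\|Px\|\le M\|x\|$ and a Cauchy-sequence argument; for the ``only if'' direction the paper identifies the limit as a fixed vector using $\|A_r(T)(I-T)\|\to 0$ rather than your resolvent identity $(I-rT)A_r(T)=(1-r)I$, but these are equivalent computations.
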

\begin{proof}
For $y \in F(T)$ we have $A_r(T)y=y$; from Proposition \ref{abel0} we obtain that
$F(T)\cap \overline{(I-T)X} =\{0\}$, and that the convergence holds for $x \in Y$.

Assume $A_{r_j}x \to y$ weakly. Since $\|A_r(T)(I-T)\| \to 0$, we have $Ty=y$, and by
Proposition \ref{abel0} we obtain $x-y \in \overline{(I-T)X}$; hence $x \in Y$.
This proves (ii). Then (ii) implies that $Y$ is the set of convergence of $A_r(T)$;
since $\{A_r(T)\}_r$ is bounded, $Y$ is closed.
\end{proof}

The following result is well-known; see \cite[Proposition 3.4]{LSS}.
\begin{cor}
Let $T$ be Abel bounded. Then $T$ is Abel ergodic if and only if
\begin{equation}\label{dec}
X= F(T)\oplus \overline{(I-T)X}\ .
\end{equation}
When $T$ is Abel ergodic, $Ex:=\lim_{r\to 1^-} A_rx$ is the projection on $F(T)$
corresponding to \eqref{dec}.
\end{cor}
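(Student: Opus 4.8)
The plan is to read this off directly from Proposition \ref{abel-conv}, which already contains the substantive content. By definition $T$ is Abel ergodic precisely when $\lim_{r\to 1^-}A_r(T)x$ exists for every $x\in X$; and by part (ii) of Proposition \ref{abel-conv}, for a fixed $x$ this limit exists if and only if $x\in Y:=F(T)\oplus\overline{(I-T)X}$. Hence Abel ergodicity is equivalent to $Y=X$, which is exactly \eqref{dec}. (The sum is genuinely direct by the first half of Proposition \ref{abel-conv}(i), equivalently by Proposition \ref{abel0}, which gives $F(T)\cap\overline{(I-T)X}=\{0\}$.)

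For the identification of $E:=\lim_{r\to 1^-}A_r(T)$ as the projection associated to \eqref{dec}, I would first note that $E$ is a well-defined bounded operator: it is the strong (pointwise) limit on all of $X$ of the family $\{A_r(T)\}_{0<r<1}$, which is uniformly bounded by hypothesis, so $E\in B(X)$ with $\|E\|\le\sup_{0<r<1}\|A_r(T)\|$. Then I evaluate $E$ on each summand of \eqref{dec}. On $F(T)$: if $Ty=y$ then $A_r(T)y=(1-r)\sum_{n\ge 0}r^n y=y$ for every $r\in(0,1)$, hence $Ey=y$. On $\overline{(I-T)X}$: by the equivalence of (i) and (ii) in Proposition \ref{abel0}, $\lim_{r\to 1^-}A_r(T)x=0$ for every such $x$, hence $Ex=0$. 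Since $X=F(T)\oplus\overline{(I-T)X}$, this shows $E$ is idempotent with range $F(T)$ and kernel $\overline{(I-T)X}$, i.e. the projection corresponding to \eqref{dec}.

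There is essentially no obstacle here: the statement is a corollary of Propositions \ref{abel0} and \ref{abel-conv}. The only point needing a line of care is that $E$ is a bounded linear operator, for which one invokes Abel boundedness (uniform boundedness of $\{A_r(T)\}_r$) together with the fact that a strong limit of a uniformly bounded net of operators, defined on all of $X$, is bounded; the rest is a direct reading-off.
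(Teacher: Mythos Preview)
Your argument is correct and is exactly how the corollary follows from Propositions \ref{abel0} and \ref{abel-conv}; the paper itself does not give a proof but simply cites the result as well-known (referring to \cite[Proposition 3.4]{LSS}). Your derivation is the natural one and needs no change.
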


\begin{cor} \label{abel-reflex}
Let $T$ be Abel bounded on a reflexive space. Then $T$ is Abel ergodic.
\end{cor}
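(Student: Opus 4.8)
The plan is to deduce Corollary~\ref{abel-reflex} from the preceding Corollary (the characterization of Abel ergodicity via the decomposition \eqref{dec}) together with Proposition~\ref{abel-conv}. Since $T$ is Abel bounded, by that Corollary it suffices to verify $X = F(T)\oplus \overline{(I-T)X}$. By Proposition~\ref{abel-conv}(i) the subspace $Y:=F(T)\oplus\overline{(I-T)X}$ is already closed, so the only thing to prove is that $Y=X$, i.e. that $Y$ is dense, or equivalently (since it is closed) that any $x^*\in X^*$ vanishing on $Y$ must be zero.

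First I would fix such an $x^*$ with $x^*|_Y=0$; in particular $x^*$ vanishes on $(I-T)X$, which says exactly $T^*x^*=x^*$, i.e. $x^*\in F(T^*)$. The key observation is that $T^*$ is Abel bounded on $X^*$ whenever $T$ is Abel bounded on $X$, because $A_r(T^*)=A_r(T)^*$ and adjoints preserve norms. Now I would apply Proposition~\ref{abel0}, but to the operator $T^*$ and the element $x^*$. Since reflexivity gives $X^{**}=X$, boundedness of $\{A_r(T)\}$ implies that for any fixed $x\in X$ the net $A_r(T^*)x^* = A_r(T)^*x^*$ has, by reflexivity and Banach--Alaoglu, a weakly convergent subnet in $X^*$; more cleanly, one can run Proposition~\ref{abel0} on $T^*$ directly: a fixed point $x^*$ of $T^*$ satisfies $A_r(T^*)x^*=x^*$ for all $r$, so trivially $A_{r_j}(T^*)x^*\to x^*$, and hence by the equivalence (iv)$\Rightarrow$(iii) (applied with roles of space and dual swapped, legitimate in the reflexive setting) we get that $x^{**}(x^*)=0$ for every $x^{**}\in X^{**}$ fixed by $T^{**}$. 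Taking $x^{**}$ to be evaluation at any $y\in F(T)$ (note $T^{**}$ extends $T$, so such $y$ gives a $T^{**}$-fixed functional), we conclude $x^*(y)=0$ for all $y\in F(T)$ — which we already knew. The actual force comes from applying Proposition~\ref{abel0}'s equivalence the other way.

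Let me restructure more carefully, since the cleanest route is: given $x\in X$, I want $x\in Y$. By reflexivity and Abel boundedness, the net $\{A_r(T)x\}_{r\to 1^-}$ lies in a weakly compact ball, so some subnet $A_{r_j}(T)x$ converges weakly to some $y\in X$. Since $\|A_r(T)(I-T)\|\to 0$ (the easy estimate noted in the proof of Proposition~\ref{abel0}), applying $(I-T)$ and passing to the weak limit gives $y\in F(T)$, and moreover $x-y = \lim_j A_{r_j}(T)(x) - y$; one checks $x-y\in\overline{(I-T)X}$ exactly as in the proof of Proposition~\ref{abel-conv}(ii), using Proposition~\ref{abel0}(iv)$\Rightarrow$(ii) applied to $x-y$ (its weak Abel limits along $r_j$ are $0$). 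Hence $x=y+(x-y)\in F(T)\oplus\overline{(I-T)X}=Y$. Since $x\in X$ was arbitrary, $X=Y$, and the previous Corollary then yields that $T$ is Abel ergodic, with $Ex=\lim_{r\to 1^-}A_r(T)x$ the projection onto $F(T)$.

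The only mild subtlety — and the step I would be most careful about — is the extraction of the weakly convergent subnet and the interchange of the weak limit with $(I-T)$: one needs that $\{A_r(T)x : r\in(0,1)\}$ is norm-bounded (immediate from Abel boundedness) so that its weak closure is weakly compact by reflexivity (Eberlein--Šmulian / Banach--Alaoglu), and that $(I-T)$ is weakly continuous (automatic, as every bounded operator is weak-weak continuous). Everything else is a direct citation of Proposition~\ref{abel0} and Proposition~\ref{abel-conv}, so the argument is short; I would present it in essentially the three or four lines above.
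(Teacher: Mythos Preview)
Your restructured argument (the second half) is correct and is exactly the paper's approach: extract a weakly convergent subsequence $A_{r_j}(T)x\to y$ by reflexivity, then invoke the argument of Proposition~\ref{abel-conv}(ii) to conclude $x\in Y$ and hence strong convergence of $A_r(T)x$. The initial dual-space attempt is a detour that you rightly abandon; in a clean write-up you should drop it entirely and present only the three-line weak-compactness argument.
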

\begin{proof}
For $x\in X$ there exists $r_j \to 1^-$ such that $A_{r_j}(T)x $ converges weakly, say to $y$.
As before, we see that $A_r(T)x$ converges strongly.
\end{proof}

\begin{cor} 
Let $T$ be Kreiss bounded on a reflexive space. Then $T$ is Abel ergodic.
\end{cor}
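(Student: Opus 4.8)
The plan is to obtain this corollary by chaining the two immediately preceding results, so that essentially nothing new needs to be proved. First I would invoke Proposition \ref{kreiss-abel}: since $T$ is Kreiss bounded, the Kreiss resolvent condition \eqref{KRC} forces the spectral radius $R(T)\le 1$, so the Abel means $A_r(T)=(1-r)\sum_{n\ge 0}r^nT^n$ are defined in operator norm for $r\in(0,1)$; substituting $\lambda=1/r$ in \eqref{KRC} gives $\|A_r(T)\|\le C$ uniformly in $r$, i.e. $T$ is Abel bounded.

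Second, since the underlying space is reflexive, I would apply Corollary \ref{abel-reflex}: for each $x$ the bounded net $\{A_r(T)x\}_{0<r<1}$ admits a weakly convergent subsequence along some $r_j\to 1^-$, and the standard estimate $\|A_r(T)(I-T)\|\to 0$ (used in Proposition \ref{abel0} and Proposition \ref{abel-conv}) upgrades this weak subsequential convergence of the net to strong convergence of the full net. Hence $T$ is Abel ergodic; moreover, by the corollary preceding \ref{abel-reflex}, the limit $Ex=\lim_{r\to 1^-}A_r(T)x$ is the projection onto $F(T)$ associated with the decomposition \eqref{dec}.

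There is no real obstacle here, since both substantive ingredients — the resolvent estimate producing Abel boundedness, and the reflexivity argument turning weak subsequential limits into strong ones — are already established. The only point to verify is compatibility of hypotheses: Kreiss boundedness is formulated for operators on complex Banach spaces, and reflexivity is exactly the extra assumption that Corollary \ref{abel-reflex} requires, so the statement follows at once by combining Proposition \ref{kreiss-abel} with Corollary \ref{abel-reflex}.
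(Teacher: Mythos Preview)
Your proposal is correct and follows exactly the paper's intended argument: the corollary is an immediate consequence of Proposition~\ref{kreiss-abel} (Kreiss bounded implies Abel bounded) combined with Corollary~\ref{abel-reflex} (Abel bounded on a reflexive space implies Abel ergodic). The paper states this corollary without proof precisely because it is obtained by concatenating those two results.
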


\begin{theo} \label{sw-dec}
Let $T$ on $X$ satisfy the Kreiss resolvent condition (\ref{KRC}). Then for $\alpha>1$,
$M_n^{(\alpha)}(T)x$ converges if and only if $x \in F(T)\oplus  \overline{(I-T)X}$.
\end{theo}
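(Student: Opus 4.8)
The plan is to reduce the convergence of the Ces\`aro means $M_n^{(\alpha)}(T)x$ to the already-established theory of Abel means. By Proposition \ref{kreiss-abel}, Kreiss boundedness gives Abel boundedness, and by Proposition \ref{abel-conv} the set $Y:=F(T)\oplus\overline{(I-T)X}$ is closed and equals the set of strong convergence of $A_r(T)$. On the other hand, by the Proposition preceding Corollary \ref{alpha1}, Kreiss boundedness yields $\sup_n\|M_n^{(\alpha)}(T)\|<\infty$ for every $\alpha>1$, so the set of $x$ for which $M_n^{(\alpha)}(T)x$ converges is a closed subspace; hence it suffices to identify that subspace with $Y$ on a dense set, and then invoke boundedness.

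First I would check the ``easy'' inclusion: if $x\in F(T)$ then $M_n^{(\alpha)}(T)x=x$ for all $n$ (since $S_n^\alpha(T)x=(\sum_{k=0}^n A_{n-k}^{\alpha-1})x=A_n^\alpha x$), and if $x=(I-T)z$ then a summation-by-parts / telescoping argument shows $M_n^{(\alpha)}(T)(I-T)z\to 0$ — this is the standard Ces\`aro-mean analogue of $\|A_r(T)(I-T)\|\to 0$, and follows from $S_n^\alpha(T)-T S_n^\alpha(T)=S_n^{\alpha-1}(T)$ together with $\|S_n^{\alpha-1}\|=O(n^{\alpha-1})\cdot\text{(bound)}$, in fact using Ces\`aro-$\alpha$ boundedness of $T$ and that $A_n^{\alpha-1}/A_n^\alpha\to 0$. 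So $M_n^{(\alpha)}(T)x$ converges for every $x\in Y$, with limit the projection $E$ onto $F(T)$ along $\overline{(I-T)X}$.

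For the converse, suppose $M_{n}^{(\alpha)}(T)x$ converges, say to $y$. Using $S_n^\alpha-TS_n^\alpha=S_n^{\alpha-1}$ and the growth estimate $\|S_n^{\alpha-1}\|=O(n^{\alpha-1})$ (which is $o(n^\alpha)$), one gets $M_n^{(\alpha)}(T)x-TM_n^{(\alpha)}(T)x\to 0$, hence $Ty=y$, i.e. $y\in F(T)$. It then remains to show $x-y\in\overline{(I-T)X}$; by Proposition \ref{abel0}(iii) this is equivalent to $x^*(x-y)=0$ for every $x^*\in X^*$ with $T^*x^*=x^*$. For such $x^*$, $x^*(M_n^{(\alpha)}(T)x)=x^*(x)$ for all $n$ (since $(M_n^{(\alpha)}(T))^*x^*=M_n^{(\alpha)}(T^*)x^*=x^*$), so $x^*(x)=x^*(y)$ and we are done. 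This shows the convergence set is exactly $Y$.

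The main obstacle I anticipate is the careful handling of the operator identities for Ces\`aro means of order $\alpha$ with $\alpha$ not an integer — specifically verifying $S_n^\alpha(T)-TS_n^\alpha(T)=S_n^{\alpha-1}(T)$ and the relation $A_n^\alpha=\sum_{k=0}^n A_{n-k}^{\alpha-1}$ which is quoted in the excerpt, and then extracting from the Ces\`aro-$\alpha$ boundedness (established in the Proposition before Corollary \ref{alpha1}) a clean bound $\|S_n^{\alpha-1}(T)\|=O(n^{\alpha-1}\log n)$ or similar, enough to conclude $M_n^{(\alpha)}(T)-TM_n^{(\alpha)}(T)\to 0$ in norm. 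Once that identity and estimate are in hand, everything else is a routine density-plus-uniform-boundedness argument; alternatively, one could bypass part of this by directly comparing $M_n^{(\alpha)}(T)$ with the Abel means $A_r(T)$ via the generating function identity \eqref{alpha-identity} and a Tauberian-type estimate, but the summation-by-parts route seems cleaner and self-contained.
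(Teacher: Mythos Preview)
Your overall strategy is reasonable, and your converse argument via $T^*$-invariant functionals is actually cleaner than the paper's (which instead invokes Hille's Tauberian theorem \cite[Theorem~7]{Hi}: $(C,\alpha)$-convergence implies Abel convergence, hence $x\in Y$ by Proposition~\ref{abel-conv}). However, there is a genuine gap in the forward direction when $1<\alpha<2$, and it is precisely the obstacle you flag but do not resolve. First, the identity you quote is incorrect: a direct computation gives $(I-T)S_n^\alpha = A_{n+1}^{\alpha-1}I - S_{n+1}^{\alpha-1}$, not $S_n^{\alpha-1}$. More importantly, the estimate you need, $\|S_{n+1}^{\alpha-1}\|=o(n^\alpha)$, does \emph{not} follow from Ces\`aro-$\alpha$ boundedness of $T$: the only algebraic relation is $S_n^{\alpha-1}=S_n^\alpha-S_{n-1}^\alpha$, which yields only $O(n^\alpha)$. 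For $\alpha-1\in(0,1)$ Kreiss boundedness does not imply $(C,\alpha-1)$-boundedness (already $(C,1)$-boundedness can fail, by the Strikwerda--Wade example), so your hoped-for bound $O(n^{\alpha-1}\log n)$ is unsupported. One can in fact prove $\|S_n^{\alpha-1}\|=O(n)$ by rerunning the resolvent-integral argument behind \eqref{Sn-alpha} with exponent $\alpha-1$ (the integral $\int_{-\pi}^{\pi}|1-e^{i\theta}(1-1/n)|^{-(\alpha-1)}\,d\theta$ is bounded uniformly in $n$ when $\alpha-1<1$), and this would close your gap; but this is a separate, nontrivial step that your proposal does not supply.

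The paper avoids this difficulty by a different device: for $1<\alpha<2$ it works with $(I-T)^2$ instead of $(I-T)$. It cites \cite[Proposition~2.4]{ED} to obtain $M_n^{(\alpha)}(T)(I-T)^2x\to 0$ for every $x$ directly from $\|T^n\|=O(n)$ (the underlying reason being that two applications of $(I-T)$ bring in coefficients $A_m^{\alpha-3}$, which are absolutely summable for $\alpha<2$, so the crude growth $\|T^k\|=O(k)$ suffices). It then proves $\overline{(I-T)^2X}=\overline{(I-T)X}$ using the Abel means on the restriction of $T$ to $\overline{(I-T)X}$, and concludes by density and uniform boundedness of $\{M_n^{(\alpha)}\}$. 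Convergence for $\alpha\ge 2$ is then inherited from the case $1<\alpha<2$ via \cite[Theorem~III.1.21]{Zy}. So the paper's key idea---squaring $(I-T)$ and recovering the full closure afterwards---is exactly what replaces the missing $\|S_n^{\alpha-1}\|$ estimate in your approach.
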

\begin{proof}
Since $T$ is Abel bounded, by Proposition \ref{abel-conv}  $Y:=F(T)\oplus\overline{(I-T)X}$ 
is closed. If $M_n^{(\alpha)}(T)x$ converges, then by \cite[Theorem 7]{Hi} $A_r(T)x$ converges
as $r \to 1^-$, so $x\in Y$, by Proposition \ref{abel-conv}.

Fix $1<\alpha < 2$. Corollary \ref{alpha1} implies that $\{\|M_n^{(\alpha)}(T)\|\}$ is 
bounded. Since $M_n^{(\alpha)}(T)x$ is a weighted average of $\{x, Tx,\dots, T^nx\}$, 
it trivially converges for $x \in F(T)$.  

Since $T$ is Kreiss bounded, we have $\|T^n\| =O(n)$ \cite{LN}.
For our $\alpha$, it then follows from \cite[Proposition 2.4]{ED} that 
$\|M_n^{(\alpha)}(T)(I-T)^2x\| \to 0$ for every $x \in X$. We now prove that 
$\overline{(I-T)^2X} = \overline{(I-T)X}$. 
Put $Z:= \overline{(I-T)X}$, which is $T$-invariant, and denote $S:=T_{|Z}$.
Let $y \in Z$. By Proposition \ref{abel0}, $\|A_r(S)y\| =\|A_r(T)y\| \to 0$
as $r \to 1^-$. Hence $y \in \overline{(I-S)Z}$. For $\delta> 0$ there exists
$z  \in Z$ with $\|y-(I-T)z\| < \delta$; then there exists
$x$ with $\|z-(I-T)x\| < \delta/\|I-T\|$. Hence $\|y -(I-T)^2x\|< 2\delta$.
This proves that $Z \subset \overline{(I-T)^2X}$; the converse inclusion is obvious.
Since $\{M_n^{(\alpha)}\}_n$ is bounded, we conclude that $M_n^{(\alpha)}x$ 
converges for every $x \in Y$.

For $\beta \ge 2$, convergence of $M_n^{(\alpha)}x$ implies that of $M_n^{(\beta)}x$
by \cite[Theorem III.1.21]{Zy}.
\end{proof}
%

{\bf Remarks.} 1. The proof that $M_n^{(2)}(T)x$ converges when 
$x\in F(T)\oplus \overline{(I-T)X}$ is in \cite[Theorem 3.4(i)]{SZ}.

2. For $T$ as in Theorem \ref{sw-dec}, put $Y:= F(T) \oplus \overline{(I-T)X}$, which is 
closed by Proposition \ref{abel-conv}, and obviously $T$-invariant. 
Let $S$ be the restriction of $T$ to $Y$.
Then $S$ satisfies the Kreiss resolvent condition by  (\ref{sw}). Proposition \ref{abel-conv}
yields that $Y=F(S) \oplus \overline{(I-S)Y}$. 

3. For any given $\alpha > 0$, Ed-Dari \cite{ED} gave necessary and sufficient conditions 
for strong convergence of $M_n^{(\alpha)}$.

\begin{cor} 
Let $T$ be Kreiss bounded on a reflexive space. Then for $\alpha >1$,
$M_n^{(\alpha)}(T)x$ converges for every $x \in X$.
\end{cor}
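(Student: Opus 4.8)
The plan is to combine Theorem~\ref{sw-dec} with Corollary~\ref{abel-reflex} (or directly with the preceding corollary that Kreiss boundedness on a reflexive space implies Abel ergodicity). Since $T$ is Kreiss bounded, by Proposition~\ref{kreiss-abel} it is Abel bounded, and on a reflexive space Corollary~\ref{abel-reflex} gives that $T$ is Abel ergodic. By the corollary characterizing Abel ergodicity, this means exactly that $X = F(T)\oplus\overline{(I-T)X}$. Feeding this decomposition into Theorem~\ref{sw-dec} — which says that for $\alpha>1$, $M_n^{(\alpha)}(T)x$ converges if and only if $x\in F(T)\oplus\overline{(I-T)X}$ — we conclude that $M_n^{(\alpha)}(T)x$ converges for every $x\in X$.

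In more detail, the key steps in order are: first, invoke Proposition~\ref{kreiss-abel} to get Abel boundedness of $T$; second, apply Corollary~\ref{abel-reflex} (reflexivity) to upgrade Abel boundedness to Abel ergodicity; third, use the stated characterization that Abel ergodicity is equivalent to the direct-sum decomposition $X=F(T)\oplus\overline{(I-T)X}$; fourth, apply Theorem~\ref{sw-dec} with this $x$ ranging over all of $X$. Each step is a direct citation of an earlier result in the excerpt, so the proof is essentially a two-line deduction.

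There is no real obstacle here — the content is entirely contained in Theorem~\ref{sw-dec} and the reflexive Abel-ergodicity corollary. The only thing to be careful about is making sure the decomposition $X=F(T)\oplus\overline{(I-T)X}$ is genuinely available: this requires knowing that $F(T)\cap\overline{(I-T)X}=\{0\}$ (from Proposition~\ref{abel0}, applied via the fixed vectors of $A_r(T)$) and that the whole space is covered, which is precisely what Abel ergodicity on a reflexive space delivers through Proposition~\ref{abel-conv}. So the proof writes itself:

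\begin{proof}
By Proposition~\ref{kreiss-abel}, $T$ is Abel bounded, so by Corollary~\ref{abel-reflex} it is Abel ergodic. Hence $X = F(T)\oplus\overline{(I-T)X}$. By Theorem~\ref{sw-dec}, $M_n^{(\alpha)}(T)x$ converges for every $x\in X$ when $\alpha>1$.
\end{proof}
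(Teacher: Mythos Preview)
Your proof is correct and follows essentially the same route as the paper: establish Abel ergodicity via Proposition~\ref{kreiss-abel} and Corollary~\ref{abel-reflex}, obtain the decomposition $X=F(T)\oplus\overline{(I-T)X}$, and then apply Theorem~\ref{sw-dec}. The paper's proof is just the compressed two-line version of what you wrote.
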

\begin{proof} $T$ is Abel ergodic, and the decomposition \eqref{dec} holds.
\end{proof}

\begin{cor} \label{skr-ME}
Let $T$ on a Banach space $X$ satisfy the strong Kreiss resolvent condition  (\ref{SKR}). 
Then $M_n(T)x:=\frac1n\sum_{k=0}^{n-1} T^k x$ converges if and only if 
$ x \in F(T)\oplus \overline{(I-T)X}$.
\end{cor}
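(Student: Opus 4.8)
The plan is to deduce Corollary \ref{skr-ME} from Theorem \ref{sw-dec} by exploiting the fact that the strong Kreiss resolvent condition \eqref{SKR} implies both the ordinary Kreiss condition \eqref{KRC} (take $k=1$) and, via the Gomilko--Zem\'anek result quoted in the introduction, the uniform Kreiss condition \eqref{UKR}, hence \eqref{rotated}, and in particular that $T$ is Ces\`aro bounded. More importantly, \eqref{SKR} gives $\|T^n\|=O(\sqrt n)$ by \cite{McC},\cite{LN}, which is a much better growth rate than the $O(n)$ available under \eqref{KRC} alone. The point is that for a sequence with $\|T^n\|=O(\sqrt n)=o(n)$, the Abel/Ces\`aro machinery forces the C-$1$ means to behave; the $\alpha>1$ in Theorem \ref{sw-dec} can be pushed down to $\alpha=1$.

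Here is how I would carry it out. First, the ``only if'' direction: if $M_n(T)x$ converges then the C-$1$ means converge, so by Abelian summation (e.g. \cite[Theorem 7]{Hi}, exactly as in the proof of Theorem \ref{sw-dec}) $A_r(T)x$ converges as $r\to1^-$; since \eqref{SKR} implies Kreiss boundedness, $T$ is Abel bounded by Proposition \ref{kreiss-abel}, so Proposition \ref{abel-conv} gives $x\in F(T)\oplus\overline{(I-T)X}$. Second, the ``if'' direction: $T$ is Ces\`aro bounded (from \eqref{rotated} with $\gamma=1$, or directly $\alpha=1$ is covered by the Gomilko--Zem\'anek route), so $\{\|M_n(T)\|\}$ is bounded; convergence on $F(T)$ is trivial since $M_n(T)y=y$ there. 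For $x\in\overline{(I-T)X}$ one runs the same two-step argument as in Theorem \ref{sw-dec}: using $\|T^n\|=O(\sqrt n)=o(n)$, invoke \cite[Proposition 2.4]{ED} (or the classical fact that for $\|T^n\|=o(n)$ the C-$1$ means of $(I-T)x$ tend to zero --- indeed $\frac1n\sum_{k=0}^{n-1}T^k(I-T)x=\frac1n(x-T^nx)\to0$ directly!) to get $\|M_n(T)(I-T)x\|\to0$, then show $\overline{(I-T)X}=\overline{(I-T)^2X}$... but in fact here it is even simpler, because $\frac1n(I-T^n)x\to 0$ already when $\|T^n\|=o(n)$, which holds since $\|T^n\|=O(\sqrt n)$. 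So $M_n(T)(I-T)x\to0$ on the dense subspace $(I-T)X$ of $\overline{(I-T)X}$, and boundedness of $\{M_n(T)\}$ extends this to all of $\overline{(I-T)X}$. Combining the two summands yields convergence on all of $Y=F(T)\oplus\overline{(I-T)X}$, which is closed by Proposition \ref{abel-conv}.

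The main obstacle is essentially cosmetic: making sure the Ces\`aro-boundedness of $T$ (needed to extend convergence from the dense subspace $(I-T)X$ to its closure) is cleanly available under \eqref{SKR}. This is where I would cite Gomilko--Zem\'anek \cite{GZ} (\eqref{SKR} $\Rightarrow$ \eqref{UKR} $\Rightarrow$ \eqref{rotated}) together with the elementary observation that \eqref{rotated} with $\lambda=1$ is precisely Ces\`aro boundedness; alternatively one notes $\|M_n(T)\|\le \frac1n\sum_{k=1}^n\|T^k\|=O(\frac1n\sum_{k=1}^n\sqrt k)=O(\sqrt n)$, which is not bounded --- so the Gomilko--Zem\'anek route (or \eqref{rotated}) is genuinely needed and is the one substantive input beyond the Abel-mean lemmas. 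Everything else is a transcription of the $\alpha=1$ case of the argument already given for Theorem \ref{sw-dec}.

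\begin{proof}
Condition \eqref{SKR} implies \eqref{KRC} (take $k=1$), so $T$ is Kreiss bounded, hence Abel bounded by Proposition \ref{kreiss-abel}. By Gomilko and Zem\'anek \cite{GZ}, \eqref{SKR} implies the uniform Kreiss condition \eqref{UKR}, hence \eqref{rotated}; taking $\lambda=1$ in \eqref{rotated} shows that $T$ is Ces\`aro bounded, i.e.\ $\{\|M_n(T)\|\}_n$ is bounded. Moreover $\|T^n\|=O(\sqrt n)$ by \cite{McC}, \cite{LN}, so in particular $\|T^n\|=o(n)$.

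Suppose first that $M_n(T)x$ converges. Then by Abelian summation \cite[Theorem 7]{Hi}, $A_r(T)x$ converges as $r\to 1^-$. Since $T$ is Abel bounded, Proposition \ref{abel-conv} gives $x\in F(T)\oplus\overline{(I-T)X}$.

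Conversely, let $x\in Y:=F(T)\oplus\overline{(I-T)X}$, which is closed by Proposition \ref{abel-conv}. Write $x=y+z$ with $y\in F(T)$ and $z\in\overline{(I-T)X}$. Since $M_n(T)y=y$, it suffices to show $M_n(T)z$ converges (to $0$). If $z=(I-T)w$ for some $w\in X$, then
\[
M_n(T)z=\frac1n\sum_{k=0}^{n-1}T^k(I-T)w=\frac1n\,(w-T^nw)\longrightarrow 0,
\]
because $\|T^n\|=o(n)$. Thus $M_n(T)z\to 0$ on the dense subspace $(I-T)X$ of $\overline{(I-T)X}$, and since $\{\|M_n(T)\|\}_n$ is bounded, $M_n(T)z\to 0$ for every $z\in\overline{(I-T)X}$. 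Hence $M_n(T)x$ converges for every $x\in Y$.
\end{proof}
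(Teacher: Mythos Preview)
Your proof is correct and follows essentially the same approach as the paper's: the key inputs are Ces\`aro boundedness of $T$ via Gomilko--Zem\'anek \cite{GZ} and the growth bound $\|T^n\|=O(\sqrt n)$ from \cite{McC}, after which the standard mean-ergodic machinery (the telescoping identity $M_n(T)(I-T)w=\tfrac1n(w-T^nw)$ on the dense subspace, plus boundedness of $\{M_n(T)\}$) gives the ``if'' direction, and Abelian summation plus Proposition~\ref{abel-conv} gives the ``only if''. The paper's proof is simply a terse one-line version of yours.
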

\begin{proof}
By \cite{McC} $\|T^n\|=O(\sqrt n)$ , and $T$ is Ces\`aro bounded, since
by \cite{GZ} it satisfies the uniform Kreiss resolvent condition; 
hence convergence holds for $x\in F(T) \oplus \overline{(I-T)X}$, which is closed by  
Proposition \ref{abel-conv}.
\end{proof}
\smallskip

El-Fallah and Ransford \cite[Corollary 1.4]{FR} show existence of $T$ Kreiss bounded, with
$\|T^n\|/n \not \to 0$; we show that even strong convergence may fail,
and the usual ergodic averages $M_n(T)x:=\frac1n\sum_{k=0}^{n-1} T^kx$ need not converge 
for $x\in F(T)\oplus \overline{(I-T)X}$.
\smallskip

{\bf Example.} {\it $T$ satisfying the Kreiss resolvent condition and $y \in X$ with $T^ny/n
\not\to 0$.}

We look at Shields's example \cite{Sh}: $X$ is the space of functions $f$, analytic in the open 
unit disk with $f'$ in $H^1$, with norm $\|f\|:=\|f\|_\infty + \|f'\|_1$. The operator is
$Tf(z)=zf(z)$. Shields proved that $T$ satisfies the Kreiss resolvent condition, and observed 
that $\|T^n\| = n+1$ (hence $T$ does not satisfy the strong Kreiss condition). In fact, for 
$y(z) \equiv 1$ we have $\|T^ny\|=1+n$. 

Taking $x=(I-T)y$ we see that the Ces\`aro averages of $x$ do not converge to 0, so $T$ 
restricted to $Y:= F(T)\oplus \overline{(I-T)X} = \overline{(I-T)X} $ is not mean ergodic. 
$\|M_n(T)x\| =\frac1n\|y-T^ny\|$ is bounded, since $\|T^n\|= n+1$.
Note that $T$ on all of $X$ is not Ces\`aro bounded \cite{SW}, hence does not satisfy the 
uniform Kreiss resolvent condition.
 \smallskip

\begin{cor} \label{KnotME}
There exists $S$ on a Banach space $Y$ which satisfies the Kreiss resolvent condition, 
$Y=F(S) \oplus \overline{(I-S)Y}$, but $S$ is not mean ergodic.
\end{cor}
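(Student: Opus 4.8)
The plan is to take the operator $T$ on $X$ from the preceding Example (Shields's operator $Tf(z)=zf(z)$ on the space of analytic $f$ with $f'\in H^1$, normed by $\|f\|=\|f\|_\infty+\|f'\|_1$) and set $Y:=F(T)\oplus\overline{(I-T)X}$ with $S:=T_{|Y}$. Everything needed has essentially been assembled in the Example and in Remark 2 following Theorem \ref{sw-dec}, so the proof is mostly a matter of stitching those observations together. First I would recall that $T$ satisfies the Kreiss resolvent condition \eqref{KRC} (Shields \cite{Sh}) and that by Proposition \ref{abel-conv}(i) the subspace $Y$ is closed; it is plainly $T$-invariant, so $S\in B(Y)$ is well-defined. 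Then I would invoke the characterization \eqref{sw} of the Kreiss resolvent condition via boundedness of the Ces\`aro-$2$ means of the rotates $\gamma T$: since these means for $S$ are just the restrictions to $Y$ of the corresponding means for $T$, boundedness is inherited, so $S$ satisfies \eqref{KRC} on $Y$.

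Next I would verify the decomposition $Y=F(S)\oplus\overline{(I-S)Y}$. One has $F(S)=F(T)$ trivially (fixed points of $T$ lying in $Y$ are exactly $F(T)$, as $F(T)\subset Y$), and I would show $\overline{(I-S)Y}=\overline{(I-T)X}$: the inclusion $\subseteq$ is clear, and for $\supseteq$ one uses that $S$ inherits Abel boundedness from $T$ (Proposition \ref{kreiss-abel}) together with Proposition \ref{abel0} applied to $S$ on the space $Z:=\overline{(I-T)X}$ — exactly the argument already carried out inside the proof of Theorem \ref{sw-dec} to show $Z\subset\overline{(I-T)^2X}$. Hence $Y=F(S)\oplus\overline{(I-S)Y}$ is the required splitting, which is Remark 2 after Theorem \ref{sw-dec}.

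Finally I would show $S$ is not mean ergodic. Pick $y\in X$ with $y(z)\equiv 1$, so $\|T^ny\|=n+1$, and set $x=(I-T)y\in\overline{(I-T)X}\subseteq Y$. Then $M_n(S)x=M_n(T)x=\tfrac1n(y-T^ny)$, whose norm stays bounded (it is $\tfrac1n\|y-T^ny\|\le\tfrac{n+2}{n}$) but does not tend to $0$, since $T^ny/n\not\to 0$ (indeed $\|T^ny/n\|=(n+1)/n\to 1$). Thus $M_n(S)x$ does not converge to $0$, so $S$ is not mean ergodic on $Y$, even though $Y=F(S)\oplus\overline{(I-S)Y}$ and $S$ satisfies the Kreiss resolvent condition. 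The only mildly delicate point — and the one I would state carefully — is the identity $\overline{(I-S)Y}=\overline{(I-T)X}$, i.e. that passing to the invariant subspace $Y$ does not shrink the closure of the range of $I-T$; but this is precisely the $(I-T)^2$-density argument already done in the proof of Theorem \ref{sw-dec}, so no new difficulty arises.
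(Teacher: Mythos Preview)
Your proposal is correct and follows essentially the same route as the paper: the corollary is meant to be read off directly from the preceding Example together with Remark~2 after Theorem~\ref{sw-dec}, and you have faithfully reconstructed that argument (restrict Shields's $T$ to $Y=F(T)\oplus\overline{(I-T)X}$, inherit the Kreiss condition via \eqref{sw}, obtain the splitting for $S$ via Abel boundedness, and use $x=(I-T)y$ with $y\equiv 1$ to witness failure of mean ergodicity). The only cosmetic difference is that the paper invokes Proposition~\ref{abel-conv}(ii) for the decomposition (Abel convergence on $Y$ forces $Y=F(S)\oplus\overline{(I-S)Y}$), whereas you argue the equality $\overline{(I-S)Y}=\overline{(I-T)X}$ directly via Proposition~\ref{abel0}; these are the same mechanism.
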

\medskip

{\bf Remarks.} 1. By Corollary \ref{abel-reflex}, $X$ in the example is not reflexive: 
$F(T) =\{0\}$, but $\overline{(I-T)X} \ne X$, since it contains only functions $g \in X$ 
with $g(1)=0$. 

2. It can be shown that for $T$ of the example $\sigma(T)$ is the closed unit disk, since
 $(\lambda I-T)X \ne X$ when $|\lambda|\le 1$.

3. When $T$ satisfies the Kreiss condition and $\sigma(T)\cap \mathbb T$ has Lebesgue measure 
zero, we have $\|T^n\|/n \to 0$, by \cite[Theorem 5]{Ne} (see also \cite[Corollary 4.6]{AS}).

\medskip

\begin{prop} \label{positive}
The following are equivalent for a positive operator $T$ on a complex Banach lattice $X$.

(i) $T$ is Ces\`aro bounded.

(ii) $T$ is strongly Ces\`aro bounded.

(iii) $T$ is uniformly Kreiss bounded.

(iv) $T$ is Kreiss bounded.

(v) $T$ is Abel bounded.
\end{prop}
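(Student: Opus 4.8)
\emph{Plan.} I will prove the cyclic chain $(i)\Rightarrow(ii)\Rightarrow(iii)\Rightarrow(iv)\Rightarrow(v)\Rightarrow(i)$. Three of the links need no positivity and are already available: $(ii)\Rightarrow(iii)$ is Corollary \ref{scb-uk}; $(iii)\Rightarrow(iv)$ is immediate, since letting $n\to\infty$ in \eqref{UKR} (the partial sums converge in operator norm to $\lambda R(\lambda,T)$ because $r(T)\le 1$) gives \eqref{KRC}; and $(iv)\Rightarrow(v)$ is Proposition \ref{kreiss-abel}. So positivity of $T$ is needed only for $(i)\Rightarrow(ii)$ and $(v)\Rightarrow(i)$. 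In these two steps the only lattice facts used are that $T\ge0$ implies $|T^ky|\le T^k|y|$ for all $y\in X$ and $k\ge0$, that the norm is monotone on the positive cone, and that $\|\,|y|\,\|=\|y\|$.

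\emph{$(i)\Rightarrow(ii)$.} By Proposition \ref{scb1} it suffices to bound $\sup_n\|\tfrac1n\sum_{k=0}^{n-1}\gamma_kT^k\|$ uniformly over all scalar sequences $(\gamma_k)$ with $|\gamma_k|=1$. For $x\in X$, working in the lattice,
\[
\Bigl|\tfrac1n\sum_{k=0}^{n-1}\gamma_kT^kx\Bigr|\le\tfrac1n\sum_{k=0}^{n-1}|T^kx|\le\tfrac1n\sum_{k=0}^{n-1}T^k|x| ,
\]
and the right-hand side is a positive element of norm at most $K\|x\|$, where $K$ is the Cesàro bound of $T$ (the two standard conventions for Cesàro boundedness — averaging $T^0,\dots,T^{n-1}$ or $T^1,\dots,T^n$ — being equivalent). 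Norm monotonicity then gives $\|\tfrac1n\sum_{k=0}^{n-1}\gamma_kT^kx\|\le K\|x\|$, i.e.\ $(ii)$.

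\emph{$(v)\Rightarrow(i)$, the main point.} Abel boundedness entails $r(T)\le1$ (otherwise $A_r(T)$ diverges for $r$ near $1$). Fix $x\ge0$; then $T^kx\ge0$ for every $k$, and for $0<r<1$ and $n\ge2$, using $r^k\ge r^{\,n-1}$ for $0\le k\le n-1$ and positivity,
\[
A_r(T)x=(1-r)\sum_{k\ge0}r^kT^kx\ \ge\ (1-r)\,r^{\,n-1}\sum_{k=0}^{n-1}T^kx\ \ge\ 0 .
\]
By monotonicity of the norm, $\|A_r(T)x\|\ge(1-r)r^{\,n-1}\bigl\|\sum_{k=0}^{n-1}T^kx\bigr\|$. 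Choosing $r=1-1/n$ and using $(1-1/n)^{n-1}\ge e^{-1}$ yields $\bigl\|\tfrac1n\sum_{k=0}^{n-1}T^kx\bigr\|\le e\,C\,\|x\|$ with $C:=\sup_{0<r<1}\|A_r(T)\|$. For arbitrary $x\in X$, the domination $\bigl|\tfrac1n\sum_{k=0}^{n-1}T^kx\bigr|\le\tfrac1n\sum_{k=0}^{n-1}T^k|x|$ and norm monotonicity give $\bigl\|\tfrac1n\sum_{k=0}^{n-1}T^kx\bigr\|\le eC\|x\|$, so $T$ is Cesàro bounded. (Alternatively, apply the classical equivalence of Abel and Cesàro boundedness for nonnegative scalar sequences, as in \cite[1.5--1.7]{Em}, to $\alpha_k=\langle x^*,T^kx\rangle$ for $x\ge0$ and $x^*\ge0$, and use that the positive part of the dual unit ball norms the positive cone of $X$.)

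\emph{Main obstacle.} The only step with any content is $(v)\Rightarrow(i)$: it amounts to lifting the scalar implication ``Abel bounded $\Rightarrow$ Cesàro bounded'' from nonnegative sequences to positive operators on a Banach lattice. This works precisely because $T\ge0$ keeps the partial sums $\sum_{k<n}T^kx$ in the positive cone, so the order inequality above can be promoted to a norm inequality; the substitution $r=1-1/n$ converts the Abel estimate into the Cesàro estimate. Everything else is either quoted verbatim from earlier results or follows from the one-line lattice domination.
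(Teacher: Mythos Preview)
Your proof is correct and follows the same cyclic scheme $(i)\Rightarrow(ii)\Rightarrow(iii)\Rightarrow(iv)\Rightarrow(v)\Rightarrow(i)$ as the paper, with identical arguments for $(i)\Rightarrow(ii)$ (lattice domination plus Proposition~\ref{scb1}) and the intermediate links. The only difference is cosmetic: for $(v)\Rightarrow(i)$ the paper simply cites \'Emilion \cite[I.5]{Em}, whereas you spell out the standard $r=1-1/n$ comparison between Abel and Ces\`aro means on the positive cone---this makes your version more self-contained but is not a different idea.
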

\begin{proof} 
Assume (i). Let $(\gamma_k)_{k\in \N_0}  \subset \TT$. For $x \in X$ positivity yields
$$
\Big\|\ \frac1n\sum_{k=0}^{n-1} \gamma_k T^k x \ \Big\|  =
\Big\|\ |\frac1n\sum_{k=0}^{n-1} \gamma_k T^k x| \ \Big\| \le 
\Big\| \frac1n\sum_{k=0}^{n-1} T^k |x|\ \Big\| \le \|M_n(T)\|\cdot\|x\|,
$$
 which yields (ii), by Propostion \ref{scb1}.

Clearly (ii) implies (iii) and (iii) implies (iv). By Proposition \ref{kreiss-abel}, (iv) implies (v).
(v) implies (i), since for positive operators, Abel boundedness implies Ces\`aro boundedness, 
by \cite[I.5]{Em}.
\end{proof}

\begin{cor}
Let $T$ be a positive operator on a reflexive complex Banach lattice $X$.
If $T$ satisfies the Kreiss resolvent condition, then $T$ is mean ergodic.
\end{cor}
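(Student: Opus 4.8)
The plan is to derive mean ergodicity from the Abel‑theoretic results of this section, using positivity at the one decisive point. By Proposition~\ref{positive}, $T$ is Abel bounded, hence by Corollary~\ref{abel-reflex} it is Abel ergodic; writing $E:=\lim_{r\to1^-}A_r(T)$ for the strong limit, the corollary following Proposition~\ref{abel-conv} gives $X=F(T)\oplus\overline{(I-T)X}$ with $E$ the projection onto $F(T)$. Since each $A_r(T)=(1-r)\sum_{n\ge0}r^nT^n$ is a positive operator, $E\ge0$. Recall (classical mean ergodic theorem) that a Ces\`aro bounded operator for which $X=F(T)\oplus\overline{(I-T)X}$ is mean ergodic as soon as $\tfrac1nT^nx\to0$ in norm for every $x$: on $F(T)$ the averages $M_n(T)$ are stationary, while $M_n(T)(I-T)v=\tfrac1n(v-T^nv)\to0$, so by density and boundedness $M_n(T)\to E$ strongly. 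As $T$ is Ces\`aro bounded by Proposition~\ref{positive}, the whole problem reduces to showing $\tfrac1nT^nx\to0$ in norm for all $x$; and since $|T^nx|\le T^n|x|$ and the norm is a lattice norm, it is enough to treat $x\ge0$.

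I would first get the weak version. Fix $x\ge0$ and a positive functional $\varphi\in X^*$. The scalar sequence $\big(\varphi(T^kx)\big)_k$ is non‑negative, and its Abel means $\varphi\big(A_r(T)x\big)$ converge to $\varphi(Ex)$ as $r\to1^-$; by the equivalence of Abel and Ces\`aro summability for non‑negative sequences (\cite[1.5--1.7]{Em}) the Ces\`aro means $\varphi\big(M_n(T)x\big)$ converge to the same value. As $X$ and $X^*$ are spanned by their positive elements and positive functionals respectively, this yields $M_n(T)x\rightharpoonup Ex$ weakly for every $x$. Applying $I-T$ and using $Ex\in F(T)$ we get $\tfrac1n(x-T^nx)=M_n(T)x-TM_n(T)x\rightharpoonup 0$, so $\tfrac1nT^nx\rightharpoonup 0$ weakly for every $x$.

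The step of upgrading this to norm convergence is where I expect the real work to lie, and it is precisely here that reflexivity (equivalently, order continuity of the norm) together with positivity must be used. For $x\ge0$ and $0<r<1$, positivity gives $0\le(1-r)r^nT^nx\le A_r(T)x$, hence $\|T^nx\|\le\frac1{(1-r)r^n}\|A_r(T)x\|$; choosing $r=n/(n+1)$ gives $\|T^nx\|\le e(n+1)\,\|A_{n/(n+1)}(T)x\|$, and since $A_r(T)x\to Ex$ this already yields $\limsup_n\tfrac1n\|T^nx\|\le e\|Ex\|$. In particular, if $x\ge0$ lies in $\overline{(I-T)X}$ then $Ex=0$, so $\tfrac1nT^nx\to0$ in norm, and trivially $\tfrac1nT^nx\to0$ on $F(T)$; thus $\{x:\tfrac1nT^nx\to0\text{ in norm}\}$ is a closed subspace containing $F(T)$ and the positive cone of $\overline{(I-T)X}$. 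What remains is to show that this forces $\tfrac1nT^nx\to0$ on all of $X$ — the obstruction being that $\overline{(I-T)X}$ need not be a sublattice, so one cannot simply pass to moduli of $x-Ex$. I would try to close this by combining the bound above with the weak‑null property $\tfrac1nT^nx\rightharpoonup0$ and order continuity of the norm; failing a direct argument, one invokes the known fact that a positive Ces\`aro bounded operator on a reflexive Banach lattice satisfies $\tfrac1n\|T^n\|\to0$ (the place where non‑reflexive $L^1$ examples break down). Once $\tfrac1nT^nx\to0$ in norm for all $x$ is established, the mean ergodic theorem together with the decomposition of the first paragraph gives $M_n(T)x\to Ex$, i.e.\ $T$ is mean ergodic.
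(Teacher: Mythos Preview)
The paper's proof is two lines: Proposition~\ref{positive} gives that $T$ is Ces\`aro bounded, and then one simply invokes \'Emilion's theorem \cite[Theorem 4.2]{Em} that a positive Ces\`aro bounded operator on a reflexive Banach lattice is mean ergodic. No further argument is needed.

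Your route is far more elaborate, and the place where you yourself flag trouble is a genuine gap. Your estimate $\limsup_n \tfrac1n\|T^nx\|\le e\|Ex\|$ only yields norm convergence of $\tfrac1nT^nx$ for \emph{positive} $x$ with $Ex=0$; but for a general $x\ge 0$ the component $x-Ex\in\overline{(I-T)X}$ need not be positive, and $\overline{(I-T)X}$ is not a sublattice, so you cannot pass to $|x-Ex|$. The weak convergence $\tfrac1nT^nx\rightharpoonup 0$ together with positivity and order continuity of the norm does \emph{not} force norm convergence (think of the standard unit vectors in $\ell^2$). So your argument, as a self-contained proof, does not close.

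Your proposed escape hatch---``the known fact that a positive Ces\`aro bounded operator on a reflexive Banach lattice satisfies $\tfrac1n\|T^n\|\to0$''---is not an independent lemma: it is precisely (the nontrivial half of) \'Emilion's theorem. Strong convergence $\tfrac1nT^n\to0$ for positive Ces\`aro bounded $T$ on a reflexive lattice is equivalent to \'Emilion's mean ergodicity statement via the standard Yosida--Kakutani argument. So the long detour through Abel ergodicity, the scalar Abel--Ces\`aro equivalence, and the pointwise estimate ends up collapsing back to the very citation the paper makes directly. If you want a proof that genuinely avoids citing \cite{Em}, you would have to reprove \'Emilion's result, which uses dominated ergodic estimates and is not a one-line consequence of what you have set up.
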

\begin{proof} By Proposition \ref{positive}, $T$ is Ces\`aro bounded. Emilion 
\cite[Theorem 4.2]{Em} proved that a Ces\`aro bounded positive
operator on a reflexive Banach lattice is mean ergodic. 
\end{proof}

{\bf Remarks.} 1. Examples of positive operators on $\ell^p(\mathbb N)$ which are 
(uniformly) Kreiss bounded but are not power-bounded, are given in \cite[Theorem 2.1]{BBMP}.
See also Theorem \ref{example}.

2. A positive strongly Ces\`aro bounded operator need not be absolutely Ces\`aro bounded -
the operator $T$ defined in Theorem \ref{example} is positive absolutely Ces\`aro bounded
and not power-bounded, so its dual is SCB but not ACB. See also \cite[Corollary 2.4]{BBMP}.

%
%
%
%

\medskip

\begin{prop} \label{skr-me}
Let $T$ on a reflexive Banach space $X$ satisfy the strong Kreiss resolvent condition 
(\ref{SKR}).  Then $\gamma T$ is mean ergodic for every $\gamma \in \mathbb T$.   
\end{prop}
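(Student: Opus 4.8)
The plan is to use the machinery developed in Section 5 together with the known growth estimate for strongly Kreiss bounded operators. First I note that if $T$ satisfies \eqref{SKR}, then so does $\gamma T$ for every $\gamma \in \mathbb T$, with the same constant: indeed, $R(\lambda, \gamma T) = \gamma^{-1} R(\gamma^{-1}\lambda, T)$, and $|\gamma^{-1}\lambda| = |\lambda|$, so the iterated resolvent bounds \eqref{SKR} are preserved under rotation. Hence it suffices to prove that $T$ itself is mean ergodic, and then apply the conclusion to each $\gamma T$.

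The second step is to recall the two facts available from the excerpt: by \cite{GZ}, \eqref{SKR} implies the uniform Kreiss resolvent condition \eqref{UKR}, hence $T$ is Ces\`aro bounded; and by \cite{McC}, \eqref{SKR} implies $\|T^n\| = O(\sqrt n)$, so in particular $T^n/n \to 0$ in norm. Therefore on the reflexive space $X$, the classical mean ergodic theorem applies: a Ces\`aro bounded operator with $T^nx/n \to 0$ on a reflexive space is mean ergodic, since the decomposition $X = F(T) \oplus \overline{(I-T)X}$ holds (for any $x$, the bounded sequence $M_n(T)x$ has a weakly convergent subsequence, whose limit is fixed by $T$ because $\|M_n(T)(I-T)\| = \|(T^n - I)/n\| \to 0$; standard arguments then give norm convergence). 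Alternatively, since $T$ is Kreiss bounded it is Abel bounded by Proposition \ref{kreiss-abel}, hence Abel ergodic by Corollary \ref{abel-reflex}, and one invokes Corollary \ref{skr-ME} together with reflexivity to pass from Abel ergodicity to mean ergodicity.

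Combining these, each $\gamma T$ (being strongly Kreiss bounded with the same constant) is Ces\`aro bounded with $\|(\gamma T)^n\|/n \to 0$ on the reflexive space $X$, hence mean ergodic. The argument is essentially routine given the cited results; the only point requiring care is the rotation invariance of the strong Kreiss condition, which I would verify explicitly via the resolvent identity above, and the verification that Corollary \ref{skr-ME} (or the direct mean ergodic theorem) indeed yields convergence on all of $X$ when $X$ is reflexive, which follows because the decomposition \eqref{dec} is automatic in the reflexive case from Abel boundedness and Corollary \ref{abel-reflex}. I do not anticipate a genuine obstacle here: the substance was already done in establishing $\|T^n\| = O(\sqrt n)$ and the implication $\eqref{SKR} \Rightarrow \eqref{UKR}$.
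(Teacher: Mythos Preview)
Your proposal is correct and follows essentially the same route as the paper: invoke \cite{GZ} to obtain Ces\`aro boundedness of each $\gamma T$, invoke \cite{McC} (or \cite{LN}) for $\|T^n\|=O(\sqrt n)$ so that $(\gamma T)^n/n\to 0$, and conclude mean ergodicity on the reflexive space. The only cosmetic difference is that the paper gets Ces\`aro boundedness of $\gamma T$ directly from the characterization \eqref{rotated} of UKB (which already quantifies over all $|\gamma|=1$), whereas you first verify that $\gamma T$ inherits \eqref{SKR} and then apply the implication; both are fine.
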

\begin{proof} By \cite{GZ} $T$ is uniformly Kreiss bounded, so 
$\{\|\frac1n \sum_{k=0}^{n-1} (\gamma T)^k\|\}_{n\ge 1}$ is bounded for fixed $\gamma$,
and by  \cite{McC} (or \cite{LN}), $ \|(\gamma T)^n\|=\|T^n\| =O(\sqrt{n})$.
\end{proof}

{\bf Remark.} When $X$ is a Hilbert space, the above holds if $T$ satisfies only
(\ref{UKR}), since
it was proved in \cite[Corollary 2.5]{BBMP} that if $T$ on a Hilbert space satisfies the
uniform Kreiss resolvent condition, then it is mean ergodic.
\medskip

Recall that on a reflexive Banach space, $T$ is mean ergodic if and only if it is Ces\`aro
bounded and $T^n/n$ converges to zero strongly.

\begin{lem} 
Let $T$ on a reflexive Banach space satisfy the uniform Kreiss resolvent condition. Then the
following are equivalent:

(i) $T^n/n \to 0$ in the strong operator topology.

(ii) $\gamma T$ is mean ergodic for every $|\gamma|=1$.
 
(iii) $T$ is mean ergodic.

(iv) $\gamma_0 T$ is mean ergodic for some $|\gamma_0|=1$.


\end{lem}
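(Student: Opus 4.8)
\textit{Proof proposal.} The plan is to reduce everything, via the characterization \eqref{rotated} of uniform Kreiss boundedness together with the recalled criterion ``on a reflexive Banach space, $T$ is mean ergodic if and only if it is Ces\`aro bounded and $T^n/n\to0$ strongly'', to the trivial identity $(\gamma T)^n x/n=\gamma^n T^n x/n$ for $\gamma\in\mathbb T$.

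The only structural observation needed is that for every $\gamma\in\mathbb T$ the operator $\gamma T$ is again uniformly Kreiss bounded, hence Ces\`aro bounded: indeed, replacing $\lambda$ by $\lambda\gamma$ in \eqref{rotated} merely permutes $\mathbb T$, so $\gamma T$ satisfies \eqref{rotated} with the same constant (and Ces\`aro boundedness of $\gamma T$ is literally \eqref{rotated} with $\lambda=\gamma$).

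With this in hand: for (i)$\Rightarrow$(ii), fix $\gamma\in\mathbb T$; then $\gamma T$ is Ces\`aro bounded, and from $\|(\gamma T)^n x\|/n=\|T^n x\|/n\to0$ for every $x$ we get $(\gamma T)^n/n\to0$ strongly, so by reflexivity and the recalled criterion $\gamma T$ is mean ergodic. The implications (ii)$\Rightarrow$(iii) and (iii)$\Rightarrow$(iv) are immediate (take $\gamma=1$, resp. $\gamma_0=1$). For (iv)$\Rightarrow$(i): if $\gamma_0 T$ is mean ergodic then $\frac1n\sum_{k=0}^{n-1}(\gamma_0 T)^k x$ converges for every $x$, and subtracting consecutive Ces\`aro averages shows $(\gamma_0 T)^n x/n\to0$; since $\|(\gamma_0 T)^n x\|/n=\|T^n x\|/n$, this gives $T^n/n\to0$ strongly, which is (i).

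I do not expect a genuine obstacle here; the single point that must be stated with care is the scaling invariance of the uniform Kreiss resolvent condition under multiplication by a unimodular constant, which is transparent from \eqref{rotated} but is not obvious directly from \eqref{UKR}.
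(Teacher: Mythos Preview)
Your proof is correct and follows essentially the same route as the paper's: both use \eqref{rotated} to get Ces\`aro boundedness of $\gamma T$, the identity $\|(\gamma T)^nx\|=\|T^nx\|$ to transfer strong convergence of $T^n/n$, and the standard reflexive-space criterion for mean ergodicity (the paper phrases (i)$\Rightarrow$(ii) via the decomposition \eqref{dec}, but this amounts to the same thing). Your extra remark on the unimodular invariance of the uniform Kreiss condition is a helpful clarification that the paper leaves implicit.
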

\begin{proof} 
(i) implies (ii) by (\ref{rotated})  and the decomposition \ref{dec}.
Trivially, (ii)$\Longrightarrow$(iii)$\Longrightarrow$ (iv). 
(iv) implies (i) since we get $\|T^nx\|/n =\|(\gamma_0 T)^n x\|/n \to 0$ by (iv).
\end{proof}

{\bf Remarks.} 
1. Weak mean ergodicity of uniformly Kreiss bounded $T$ on a reflexive space is equivalent to
the weak convergence to zero of  $T^n/n$, in view of the decomposition \ref{dec}.
Since $T^*$ satisfies (\ref{rotated}) if and only if $T$ does, $T$ is weakly mean ergodic 
if and only if $T^*$ is.
 
2. The above lemma is valid also for the weak topology.

\begin{cor} \label{ukr-me}
Let $T$ on a reflexive Banach space satisfy the uniform Kreiss resolvent condition. 
If the Lebesgue measure of $\sigma(T) \cap \mathbb T$ is zero, then for every 
$\gamma \in\mathbb T$, the operators $\gamma T$ and $\gamma T^*$ are mean ergodic.
\end{cor}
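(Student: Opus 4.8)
The plan is to combine the previous Lemma (which says that for uniformly Kreiss bounded $T$ on a reflexive space, mean ergodicity of $\gamma T$ for every $\gamma\in\mathbb T$ is equivalent to $T^n/n\to 0$ strongly) with a spectral argument that forces $T^n/n\to 0$ whenever $\sigma(T)\cap\mathbb T$ has Lebesgue measure zero. First I would recall that the uniform Kreiss resolvent condition gives $\|T^n\|=o(n)$ by \cite[Theorem 2.3]{BBMP}, and in particular $r(T)\le 1$, so the spectrum lies in the closed unit disk. The key point is Remark 3 after Corollary \ref{KnotME}: when $T$ satisfies the Kreiss condition (which is implied by the uniform Kreiss condition) and $\sigma(T)\cap\mathbb T$ has Lebesgue measure zero, then $\|T^n\|/n\to 0$ by \cite[Theorem 5]{Ne} (see also \cite[Corollary 4.6]{AS}). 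Hence $T^n/n\to 0$ even in operator norm, a fortiori strongly. The Lemma then yields that $\gamma T$ is mean ergodic for every $\gamma\in\mathbb T$.

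For the adjoint $\gamma T^*$, I would observe that $T^*$ on the (reflexive) dual space $X^*$ also satisfies the uniform Kreiss resolvent condition, because $R(\lambda,T^*)=R(\lambda,T)^*$ and $\|\sum_{k=0}^n (T^*)^k/\lambda^{k+1}\|=\|(\sum_{k=0}^n T^k/\lambda^{k+1})^*\|=\|\sum_{k=0}^n T^k/\lambda^{k+1}\|$, so condition \eqref{UKR} passes to $T^*$ with the same constant. Moreover $\sigma(T^*)=\sigma(T)$, so $\sigma(T^*)\cap\mathbb T$ also has Lebesgue measure zero. Applying the first part of the argument to $T^*$ in place of $T$ gives that $\gamma T^*$ is mean ergodic for every $\gamma\in\mathbb T$. (Alternatively, one can use Remark 1 after the Lemma: $T$ is mean ergodic iff $T^*$ is, applied to each $\gamma T$; but the direct argument via $T^*$ is cleaner and self-contained.)

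The main obstacle, such as it is, is simply making sure the cited spectral result \cite[Theorem 5]{Ne} (or \cite[Corollary 4.6]{AS}) applies in the stated generality on a Banach space — i.e., that $\|T^n\|/n\to 0$ really does follow from the Kreiss condition together with $\sigma(T)\cap\mathbb T$ being Lebesgue-null. Granting that (as the paper does in Remark 3 after Corollary \ref{KnotME}), the rest is routine: transporting the uniform Kreiss condition to the dual, noting $\sigma(T^*)=\sigma(T)$, and invoking the Lemma. No estimate in this corollary requires new computation beyond what has already been established.
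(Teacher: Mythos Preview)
Your argument is correct and follows the same route as the paper: use \cite[Theorem 5]{Ne} (Remark 3 after Corollary \ref{KnotME}) to obtain $\|T^n\|/n\to 0$ from the Kreiss condition plus the Lebesgue-null spectral assumption, then apply the preceding Lemma to $T$ and to $T^*$ (which inherits UKR and has the same spectrum). One minor slip: your preliminary appeal to \cite[Theorem 2.3]{BBMP} for $\|T^n\|=o(n)$ is valid only in Hilbert space, not in a general reflexive Banach space --- but you do not actually rely on it, since the Nevanlinna result you invoke next already gives $\|T^n\|/n\to 0$ (and $r(T)\le 1$ is immediate from the Kreiss condition itself).
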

\begin{proof} $T$ and $T^*$ satisfy all the conditions in the lemma, 
since $\|T^n\|/n \to 0$ by \cite{Ne}.
\end{proof}

{\bf Example.} {\it A uniformly Kreiss bounded not power-bounded $T$ on a reflexive space, 
with $\sigma(T)=\{1\}$.}

Let $V$ be the Volterra operator on $L^p[0,1]$, $1<p<\infty$, and for $r>0$ put $T:=I-rV$.
By \cite{MSZ} $T$ is UKB, and for $p\ne 2$ it is not power-bounded and not strongly
Kreiss bounded. $\sigma(T)=\{1\}$ because $\sigma(V)=\{0\}$.  Hence mean ergodicity of $T$
does not follow from Proposition \ref{skr-me}, but it does from Corollary \ref{ukr-me}.
\medskip

It was shown in \cite[Corollary 2.5]{BBMP} that {\it any $T$ on a Hilbert space which 
satisfies the uniform Kreiss resolvent condition is mean ergodic}. Therefore,
if $T$ on $H$ is mean ergodic and $T^*$ is not (e.g. \cite{De}, \cite[Example 3.1]{TZ}), 
then $T$ is not UKB; by Theorem \ref{kreiss-h}, it is not even Kreiss bounded.

\medskip

\section{Problems}

1. A question related to McCarthy's example \cite{McC} is whether in every complex Banach space 
there is a strongly Kreiss bounded operator which is not power-bounded.
\smallskip

2. Let $T$ be invertible. If both $T$ and $T^{-1}$ are absolutely Ces\`aro bounded
(or even both are Ces\`aro square bounded), are $T$ and $T^{-1}$ power-bounded? 
In Hilbert space, if in addition one of them is power-bounded, so is the other, since absolute 
Ces\`aro boundedness implies the Kreiss resolvent condition, and we can apply the result of van 
Casteren \cite{VC1}. Hence our problem in Hilbert space is to weaken the power-boundedness 
assumption on one of them, made in \cite{VC1}, and strengthen the Kreiss resolvent condition on 
the other.
\smallskip

3. Is there an example (preferably in a Hilbert space) of $T$ uniformly Kreiss bounded and not 
strongly Ces\`aro bounded? By Corollary \ref{scb-uk}, in any Banach space every SCB operator is UKB.
\smallskip



4. Let $T$ be uniformly Kreiss bounded on $H$. Is there a rate for $\|T^n\|$ which is better
than the rate obtained for Kreiss bounded operators in Theorem \ref{kreiss-h}? Same question
for $T$ strongly Ces\`aro bounded. There is a gap between the growth rates for Kreiss bounded
operators and for absolutely Ces\`aro bounded ones (in $H$). In view of M\"uller's example 
mentioned in the remarks following
Theorem \ref{kreiss-h}, the question is whether the rate is $\|T^n\|= O(n^{1-\varepsilon})$
(with $\varepsilon>0$ depending on $T$).
\smallskip

5. Does uniform Kreiss boundedness of $T$ on a reflexive space imply mean 
ergodicity? In Hilbert spaces the answer is positive, by \cite[Corollary 2.5]{BBMP}. 
 However, for mean ergodicity we need only that $T^n/n \to 0$ {\it strongly}, 
and even this is not known in general reflexive spaces. 
 What if we stregthen the assumption to strong Ces\`aro boundedness of $T$?
This question is related to Question 3 in \cite{MSZ}: 
Does (UKB) imply a rate of growth of $\|T^n\|$ better than $O(n)$?
For the converse, assume that $T$ on $H$ satisfies $\|T^n\|=o(n)$ and that $\gamma T$ is
mean ergodic for every $|\gamma|=1$; is $T$ uniformly Kreiss bounded?
\smallskip

6. Find $T$ on a reflexive Banach space which satisfies the Kreiss resolvent condition and
is not mean ergodic. Is there an example on a Hilbert space?
Note that the space in Corollary \ref{KnotME} is not reflexive.
\smallskip

7. Let $T$ be Ces\`aro square bounded with $\sigma(T)\cap \mathbb T \subset \{1\}$. Does 
$\|T^n(I-T)\| \to 0$?  If yes, what if $T$ is only (ACB)?  By 
\cite{KT}, the answer is positive for $T$ power-bounded, 
but negative for $T$ only Ces\`aro bounded (Tomilov-Zem\'anek \cite{TZ}, L\'eka \cite{Le}),
or when $T$ satisfies only (\ref{KRC}) (Nevanlinna \cite{Ne}). The question whether (UKB) is 
sufficient was posed by Suciu \cite{Su1}; see also \cite{Ne}, \cite{SZ}, \cite{Su}. 
In the above questions, if $T^n(I-T)$ does not converge to 0 in norm, does it converge strongly
(see \cite[Theorem 3.1(iv)]{SZ})?

\bigskip

{\it Note added in proof.} Loris Arnold has noted that Example 1 in \cite{VC2} gives a negative
answer to Problem 2 above.
\bigskip

{\bf Acknowledgements.} The last author is grateful for the hospitality of the universities of 
Leipzig and of Brest, where parts of this research were carried out.

The second author's interest in the subject was motivated by V. M\"uller's talk at the 2019 
workshop of  the internet seminar on ergodic theorems, co-organized  in Wuppertal 
by the third author.
\bigskip

\end{document}